\newcommand{\R}{\mathbb R}
\DeclareMathOperator\supp{supp}
\DeclareMathOperator{\sech}{sech}
\newtheorem{theorem}{Theorem}[section]
\newtheorem{proposition}[theorem]{Proposition}
\newtheorem{remark}[theorem]{Remark}
\newtheorem{lemma}[theorem]{Lemma}
\newtheorem{definition}[theorem]{Definition}
\numberwithin{equation}{section}
\begin{document}
\title[Unique Continuation]{On unique continuation for non-local dispersive models}
\author{F.  Linares}
\address[F. Linares] {IMPA\\ Estrada Dona Castorina 110, Rio de Janeiro 22460-320, RJ Brazil}
\email{linares@impa.br}

\author{G. Ponce}
\address[G. Ponce]{Department  of Mathematics\\
University of California\\
Santa Barbara, CA 93106\\
USA.}
\email{ponce@math.ucsb.edu}

\keywords{Nonlinear dispersive equation,  propagation of regularity }
\subjclass{Primary: 35Q53. Secondary: 35B05}

\begin{abstract} 

We consider unique continuation properties of solutions to a family of evolution equations. Our interest is mainly on nonlinear non-local models. This class contains the Benjamin-Ono, the Intermediate Long Wave, the Camassa-Holm, the dispersion generalized Benjamin-Ono and non-local Schr\"odinger equations as well as their generalizations. We shall review, discuss, expand, and comment on several results. In addition, we shall state some open questions concerning these results and their techniques.

\end{abstract}

\maketitle

\section{Introduction}

In this manuscript we shall study unique continuation properties (UCP) of solutions to some time evolution equations. We are interested in two types of UCP: local  ones and asymptotic at infinity.

For  local ones we mean the following : if $u_1,\,u_2$ are solutions of the equation which agree in an open set
$\Omega$ (in the space-time space), then $u_1,\,u_2$  agree in their domain of definition.

Roughly, asymptotic at infinity  implies : if $u_1,\,u_2$ are solutions of the equation such that at two different times $t_1,\,t_2$
\begin{equation}
\label{nw}
||| \,u_1(\cdot,t_j)-u_2(\cdot,t_j)|||<\infty,\;\;\;\;j=1,2,
\end{equation}
then $u_1,\,u_2$ are equal in their domain of definition.
Here $|||\cdot|||$ may represent a \it weighted norm \rm or an asymptotic behavior at infinity. 

 For the linear equation one can fix $u_2\equiv 0$. In the nonlinear case, a weaker version of these UCP is obtained by assuming that $u_2\equiv 0$ is the second solution. In this case, for the nonlinear equation and $u_2\equiv 0$, the asymptotic at infinity UCP can be rephrased  in the question : what is the strongest possible decay at two different times of a nontrivial solution ?

The "norm" in \eqref{nw} may depend on the time $t_j,\,j=1,2$ and as we will see in some cases one needs three times to achieve the desired result.

The local UCP can be considered in solutions of the associated initial value problem (IVP), initial boundary value problem and mixed problems. In this work, we shall restrict ourselves to the IVP and to the initial periodic boundary value problem (IPBVP). Also, the asymptotic at infinity UCP will be only  considered here in solutions of the associated IVP.

It is clear that these UCP should be  examined in non-hyperbolic equations. Even in this class, non-hyperbolic models, these UCP may fail.
 For example, it was proved in \cite{RH} that the  (generalized Korteweg-de Vries) equation
\begin{equation}
\label{ggKdV}
\partial_tu+\partial_x^3(u^2)+\partial_x(u^2)=0,\;\;\;t,\,x\in \mathbb R,
\end{equation}
possesses  traveling waves solution $u(x,t)=\phi_c(x-ct)$ with $c>0$, called \it compacton\rm, of the form
\begin{equation}
\label{compacton}
\phi_c(\xi)=
\begin{cases}
\begin{aligned}
&\; \frac{4 c\,\cos^2(\xi/4)}{3},\;\;\;\;\;\;& |\xi|\leq 2\pi,\\
&\;0,\;\;\;\;\;\;\;&|\xi|>2\pi.
\end{aligned}\end{cases}
\end{equation}
We observe that $\phi_c(x-ct)$ is a classical solution of \eqref{ggKdV} with compact support. Thus, taking $u_1(x,t)=\phi_c(x-ct)$ and $u_2(x,t)=\phi_c(x-ct+4\pi)$ one gets a counterexample for our UCP's above.

Also, in \cite{Bar} it was proved that the porous medium equation
\begin{equation}
\label{porous}
\partial_tu=\partial_x^2(u^{1+m}),\;\;\;t>0,\,\,x\in \mathbb R,\;\;m>0,
\end{equation}
possesses non-negative compact support (generalized) solutions.

Although our main interest  here are non-local nonlinear dispersive problems, we shall first review the known UCP results for the Korteweg-de Vries equation. This will allow us to illustrate the difference with the non-local case. Next, we shall consider   the Benjamin-Ono equation (section 3),  the Intermediate Long Wave equation (section 4), the Camassa-Holm equation (section 4) and related models, the  dispersion generalized Benjamin-Ono equation (section 5), and  the general non-local Schr\"odinger equation (section 6).

We observe that the Korteweg-de Vries equation as well as  the equations  in sections 2-4  are completely integrable models, and the  models in sections 3-6  are non-local ones. Except for those in section 6, all the models are one dimensional.

As we shall see below the local UCP results for these nonlocal models are based in stationary arguments. Some of them are classic ones other have been proven recently and are interesting in their own right.
\section{The Korteweg-de Vries equation}

In this section we shall study the UCP on solutions to the Korteweg-de Vries (KdV) equation.  The KdV equation was first derived as a model of  propagation of waves on shallow water surfaces \cite{KdV}.  The KdV equation was the first equation to be solvable by means of the inverse scattering transform \cite{GGKM}. As it was already mentioned we shall use it to illustrate the difference with the non-local case. Thus,  we  shall examine the associated IVP, i.e.
\begin{equation}
\label{KdV}
\begin{cases}
\begin{aligned}
&\partial_tu+\partial_x^3u+u\partial_xu=0,\;\;\;t,\,x\in \mathbb R,\\
& u(x,0)=u_0(x),
\end{aligned}
\end{cases}
\end{equation}
and that for   its $k$-generalized form (k-gKdV)
\begin{equation}
\label{kgKdV}
\partial_t u + \partial_x^3 u + u^k\partial_x u=0,\;\;t,\,x\in \mathbb R,\;\;k\in\mathbb Z^+,
\end{equation}
for which only the cases $k=1,2$ are completely integrable.

The IVP and IPBVP (as well as other mixed problems) associated to the k-gKdV \eqref{kgKdV} have been extensively analyzed. In particular, their local and global well-posedness, the asymptotic behavior of their solutions, the stability of their special solutions (traveling waves and breathers) among other related topics have attracted great amount of attention, see \cite{LiPo}, \cite{Tao1}, \cite{KiVi} and references therein. In particular, we recall that the equation \eqref{kgKdV} has traveling wave solutions $u(x,t)=\phi_k(x-t)$ of the form
\begin{equation}
\label{twKdV}
\phi_k(x)=c_k\,\sech^{2/k}\Big(\frac{k\,x}{2}\Big)
\end{equation}
which belongs to the Schwartz class $\,\mathcal S(\mathbb R)$.

Concerning the local UCP for solution of \eqref{kgKdV} we observe : if  $\,u\,$  is a solution of the k-gKdV \eqref{kgKdV} in the domain $(x,t)\in \mathbb R\times [0,T]$ such  that $u(x,t)=0$ for all $(x,t)\in \Omega $ open set in $\mathbb R\times [0,T]$ with $[a,b]\times[t_1,t_2]\subset \Omega$, $ \,a<b$ and $0<t_1<t_2<T$, then the functions
\begin{equation}
\label{local1}
v_1(x,t)=
\begin{cases}
\begin{aligned}
&u(x,t),\;\;\;\;\;\;&(x,t)\in (-\infty,a)\times(t_1,t_2),\\
&\;0,\;\;\;\;\;\;&(x,t)\in [a,\infty)\times (t_1,t_2),
\end{aligned}
\end{cases}
\end{equation}
and
\begin{equation}
\label{local2}
v_2(x,t)=
\begin{cases}
\begin{aligned}
&\,0,\;\;\;\;\;\;\;&(x,t)\in (-\infty,b)\times(t_1,t_2),\\
&u(x,t),\;\;\;\;\;&(x,t)\in [b,\infty)\times (t_1,t_2),
\end{aligned}
\end{cases}
\end{equation}
are also solutions of the k-gKdV equation in the domain $\mathbb R\times(t_1,t_2)$. Similar result applies to the case of the difference $u_1-u_2$ of two solutions $u_1,\,u_2$ of the k-gKdV equation. 

Thus, in the k-gKdV (and in any local model) the local and asymptotic at infinity UCP are related.  This is not the case when one considers a 
 non-local model for which the argument  in \eqref{local1}-\eqref{local2} does not apply.

In the same regard, as we will see below, in a non-local model the hypothesis in the  local UCP can be replaced by the weaker one : 

if $u_1, u_2$ are solutions of the non-local equation such that exist a time $t_1$ and an open set $I$ in 
the space variable, such that
\begin{equation}
\label{con1}
\begin{cases}
\begin{aligned}
&\;\;\text{(i)}\hskip4pt\; \;\;\;\;u_1(x,t_1)=u_2(x,t_1),\,\,\,\,\,\;\;\;\;x\in I,\\
&\;\;\text{(ii)} \;\;\partial_tu_1(x,t_1)=\partial_tu_2(x,t_1),\;\;\;\;x\in I,
\end{aligned}
\end{cases}
\end{equation}
then $u_1\equiv u_2$.

This reflects the stationary character of the arguments used in  their proofs.

One has that for the KdV equation (or any local model) the condition  (i)  in \eqref{con1} combined with the equation implies that in  (ii). In fact,  under the hypothesis \eqref{con1} the local UCP  fails for the KdV 
(or any local model) by just taken $t_1=0$ and appropriate initial data $u_1(x,0)$ and $u_2(x,0)$.

Concerning the local UCP for the KdV equation we have the following result obtained in \cite{SaSc}.

\begin{theorem} [\cite{SaSc}]\label{87}
 If $u_1,\,u_2$ are "solutions" of the k-gKdV equation in $\mathbb R\times[0,T]$ such that  $u_1=u_2$ in 
an open  set  $\Omega \subset \mathbb R\times[0,T]$, then $u_1\equiv u_2$ in $\mathbb R\times[0,T]$.

\end{theorem}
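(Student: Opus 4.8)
The plan is to reduce the local UCP for the k-gKdV to a Carleman-estimate / unique continuation statement for the linear operator $L = \partial_t + \partial_x^3$, treating the nonlinear term $u^k\partial_x u$ as a lower-order perturbation. First I would set $w = u_1 - u_2$; since $u_1, u_2$ both solve \eqref{kgKdV}, $w$ satisfies an equation of the form
\begin{equation*}
\partial_t w + \partial_x^3 w + a(x,t)\,\partial_x w + b(x,t)\, w = 0,
\end{equation*}
where $a = \tfrac{1}{k+1}\sum_{j=0}^{k} u_1^{\,j} u_2^{\,k-j}$ and $b = (\partial_x u_2)\cdot(\text{a polynomial in }u_1,u_2)$, using the algebraic identity $u_1^k\partial_x u_1 - u_2^k \partial_x u_2 = \partial_x\big(\tfrac{u_1^{k+1}-u_2^{k+1}}{k+1}\big)$ and then distributing the $\partial_x$. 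The point is that $a, b$ are bounded (with bounded derivatives, given the regularity implicit in the quotes around "solutions") on the relevant compact region, and $w$ vanishes on the open set $\Omega$. So it suffices to prove: a sufficiently regular solution of the above linear variable-coefficient third-order equation that vanishes on an open set vanishes identically on $\mathbb R\times[0,T]$.

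The key steps, in order, would be: (1) derive the linear equation for $w$ and record the needed bounds on $a,b$; (2) prove a \emph{local} unique continuation result — if $w$ vanishes on an open set $\Omega$ then it vanishes on a full horizontal strip $\mathbb R\times(t_1,t_2)$ containing a sub-box of $\Omega$ — by invoking the Saut–Scheurer machinery: a weighted $L^2$ (Carleman) estimate for $\partial_t+\partial_x^3$ with weights of the form $e^{\lambda\varphi}$ where $\varphi$ is chosen convex in a suitable sense, which absorbs the first- and zeroth-order terms $a\partial_x w + bw$ for $\lambda$ large; this is exactly where the dispersive (non-characteristic) structure of KdV is used and is the engine of the argument; (3) propagate from the strip to all of $\mathbb R\times[0,T]$: once $w\equiv 0$ on a strip, one uses the backward/forward well-posedness (or a second application of the Carleman estimate across time) to conclude $w(\cdot,t_1)\equiv 0$ for the slice, hence $u_1(\cdot,t_1)=u_2(\cdot,t_1)$, and then uniqueness for the IVP of the k-gKdV gives $u_1\equiv u_2$ on the whole time interval. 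Alternatively, and more in the spirit of the paper's later remarks, step (3) can exploit the local structure \eqref{local1}--\eqref{local2}: the truncated functions $v_1,v_2$ are again solutions, so one is reduced to a compactly-supported (in $x$) solution on a strip, for which the Carleman estimate forces it to be zero.

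The main obstacle I anticipate is step (2): constructing a Carleman weight that works for the \emph{odd}-order, non-elliptic operator $\partial_t+\partial_x^3$ and simultaneously controls a full first-order term $a\,\partial_x w$. Unlike second-order elliptic or parabolic cases, the symbol $i\tau + (i\xi)^3 = i(\tau - \xi^3)$ is purely imaginary on the reals, so the pseudo-convexity condition must be checked carefully and the weight typically has to be linear (or affine) in $t$ and tuned to the group velocity; the first-order coefficient $a$ then imposes a largeness/size constraint that one handles by taking the Carleman parameter $\lambda$ large and possibly localizing in a thin time-slab. A secondary technical point is making precise what "solutions" means in quotation marks — i.e. specifying the minimal Sobolev regularity on $u_1,u_2$ under which the commutator terms and the integration by parts in the Carleman estimate are justified — but this is bookkeeping rather than a conceptual hurdle, and for classical or $H^s$-with-$s$-large solutions it is routine.
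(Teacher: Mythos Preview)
Your proposal is essentially correct and matches what the paper says about the proof: the paper does not supply its own argument for Theorem~\ref{87} but cites \cite{SaSc} and records in Remark~\ref{a1} that the proof is a consequence of a general Carleman estimate for the local operator $\partial_t+\partial_x^3$, applicable to a broad class of solutions of local evolution equations. Your reduction to a linear equation for $w=u_1-u_2$ with bounded variable coefficients, followed by a Carleman estimate for $\partial_t+\partial_x^3$ absorbing the lower-order terms, is precisely the Saut--Scheurer strategy the paper is pointing to; the minor slip in the explicit form of $a$ and $b$ (you have mixed two valid decompositions) is inconsequential since all that is used is boundedness of the coefficients.
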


 \begin{remark} \label{a1}\hskip10pt
 
  \begin{enumerate}
 \item Although the precise definition is not relevant for our discussion here, the type of solutions considered in \cite{SaSc} is quite general.

 \vspace{2mm}
 
\item The proof of Theorem \ref{87} in \cite{SaSc} is a consequence of a general Carleman estimate deduced there. It applies to a very general class of solutions to  local time evolution equation. The local character of the equation is essential in the argument of the proof.

\end{enumerate}
  \end{remark}

  Next, we consider the asymptotic at infinity UCP for the KdV equation and the k-gKdV equation. In this regard we have: 
  
  \begin{theorem}[\cite{Zh}]\label{12}
  
  If $u_1\in C(\mathbb R: H^4(\mathbb R))$  is a solution  of   the KdV equation \eqref{KdV}
  such that 
$$
u_1(x,t)=0,\;\;\;\;(x,t)\in(b,\infty)\times \{t_1,t_2\},\;\;\,b\in\mathbb R,\,t_1,\,t_2\in(0,T)
$$
(or in $\,(-\infty,b)\times \{t_1, t_2\}) ,\,b\in\mathbb R$), 
then $\;u_1\equiv 0$.
\end{theorem}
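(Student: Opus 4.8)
The plan is to exploit the dispersive smoothing of the KdV flow together with an analyticity-type argument in the spatial variable, reducing the two-times vanishing hypothesis to a single-time strong-decay statement to which known unique continuation results apply. First I would recall that solutions of KdV with data vanishing on a half-line $(b,\infty)$ instantly gain exponential decay and, in fact, analyticity in $x$ for $t>0$; more precisely, the persistence and smoothing theory for KdV shows that if $u_1(\cdot,t_1)$ vanishes on $(b,\infty)$ then $e^{\lambda x}u_1(\cdot,t)$ stays bounded in $L^2$ for $t$ slightly larger than $t_1$ and for every $\lambda>0$, because the Airy group $e^{-t\partial_x^3}$ maps functions supported in $(-\infty,b)$ into functions with rapid (indeed super-exponential, like $e^{-cx^{3/2}}$) decay as $x\to+\infty$. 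Thus, the vanishing at the single time $t_1$ on a half-line already forces arbitrarily fast decay at all slightly later times.

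The second step is to propagate this decay forward to the interval $[t_1,t_2]$ and then bring in the hypothesis at $t_2$. Combining the super-exponential decay produced from $t_1$ with the fact that at $t_2$ the solution again vanishes on $(b,\infty)$, one is in a position to apply the sharp two-times decay results of Escauriaza–Kenig–Ponce–Vega (or the earlier Zhang-type lemma underlying this theorem): a solution of KdV that at two distinct times decays faster than $e^{-c x^{3/2}}$ on a half-line — here, in fact, it vanishes identically there — must be identically zero. Concretely, I would set up the weighted energy estimates for $u_1$ with weights $e^{\beta x}$, use the equation to control $\frac{d}{dt}\|e^{\beta x}u_1(t)\|_{L^2}$, iterate to push $\beta\to\infty$, and conclude that $u_1(\cdot,t)$ is compactly supported to the left for an interval of times; a classical spatial unique continuation argument (e.g. via the Carleman estimate of Theorem \ref{87}, now legitimately applicable since we have reduced to genuine vanishing on an open space-time set, or directly via the fact that a compactly supported $H^4$ solution of KdV must be trivial) then finishes the proof. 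The symmetric case of vanishing on $(-\infty,b)$ is handled by the reflection $x\mapsto -x$, $t\mapsto -t$, under which KdV is invariant.

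The main obstacle I anticipate is making the passage from "vanishing on a half-line at $t_1$" to "quantitative super-exponential decay on a half-line at times $t\in(t_1,t_2]$" fully rigorous at the $H^4$ regularity level assumed in the statement, rather than for Schwartz data. One must justify that the weighted norms $\|e^{\beta x^{3/2}}u_1(\cdot,t)\|$ are finite and propagate; this requires either a careful approximation argument (truncating the weight, deriving estimates uniform in the truncation, then passing to the limit) or invoking the existing persistence-of-decay theory for KdV in the appropriate function spaces. Once that quantitative decay is in hand at two times, the contradiction with a nontrivial solution is the "soft" part, following the established lower-bound estimates. A secondary technical point is that the hypotheses only give vanishing at two interior times $t_1,t_2\in(0,T)$ with no control at $t=0$; but since the smoothing argument only uses the flow on $[t_1,t_2]$, this causes no difficulty — one simply works on that subinterval and never needs the initial data.
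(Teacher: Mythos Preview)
Your route is genuinely different from Zhang's: as the paper notes in Remark~\ref{A2}(1), the proof in \cite{Zh} is based on the inverse scattering transform and the complete integrability of KdV, whereas you propose a PDE argument via dispersive smoothing, Kato-type weighted estimates, and the later machinery of Escauriaza--Kenig--Ponce--Vega or Saut--Scheurer. The PDE route has the conceptual advantage of being, in principle, adaptable to the non-integrable $k$-gKdV equations, which the IST proof is not.

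There are, however, two concrete gaps in what you wrote. First, Theorem~\ref{14} is stated for solutions in $C([0,1]:H^4\cap L^2(|x|^2dx))$, a \emph{two-sided} weighted class, while Theorem~\ref{12} assumes only $u_1\in C(\mathbb R:H^4)$; one-sided vanishing on $(b,\infty)$ together with forward propagation of $e^{\beta x}$ weights gives you no control whatsoever on the left, so you cannot invoke Theorem~\ref{14} as a black box --- you would have to reopen the EKPV Carleman argument and verify that only the one-sided weight $x_+^{3/2}$ is actually needed. Second, your alternative branch (``iterate to push $\beta\to\infty$, conclude compact support, apply Saut--Scheurer'') fails as stated: having $e^{\beta x}u\in L^2$ for every $\beta>0$ yields super-exponential decay but \emph{not} compact support (take $u(x)=e^{-x^2}$ on the right), so you never produce an open space-time set of vanishing and Theorem~\ref{87} is inapplicable. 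A cleaner realization of your own opening sentence does work: vanishing on $(b,\infty)$ at $t_1$ places $u_1(\cdot,t_1)$ in $L^2(e^{\delta x_+^{1/2}}dx)$ for every $\delta>0$, so by the one-sided analyticity result of Rybkin (Remark~\ref{A3}(2)) the map $x\mapsto u_1(x,t)$ is real-analytic for each $t>t_1$; at $t=t_2$ it is then analytic and vanishes on $(b,\infty)$, hence $u_1(\cdot,t_2)\equiv 0$, and uniqueness for the IVP finishes. Be aware, though, that the two-sided version of that analyticity (Theorem~\ref{13}) is itself proved via inverse scattering, so depending on which reference you lean on you may be circling back to Zhang's method in disguise.
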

  
  \begin{remark} \label{A2}\hskip10pt
  \begin{enumerate}
 
 \item 
 The proof of Theorem \ref{12} is based on the inverse scattering method, using the fact that the KdV is a completely integrable model. It cannot be applied to the difference of any two solutions $u_1,\,u_2$, since it requires to have $u_2\equiv 0$. Moreover, it only applies to integrable cases.
  
\vspace{2mm}
\item
  Under the mixed assumption
$$
\hskip30pt u_1(x,t)=0,\,(x,t)\in(a,\infty)\times \{t_1\}\cup (-\infty, b)\times\{t_2\},\;a,\,b\in\mathbb R,
$$  
the  result $u_1\equiv 0$ is  \underline{only known} for the KdV equation ($k=1$ in \eqref{kgKdV}), in the case  $t_1<t_2$, see Remark \ref{A3} (2) below.

 \end{enumerate}
  \end{remark}

  The following result was established in \cite{Ta}:
  \begin{theorem}[\cite{Ta}]\label{13}
  
  If $u_1$ is the solution  of   the IVP associated to the KdV equation \eqref{KdV} with data $u_1(x,0)=u_0\in L^2(e^{\delta |x|^{1/2}}dx)$,$\,\delta>0$, then 
$u_1(x,t)$ becomes analytic in  $x$ 
for each  $\,t\neq 0$.
\end{theorem}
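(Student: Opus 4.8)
The statement is an instance of the \emph{analytic smoothing effect}: the spatial decay $e^{\delta|x|^{1/2}}$ of $u_0$ is converted, by the third-order dispersion, into analyticity of $u_1(\cdot,t)$ for every $t\neq 0$, and the exponent $1/2$ is the critical threshold for this. Indeed, $u_0\in L^2(e^{\delta|x|^{1/2}}dx)$ is equivalent, up to adjusting $\delta$, to the ``Gevrey-$2$'' moment bounds $\|x^N u_0\|_{L^2}\le A_0\,A_1^{\,N}\,(2N)!$ for all $N\in\mathbb Z^+$; and the generator $\Gamma(t):=x-3t\partial_x^2$ of the Airy flow (which commutes with $\partial_t+\partial_x^3$) trades, at a fixed time $t\neq 0$, one power of $x$ for two $x$-derivatives, so moments of order $(2N)!$ of the datum produce bounds of order $(2N)!$ on $\partial_x^{2N}u_1(\cdot,t)$, i.e.\ analyticity with radius $\gtrsim\delta\,|t|^{1/2}$. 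For a weight $e^{\delta|x|^{\sigma}}$ with $\sigma<1/2$ the same bookkeeping only yields Gevrey regularity.

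\emph{Linear step.} Let $U(t)$ be the Airy group, $U(t)u_0=K_t*u_0$ with (up to normalization) $K_t(x)=(3t)^{-1/3}\operatorname{Ai}\big(x/(3t)^{1/3}\big)$. Since $\operatorname{Ai}$ is entire, $K_t$ extends to an entire function of $x$, and one continues $x\mapsto x+i\eta$ in $U(t)u_0(x)=\int K_t(x-y)u_0(y)\,dy$. The only obstruction to absolute convergence comes from $y\to+\infty$, where $x-y\to-\infty$ and the asymptotics of $\operatorname{Ai}$ off the real axis (including the Stokes behaviour near the negative axis) give $|K_t(x+i\eta-y)|\lesssim|x-y|^{-1/4}\exp\big(c\,|\eta|\,|x-y|^{1/2}/|t|^{1/2}\big)$; this growth is absorbed by $|u_0(y)|$ carrying the decay $e^{-\delta|y|^{1/2}/2}$ (in the weighted-$L^2$ sense, by Cauchy--Schwarz) as soon as $c\,|\eta|/|t|^{1/2}<\delta/2$. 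Hence $U(t)u_0$ extends holomorphically to the strip $\{\,|\operatorname{Im}z|<c'\delta|t|^{1/2}\,\}$, and is in particular real-analytic for all $t\neq 0$ (the case $t<0$ is identical, the weight being even). Note that this extension fails to be $L^2$ on horizontal lines, consistently with the fact that the linear KdV flow does not gain Sobolev regularity.

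\emph{Nonlinear step.} From Duhamel's formula $u_1(t)=U(t)u_0-\tfrac12\int_0^t U(t-s)\,\partial_x(u_1^2)(s)\,ds$, the first term is analytic in a strip of half-width $\sim\delta|t|^{1/2}$ by the linear step. For the integral term one first proves \emph{persistence of the weight}: combining a weighted energy estimate with the well-posedness of \eqref{KdV} in $H^s$ and with the local smoothing effect, one obtains $\sup_{0\le s\le T}\|u_1(\cdot,s)\|_{L^2(e^{\delta'|x|^{1/2}}dx)}<\infty$ for some $\delta'\in(0,\delta]$, and that $u_1(\cdot,s)$ is smooth for $s>0$; hence $\partial_x(u_1^2)(s)$ again carries the decay $e^{-\delta'|y|^{1/2}/2}$. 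One then solves for $u_1$ by a fixed-point argument in a scale of Banach spaces of functions holomorphic in a strip of \emph{time-dependent} half-width $a(t)$, taken to degenerate like $c\,\delta'\,|t|^{1/2}$ at $t=0$: contour shifting shows that $U(t-s)$ sends a function analytic in the width-$a(s)$ strip (with the weighted bounds furnished by persistence) to one analytic in the width-$\big(a(s)+c\,\delta'|t-s|^{1/2}\big)$ strip, and since $|t|^{1/2}-|s|^{1/2}\le|t-s|^{1/2}$ the choice $a(t)=c\,\delta'|t|^{1/2}$ is reproduced by the Duhamel map, while the loss of one derivative in $\partial_x(u_1^2)$ is compensated by the $|t-s|^{-1/3}$ gain of the smoothing, which is time-integrable; the map thus contracts for $T$ small and one continues by means of the conservation laws.

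The main difficulty lies in the nonlinear step: the persistence of the \emph{critical} weight $e^{\delta|x|^{1/2}}$ rests on a delicate weighted energy estimate (and may cost a reduction of $\delta$), and, above all, one must control the analyticity width along the Duhamel iteration near $s=t$, where the smoothing of $U(t-s)$ degenerates; it is the precise matching of the width function $a(t)$ with the smoothing estimates for the Airy group that makes the scheme close.
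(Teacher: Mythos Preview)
The paper does not supply its own proof of this theorem; it cites \cite{Ta} and records in the remark immediately following that Tarama's argument is based on the inverse scattering transform---which is why the result is confined to the integrable case and why the analogous statement for the $k$-gKdV equation with $k\neq 1$ is listed there as open.

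Your route is entirely different: a direct PDE argument via the analytic smoothing of the Airy group, the commuting operator $\Gamma(t)=x-3t\partial_x^2$, persistence of the weight, and a fixed-point iteration in time-dependent holomorphic strips. The linear step is essentially correct and nicely explains why the exponent $1/2$ is the natural threshold. The nonlinear step, however, is only a heuristic. You invoke a ``$|t-s|^{-1/3}$ gain of the smoothing'' to absorb the derivative in $\partial_x(u_1^2)$, but the Airy group is unitary on every $H^s$ and has no such global gain; the genuine Kato local smoothing (half a derivative in $L^2_tL^2_{x,\mathrm{loc}}$) does not obviously mesh with your strip-width bookkeeping, and you give no norm in which the contraction actually closes. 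More tellingly, nothing in your scheme distinguishes $u\partial_xu$ from $u^k\partial_xu$: if the argument closed as written it would carry over verbatim to every $k$, contradicting the paper's explicit remark that the result is unknown for $k\neq 1$. That is strong evidence that the iteration in analytic strips at the critical weight $e^{\delta|x|^{1/2}}$ does not close by soft PDE methods alone, and that Tarama's use of complete integrability is doing essential work. Your sketch is a good motivation for the theorem, but it is not a proof, and it is not the method the paper attributes to \cite{Ta}.
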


 \begin{remark} \label{A3} \hskip10pt
 
 \begin{enumerate}
 \item
 The proof of Theorem \ref{13} is based on the inverse scattering method.  In particular, it needs to assume that $u_2\equiv 0$. Similar, result is \underline{unknown} for  any nonlinearities $k\neq 1$ of the k-gKdV equation \eqref{kgKdV}.
 \vspace{2mm}
\item 
  In \cite{Ry} Theorem \ref{13} was obtained for $t>0$ under the decay assumption restricted  to $x>0$.
   \vspace{2mm}
 \item
 The fact that the decay of the data generates a gain of regularity on the solution was previously observed and studied in \cite{Ka}. 
 \end{enumerate}
\end{remark}

 The asymptotic at infinity UCP question  \eqref{nw} was answered in  \cite{EKPV07}:
  
 \begin{theorem}[\cite{EKPV07}]\label{14}
 There exists an universal constant
$$
 c_0=c_0(k)>0
 $$
such that if $u_1,\,u_2\in C([0,1]:H^4(\mathbb  R)\cap L^2(|x|^2dx))$ are
 solutions of  the k-gKdV equation \eqref{kgKdV}
satisfying 
\begin{equation}
\label{3/2}
 u_1(\cdot,0)-u_2(\cdot,0),\,\;\, u_1(\cdot,1)-u_2(\cdot,1)\in L^2(e^{c_0x_{+}^{3/2}}dx),
\end{equation}
then $u_1\equiv u_2$.
 \end{theorem}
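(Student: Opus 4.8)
The plan is to follow the Escauriaza--Kenig--Ponce--Vega scheme: play an upper bound (\emph{persistence of Gaussian--type decay}) against a Carleman--type lower bound that is incompatible with it unless the solution in question is trivial. Set $w:=u_1-u_2$. Using $u_1^k\partial_xu_1-u_2^k\partial_xu_2=\tfrac1{k+1}\partial_x(b\,w)$ with $b:=\sum_{j=0}^{k}u_1^{\,j}u_2^{\,k-j}$, one sees that $w$ solves the linear variable--coefficient equation
\begin{equation*}
\partial_tw+\partial_x^3w+\tfrac1{k+1}\,\partial_x(b\,w)=0,\qquad (x,t)\in\mathbb R\times[0,1].
\end{equation*}
Because $u_1,u_2\in C([0,1]:H^4(\mathbb R)\cap L^2(|x|^2\,dx))$, the coefficient $b$ is bounded on $\mathbb R\times[0,1]$ together with the derivatives that will be needed, and it decays as $|x|\to\infty$. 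Hypothesis \eqref{3/2} becomes $w(\cdot,0),w(\cdot,1)\in L^2(e^{c_0x_+^{3/2}}\,dx)$, and the goal is $w\equiv0$. Suppose, for contradiction, that $w\not\equiv0$; since $w$ solves a linear equation for which the Cauchy problem is uniquely solvable, $w(\cdot,\tfrac12)\not\equiv0$, so $\|w(\cdot,\tfrac12)\|_{L^2(|x|<\rho)}\ge\delta$ for some $\rho,\delta>0$.

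\smallskip

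\noindent\textbf{Step 1 (upper bound: persistence).} First I would prove that there is $\beta=\beta(c_0)>0$ with
\begin{equation*}
\sup_{0\le t\le1}\big\|e^{\beta x_+^{3/2}}w(\cdot,t)\big\|_{L^2_x}<\infty .
\end{equation*}
The mechanism is a weighted energy estimate with the truncated weights $\varphi_N(x)\approx\min\{e^{\beta\langle x\rangle^{3/2}},N\}$. Differentiating $\int\varphi_N^2w^2\,dx$ and using the equation, the dangerous terms involve the ratios $(\varphi_N')^3/\varphi_N^2$, $\varphi_N'''/\varphi_N$ and the contribution of $\partial_x(bw)$; all of these stay bounded precisely because the prescribed growth rate is $e^{\beta x^{3/2}}$ --- the power $3/2$ is the one for which a cube compensates three $x$--derivatives. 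Passing $N\to\infty$ by Fatou and inserting the resulting differential inequality into a logarithmic--convexity (three--lines) argument converts finiteness at the two endpoints into the uniform bound on $[0,1]$; this is also what makes the integrations by parts in Step 2 legitimate.

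\smallskip

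\noindent\textbf{Step 2 (lower bound and conclusion).} Next I would invoke the principal Carleman estimate for $\partial_t+\partial_x^3$, carried out with a one--parameter family of exponential weights $e^{\lambda\phi_R}$ in which $\phi_R$ has homogeneity $3/2$ in $x$ at spatial scale $R$ (with an interior time factor of the type $t(1-t)$). Applied to $w$, with the term $\partial_x(bw)$ absorbed into the left side because $b,\partial_xb$ are bounded and the weight gains powers of the large parameter, it forces a nontrivial solution not to decay faster than $e^{-\gamma_0 x_+^{3/2}}$ at the middle time, where $\gamma_0$ is a fixed constant depending only on the available bounds for $b$ (hence ultimately only on $k$). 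Concretely: using the Step~1 bound to control the right--hand side (the boundary contributions at $t=0,1$ are finite by \eqref{3/2} as long as $\lambda\phi_R\lesssim\beta x_+^{3/2}$ there) and bounding the left--hand side below by $\delta$ on $\{|x|<\rho\}$, one arrives at an inequality of the shape $\delta\,e^{\gamma_0 R^{3/2}}\le C$ for all large $R$ once $c_0$ is taken to be a suitable universal multiple of $\gamma_0$; letting $R\to\infty$ forces $\delta=0$, a contradiction. Hence $w\equiv0$, i.e.\ $u_1\equiv u_2$.

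\smallskip

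\noindent\textbf{Main obstacle.} The delicate point is the \emph{matching of exponents and of constants}: $3/2$ is exactly the borderline homogeneity at which the persistence estimate of Step~1 still holds \emph{and} the Carleman weight of Step~2 still yields a genuine gain in the large parameter, so the two must be played off against each other at precisely this critical homogeneity, and the constants must be tracked so that $c_0$ depends only on $k$ --- through the size of $b$, which is where the $H^4\cap L^2(|x|^2dx)$ hypothesis really enters --- rather than on the individual solutions. Making the two estimates recognize one and the same family of admissible weights $e^{\lambda\phi_R}\lesssim e^{\beta x_+^{3/2}}$, uniformly in the scaling parameter $R$, is the technical heart of the argument.
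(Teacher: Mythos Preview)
The paper itself does not supply a proof of this theorem; it is stated as a quotation from \cite{EKPV07}, and the only commentary offered is the remark that the Airy function decay $|Ai(x)|\le c(1+x_-)^{-1/4}e^{-cx_+^{3/2}}$ explains the exponent $3/2$. There is therefore no ``paper's own proof'' to compare against.

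That said, your outline is an accurate sketch of the argument in \cite{EKPV07}: reduce to a linear equation for $w=u_1-u_2$ with bounded first--order coefficients, combine a persistence/log--convexity upper bound for weights $e^{\beta x_+^{3/2}}$ with a Carleman lower bound at the same homogeneity, and reach a contradiction by sending the scale parameter to infinity. Your identification of $3/2$ as the critical homogeneity at which both halves barely close is exactly the point emphasized in the paper's remark following the statement. One small clarification: when you write that $c_0$ depends on $k$ ``through the size of $b$'', be careful---$b$ itself depends on the particular solutions $u_1,u_2$, so its size is not universal. The reason $c_0$ is nonetheless universal is that in the Carleman step the potential term $\partial_x(bw)$ is absorbed for \emph{any} bounded $b$ once the large parameter exceeds a threshold depending on $\|b\|_{L^\infty}$; the constant $c_0$ governing the weight comes from the structure of the Carleman inequality for $\partial_t+\partial_x^3$ itself, not from that threshold.
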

 
 \medskip
 
 Above we have used the notation :
 $ x_{+}=max\{x;\,0\}$.
 
 \vspace{3mm}

\begin{remark}\hskip10pt

\begin{enumerate}
\item
The solution of the associated linear IVP to the k-gKdV in \eqref{kgKdV}
  \begin{equation}
 \begin{aligned}
 \begin{cases}
 &\partial_t v + \partial_x^3 v=0,\\
 &v(x,0)=v_0(x),
 \end{cases}
 \end{aligned}
\end{equation}
 is given by the group $\{V(t)\,:\,t\in \mathbb  R\}$ where 
 $$
 V(t)v_0(x)=\frac{1}{\root{3}\of{3t}}\,Ai\left(\frac{\cdot}{\root {3}\of{3t}}\right)\ast v_0(x), 
 $$
 with
 $$
 \;\;\;\;\;\;Ai(x)=c\,\int_{-\infty}^{\infty}\,e^{ ix\xi+i \xi^3 }\,d\xi\;\;\;\;\;\;\;\;\;\;\;\;\;\;\;\;\;\;\;\;\;\;\;\;\;\;\;
 $$
 is the Airy function which satisfies the decay estimate
 $$
 |Ai(x)|\leq c (1+x_{-})^{-1/4}\,e^{-c x_{+}^{3/2}},
 $$
which explains the exponent $3/2$.

\item
The previous remark suggests that the decay of the fundamental solution of  the associated linear IVP provides the appropriate weight for the "norm" in the asymptotic at infinity UCP, see
\eqref{nw}. In general, this is not the case. For example, the fundamental solution of the linear Schr\"odinger equation does not decay. In this case, the weight in \eqref{nw} is related to uncertainty principles for the Fourier transform. In fact, this weight may be different at time $t_1$ and $t_2$, see \cite{EKPV12}.

\item 

For previous results related to Theorem \ref{14} see \cite{Ro} and references therein.
\end{enumerate}
\end{remark}

Theorem \ref{14} is optimal as the following result shows:

\begin{theorem}[\cite{ILP13}]\label{15}
   
    Let $a_0>0$. For any given data
$$
u_0\in L^2(\mathbb R )\cap L^2(e^{a_0x_{+}^{3/2}}dx),
$$
the solution of the IVP for the KdV satisfies that for any $\,T>0$
$$
\sup_{t\in [0,T]}\,\int_{-\infty}^{\infty} e^{a(t)x_{+}^{3/2}}|u(x,t)|^2dx  \leq C^*,
$$
with
$$
C^*=C^*( a_0, \|u_0\|_2, \|e^{a_0x_{+}^{3/2}/2}u_0\|_2, T),
$$
and
$$
a(t)=\frac{a_0}{(1+27 a_0^2t/4)^{1/2}}.
$$

\end{theorem}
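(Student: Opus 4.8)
The plan is to establish the weighted a priori estimate directly by an energy/multiplier argument on a suitably truncated weight, following the ILP approach that makes Theorem \ref{14} optimal. First I would fix $N>0$ and $\epsilon>0$ and introduce a bounded, smooth, increasing truncation $\varphi_{N,\epsilon}(x)$ of the weight $e^{a(t)x_+^{3/2}}$ — for instance $\varphi_{N,\epsilon}(x)=e^{a(t)\min(x_+,N)^{3/2}}$ smoothed near $x=0$ and near $x=N$, with the crucial structural properties that $0\le \varphi'_{N,\epsilon}\le c\,a(t)\,x_+^{1/2}\varphi_{N,\epsilon}$ and that $\varphi_{N,\epsilon}$, $\varphi'_{N,\epsilon}$ and their first few derivatives stay uniformly bounded in $N$ on compact sets. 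The point of the truncation is that $u(\cdot,t)\in L^2$ alone guarantees all the integrals below are finite; the bound will be uniform in $N$, and $N\to\infty$ at the end recovers the stated $C^*$.

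Next I would multiply the KdV equation by $u\,\varphi_{N,\epsilon}$ and integrate in $x$, so that $\tfrac{d}{dt}\int u^2\varphi_{N,\epsilon}\,dx$ is controlled by (a) the time derivative of the weight, which contributes $\int u^2\,\partial_t\varphi_{N,\epsilon}\,dx = a'(t)\int u^2 \min(x_+,N)^{3/2}\varphi_{N,\epsilon}\,dx$ — this is the term with a favorable sign since $a'(t)<0$ — (b) the dispersive term $\int \partial_x^3 u\,u\,\varphi_{N,\epsilon}\,dx$, which after integration by parts produces $-\tfrac32\int (\partial_x u)^2\varphi'_{N,\epsilon}\,dx$ (a good negative term) plus $\tfrac12\int u^2\,\varphi'''_{N,\epsilon}\,dx$, and (c) the nonlinear term $\int u^2\,\partial_x u\,\varphi_{N,\epsilon}\,dx=-\tfrac13\int u^3\,\varphi'_{N,\epsilon}\,dx$. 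The heart of the matter is a pointwise/Cauchy–Schwarz balancing: using $\varphi'\le c\,a(t)x_+^{1/2}\varphi$ and $\varphi'''\le c\,a(t)^3 x_+^{3/2}\varphi + (\text{lower order})$, the bad contributions from (b) and (c) must be absorbed by the good terms $-\tfrac32\int(\partial_x u)^2\varphi'\,dx$ from (b) and $a'(t)\int u^2 x_+^{3/2}\varphi\,dx$ from (a). The precise choice $a(t)=a_0(1+27a_0^2 t/4)^{-1/2}$ is exactly the solution of the ODE making $a'(t)$ plus the constant multiplying $a(t)^3$ in the $\varphi'''$-term nonpositive; this is where the exponent $3/2$ and the constant $27/4$ are forced. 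The cubic term is handled by writing $|u^3\varphi'|\le c\,a(t)\,\|u(t)\|_\infty\,u^2 x_+^{1/2}\varphi$ and using $\|u(t)\|_\infty\lesssim \|u(t)\|_{H^1}$, which is bounded on $[0,T]$ by the well-posedness theory; the factor $x_+^{1/2}\varphi$ is then dominated by $\epsilon\, x_+^{3/2}\varphi + C_\epsilon\varphi$, feeding the small part into the good term and the remainder into a Gronwall-type term $\int u^2\varphi\,dx$.

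After this balancing one arrives at a differential inequality of the form $\tfrac{d}{dt}\int u^2\varphi_{N,\epsilon}\,dx \le C\big(\|u_0\|_2,\|u(t)\|_{H^1},T\big)\int u^2\varphi_{N,\epsilon}\,dx$, with $C$ independent of $N$ and $\epsilon$; Gronwall's inequality and the hypothesis $\int e^{a_0 x_+^{3/2}}|u_0|^2\,dx<\infty$ (which controls the data term $\int u_0^2\varphi_{N,\epsilon}(\cdot,0)\,dx$ since $\varphi_{N,\epsilon}(x,0)\le e^{a_0 x_+^{3/2}}$) then give the uniform bound. Letting $\epsilon\to 0$ and then $N\to\infty$ via the monotone convergence theorem yields $\sup_{[0,T]}\int e^{a(t)x_+^{3/2}}|u(x,t)|^2\,dx\le C^*$ with the advertised dependence of $C^*$, noting that $\|e^{a_0x_+^{3/2}/2}u_0\|_2^2=\int e^{a_0x_+^{3/2}}|u_0|^2\,dx$. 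The main obstacle is the step (b)/(c) balancing: one must keep careful track that every error term generated by differentiating the truncated weight three times (and the end-corrections near $x=N$, which carry the ``wrong'' sign) is either nonpositive or absorbable, and that no estimate secretly uses a bound on $u$ that is not available from $u_0\in L^2\cap L^2(e^{a_0x_+^{3/2}}dx)$ — in particular the argument should be run first for smooth solutions and then extended by the continuous dependence of the KdV flow, so the $H^1$ (indeed $H^s$, $s$ large) norms used in controlling the nonlinearity are legitimate on the approximating level and pass to the limit. A secondary technical point is justifying the integration by parts and the differentiation under the integral sign, which is exactly what the truncation $\min(x_+,N)$ is there to make routine.
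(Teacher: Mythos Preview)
The survey under review does not give its own proof of Theorem~\ref{15}; it is simply quoted from \cite{ILP13}. So there is no ``paper's proof'' to compare against. Your overall strategy --- a weighted energy estimate with a truncated time-dependent weight $e^{a(t)x_+^{3/2}}$, the balance $a'(t)+\tfrac{27}{8}a(t)^3=0$ forcing the explicit formula for $a(t)$, and a Gronwall closure --- is indeed the approach of \cite{ILP13}, and the computation leading to the constant $27/4$ is correct.

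There is, however, a genuine gap in your treatment of the nonlinear term. With the stated $a(t)$ one has $a'(t)+\tfrac{27}{8}a(t)^3=0$ \emph{exactly}, so the ``good'' contribution $a'(t)\int u^2 x_+^{3/2}\varphi\,dx$ from $\partial_t\varphi$ is completely consumed by the leading part of $\int u^2\varphi'''\,dx$; there is no leftover negative $x_+^{3/2}\varphi$-term into which you can ``feed the small part'' $\epsilon\,a(t)\|u\|_\infty\int u^2 x_+^{3/2}\varphi\,dx$. Relaxing the ODE to a strict inequality would produce a strictly smaller $a(t)$ than the one in the statement. Relatedly, invoking $\|u(t)\|_\infty\lesssim\|u(t)\|_{H^1}$ smuggles into $C^*$ a dependence on $\|u_0\|_{H^1}$ (or higher norms of the smooth approximants) that the theorem does not allow; this dependence does \emph{not} disappear in the limit under continuous dependence in $L^2$, because the Gronwall constant itself carries the $H^1$-norm.

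The fix is to avoid $\|u\|_\infty$ altogether and instead absorb the cubic term into the \emph{dispersive} good term $-3\int(\partial_x u)^2\varphi'\,dx$ via a weighted Gagliardo--Nirenberg inequality on the half-line (essentially $\|u\,(\varphi')^{1/2}\|_\infty^2\lesssim \|u\,(\varphi')^{1/2}\|_2\,\|\partial_x u\,(\varphi')^{1/2}\|_2+\text{lower order}$), together with the conserved quantity $\|u(t)\|_2=\|u_0\|_2$. Done this way, the estimate closes with $C^*$ depending only on $a_0$, $\|u_0\|_2$, $\|e^{a_0x_+^{3/2}/2}u_0\|_2$, and $T$, and the $x_+^{3/2}\varphi$-balance is untouched, so the exact $a(t)$ in the statement survives.
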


 \begin{remark}
 Theorem \ref{15} applies to solutions of the IVP associated to the  k-gKdV   equation \eqref{kgKdV} as well as the difference of two solutions of the k-gKdV   equation.
 \end{remark}
 
 We observe that the decay condition in Theorem \ref{14}   requires only one side decay at infinity condition. Thus, one may ask if   by assuming  a decay condition holding at booth of the real line sides this can be relaxed. For example, \underline{question} : if $u_1, \,u_2$ are appropriate solutions of the k-gKdV   equation \eqref{kgKdV} such that
 \begin{equation}
\label{1+}
 u_1(\cdot,0)-u_2(\cdot,0),\,\;\, u_1(\cdot,1)-u_2(\cdot,1)\in L^2(e^{a|x|}dx),\;\;\;\;\forall \,a>0,
\end{equation}
does this imply  that $u_1\equiv u_2$?

 To conclude this section we recall the so called "soliton resolution conjecture". This affirms, roughly speaking, that for generic initial data the corresponding solution of the k-gKdV will eventually resolve into a radiation component that disperses like a linear solution, plus a localized component that behaves like a soliton or multi-soliton solutions (or breathers). Assuming it, one expects that for  large time $t$ to have at most at decay  $u(x,t)\sim e^{ax}$ as $x\uparrow \infty$, since the solitons for the k-gKdV satisfy it, see \eqref{twKdV}. A precise result in this direction seems to be \underline{unknown}.

\section{The case of the  Benjamin-Ono equation}
 
 Next, we shall consider the Benjamin-Ono (BO) equation  
\begin{equation}
\label{BO}
\partial_t u - \mathcal  H\partial_x^2u +u\partial_x u = 0, \qquad t, x \in \mathbb R,
\end{equation}
where $\mathcal H$ denotes  the Hilbert transform
\begin{equation}
\label{HTR}
\mathcal H f(x)=\frac{1}{\pi} \,{\rm p.v.} (\frac{1}{x}\ast f)(x)=-i\,(sgn(\xi)\,\widehat {f}(\xi))^{\lor}(x).
\end{equation}
The BO equation was first deduced in \cite{Ben}  and \cite{On} as a model for long internal gravity waves in deep stratified fluids. Later, it was shown to be a completely integrable system \cite{AbFo}. We recall that the BO equation possesses traveling wave solutions (solitons) $u(x,t)=\phi(x-t)$ of the form
\begin{equation}
\label{twBO}
\phi_k(x)=\frac{4}{1+x^2},
\end{equation}
which is smooth but it exhibits a very mild decay in comparison with that of the k-gKdV \eqref{kgKdV} described in \eqref{twKdV}.

The well-posedness of the IVP and the IPBVP for the BO equation and its generalized k-form 
k-gBO equation, i.e.
\begin{equation}
\label{kgBO}
\partial_t u - \mathcal  H\partial_x^2u +u^k\partial_x u = 0, \qquad t, x\in\mathbb R,\;\;\;k\in\mathbb Z^+,
\end{equation}
has been broadly studied, we refer to \cite{Sa19} for a survey of these results.

Next, we shall introduce the following notation:
\begin{equation}
\begin{aligned}
\label{notZ}
&\,Z_{s,r}=H^s(\mathbb R)\cap L^2(|x|^{2r}dx),\;\;\;\;\;\;s,\,r\in\mathbb R,\\
\\
&\,\dot Z_{s,r}=Z_{s,r}\cap \{\,\widehat f(0)=0\},\;\;\;\;\;\;s,\,r\in\mathbb R.
\end{aligned}
\end{equation}

The known asymptotic at infinity UCP for the BO equation are embedded in the following well-posedness result  in weighted spaces $\,Z_{s,r}$:

\begin{theorem}  [\cite{FP10}-\cite{FLP12} after \cite{Io1}-\cite{Io2}] \label{BOres}
 
Let $\,u\in C([0,T] : H^5(\mathbb R))$ be the solution of the IVP for the BO  equation \eqref{BO} with data $u_0$.
\begin{enumerate}
\item
If $u_0\in Z_{5,r},\;\,r\in(0,5/2)$, then    $\,u\in C([0,T] : Z_{5,r})$.
\item
 If there exist  $\,0<t_1<t_2<T\,$ such that $\,u(\cdot,t_j) \in Z_{5,5/2},\;j=1,2$, then 
$\,\;\widehat u(0,t)=0,\;t\in[0,T]$.
\item
 If $u_0\in \dot Z_{5,r},\,r\in[5/2,7/2)$, then  $u\in C([0,T] : \dot Z_{5,r})$.
\item
 If there exist $t_1, t_2, t_3$ with  $0<t_1<t_2<t_3<T$
 such that  $\,u(\cdot,t_j) \in Z_{5,7/2},\,j=1,2,3,$ then $\,u\equiv 0$.
\item
 If $\,u_0\in \dot Z_{5,4}\;$ and $\;\int x\,u_0(x)dx\neq 0$, then

\begin{equation}
\label{t^*}\;\;\;\,\;\;\;u(\cdot,t^*) \in \dot Z_{5,4},\;\;\;\;\;\;\;\;t^*=- \frac{4\,\displaystyle\int xu_0(x)\,dx}{\|u_0\|^2_2}.
\end{equation}

\end{enumerate}
 \end{theorem}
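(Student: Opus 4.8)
The plan is to read off all five assertions from one object: the low‑frequency behaviour of $\widehat u(\xi,t)$ and how it evolves, which is dictated by the fact that the BO dispersion symbol $\xi|\xi|$ lies in $H^{s}_{\mathrm{loc}}$ for every $s<5/2$ but not for $s=5/2$ near $\xi=0$. Write $v(\xi,t)=\widehat u(\xi,t)$; the Fourier transform of \eqref{BO} is
\begin{equation*}
\partial_t v(\xi,t)=i\,\xi|\xi|\,v(\xi,t)-\tfrac{i\xi}{2}\,\widehat{u^2}(\xi,t),
\end{equation*}
whence $\partial_t v(0,t)=0$ (the mean $\widehat u(0,t)$ is conserved), and, since $\|u(t)\|_2=\|u_0\|_2$ and the dispersive term does not contribute to the first $\xi$-derivative at the origin, the quantity $\partial_\xi v(0,t)$ --- a fixed nonzero constant times the first moment $\int x\,u(x,t)\,dx$ --- is an \emph{affine} function of $t$ whose slope is a nonzero universal constant times $\|u_0\|_2^2$; matching the constants will give exactly the time $t^*$ of \eqref{t^*}. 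Parts (1) and (3) I would obtain from weighted energy estimates: test \eqref{BO} against smooth bounded truncations $w_N(x)\sim\langle x\rangle^{2r}$, differentiate $\int w_N\,u^2\,dx$ in time, and bound all the terms; the only nonlocal point is the contribution of $\mathcal H\partial_x^2 u$, handled by Calder\'on--Zygmund commutator estimates between $\mathcal H$ and powers of $\langle x\rangle$ together with interpolation, while the $H^5$-regularity absorbs $u\partial_x u$ with room to spare. The threshold $r<5/2$ in (1) is precisely the Sobolev regularity of $\xi|\xi|$ at $\xi=0$; when $\widehat u(0)=0$ one may morally write $v=\xi\,\widetilde v$ near the origin, which replaces $\xi|\xi|$ by $\xi^2|\xi|\in H^{s}_{\mathrm{loc}}$ for every $s<7/2$ and raises the threshold to $r<7/2$, giving (3). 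These are the estimates of \cite{Io1,Io2}, refined in \cite{FP10,FLP12}; I expect them to be routine but lengthy.

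The heart of the theorem is the obstruction part, which I would prove by tracking a \emph{jump} at $\xi=0$. For (2): given $u\in C([0,T]:H^5)$ with $u(\cdot,t_1)\in Z_{5,5/2}$, persistence run from $t_1$ gives $v\in C([0,T]:H^r)$ for every $r<5/2$, while $u(\cdot,t_1)\in Z_{5,5/2}$ says $\partial_\xi^2 v(\cdot,t_1)\in H^{1/2}(\R)$, which has no jump discontinuity. Writing the Duhamel formula from $t_1$ on the Fourier side and applying $\partial_\xi^2$, I would isolate the one term in which both derivatives hit the propagator $e^{i(t-t_1)\xi|\xi|}$: it produces $\partial_\xi^2(\xi|\xi|)=2\,\sgn(\xi)$ multiplied by $v(0,t_1)=\widehat u(0)$. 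Every other term is continuous near $\xi=0$, hence in $H^{1/2}_{\mathrm{loc}}$: the factor $\partial_\xi(\xi|\xi|)=2|\xi|$ vanishes there, $v(\cdot,t_1)$ and the propagator are $C^{1,\alpha}$ and $C^{1,1}$, and in the nonlinear integral the extra factor $\xi$ turns a possible $\sgn(\xi)$ singularity into $|\xi|$; here one uses (1) to know $\widehat{u^2}(\cdot,s)\in H^r$, $r<5/2$, for $s\in[t_1,t_2]$. Hence $\partial_\xi^2 v(\cdot,t_2)$ has a jump at $\xi=0$ equal to a fixed nonzero constant times $(t_2-t_1)\,\widehat u(0)$; since $\partial_\xi^2 v(\cdot,t_2)\in H^{1/2}(\R)$ has no jump and $t_2\neq t_1$, we conclude $\widehat u(0)=0$.

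Parts (4) and (5) repeat this one derivative higher. By (2), applied to two of the three times, $\widehat u(0)=0$, so we work in $\dot Z$; now $\partial_\xi^3(\xi|\xi|)=4\delta_0$ contributes nothing (as $v(0,t)=0$), while $3\,\partial_\xi^2(\xi|\xi|)\,\partial_\xi v=6\,\sgn(\xi)\,\partial_\xi v$ forces at $\xi=0$ a jump proportional to $\partial_\xi v(0,t)$, with a further contribution from the nonlinearity through $\widehat{u^2}(0,s)=\|u_0\|_2^2$. Thus the jump $\Psi(t)$ of $\partial_\xi^3 v(\cdot,t)$ at $\xi=0$ satisfies $\Psi(t_1)=0$ (from $u(\cdot,t_1)\in Z_{5,7/2}$), and $\Psi'(t)$ is an affine function of $t$ built from $\partial_\xi v(0,t_1)$ and $\|u_0\|_2^2$; so $\Psi$ is a quadratic polynomial in $t$, vanishing at $t_1$, whose leading coefficient is a nonzero multiple of $\|u_0\|_2^2$. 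The hypotheses at $t_2,t_3$ force $\Psi(t_2)=\Psi(t_3)=0$: a quadratic with three distinct roots vanishes identically, so its leading coefficient does, i.e.\ $\|u_0\|_2=0$ and $u\equiv0$, which is (4). For (5), $u_0\in\dot Z_{5,4}\subset Z_{5,7/2}$ gives $\widehat u(0)=0$ and $\Psi(0)=0$, so $\Psi(t)=t(\alpha+\beta t)$ with $\beta$ a nonzero multiple of $\|u_0\|_2^2$ and $\alpha=\Psi'(0)$ a nonzero multiple of $\int x\,u_0\,dx$ (here the hypothesis $\int x\,u_0\,dx\neq0$ enters, so $\alpha\neq0$); hence $\Psi$ vanishes exactly at $t=0$ and at one further time, which matching constants identifies with $t^*$ in \eqref{t^*}. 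Since the jump of $\partial_\xi^3 v(\cdot,t)$ at $\xi=0$ is the only obstruction to $\widehat u(\cdot,t)\in H^4$, and it vanishes at $t^*$, a refinement of the energy estimates of (3) --- carried out up to $r=4$ with the $\sgn(\xi)$-singular part subtracted off, which is legitimate at $t=t^*$ precisely because that part is absent there --- gives $\widehat u(\cdot,t^*)\in H^4$, i.e.\ $u(\cdot,t^*)\in\dot Z_{5,4}$.

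I expect the real work to lie in two places. First, in (2) and (4) one must rigorously account for \emph{all} the terms produced by $\partial_\xi^2$, resp.\ $\partial_\xi^3$, in the Duhamel formula and check that the only one failing to be $H^{1/2}_{\mathrm{loc}}$ near $\xi=0$ is the explicit $\sgn(\xi)$ term --- this is where the persistence results (1)/(3) are needed, to know $\widehat{u^2}(\cdot,s)$ lies in the correct weighted class at intermediate times. Second, and more delicate, is the final step of (5): passing from ``the $\sgn$-jump vanishes at $t^*$'' to the genuine conclusion $x^4u(\cdot,t^*)\in L^2$ is not formal --- it does not follow merely from $\widehat u(\cdot,t^*)\in\bigcap_{r<7/2}H^r$ plus the absence of that jump --- and requires the quantitative weighted estimate for $x^4u$ in which the singular term is isolated and shown to be the sole possible source of divergence.
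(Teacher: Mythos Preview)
The paper does not prove Theorem~\ref{BOres}; it is quoted from \cite{FP10}, \cite{FLP12} (building on \cite{Io1}, \cite{Io2}), and the paper only supplies the explanatory Remark following the statement. Your proposal is consistent with what that Remark records: the persistence parts (1) and (3) are obtained by weighted energy estimates using the $A_p$ theory for $\mathcal H$ and the Calder\'on/Baj\v{s}anski--Coifman commutator bounds \eqref{Ap}--\eqref{comm}, and the special time $t^*$ in (5) is exactly the unique nonzero root of $\int_0^t\!\int x\,u(x,s)\,dx\,ds$, which is how the paper motivates \eqref{t^*}. Your Fourier-side ``jump of $\partial_\xi^k\widehat u$ at $\xi=0$'' mechanism for (2), (4), (5) is the approach of \cite{FP10}, \cite{FLP12}: one writes the Duhamel formula on the Fourier side, differentiates, and isolates the unique term carrying $\partial_\xi^2(\xi|\xi|)=2\,\sgn\xi$; the resulting obstruction is, up to constants, $(t-t_1)\,\widehat u(0)$ in the $5/2$ case and $\int_{t_1}^{t}\!\int x\,u(x,s)\,dx\,ds$ in the $7/2$ case, which is precisely your quadratic $\Psi(t)$ with leading coefficient a nonzero multiple of $\|u_0\|_2^2$. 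So your outline matches the cited proofs and the paper's own commentary; the two places you flag as requiring genuine work (bookkeeping of all terms produced by $\partial_\xi^k$ in Duhamel, and the upgrade from ``no $\sgn$-jump at $t^*$'' to an honest $L^2(|x|^8dx)$ estimate) are indeed where the effort lies in \cite{FLP12}.
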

 
 \begin{remark}\hskip10pt
 
 \begin{enumerate}
 \item 
In \cite{Io1}-\cite{Io2}  Theorem \ref{BOres} part (1) was proved for $r=1,2$, part (3) for $r=3$ and part (4) for $r=4$.
 \item
 
 The proof of Theorem \ref{BOres} is based on weighted energy estimates and relies on several inequalities for the Hilbert transform $\mathcal H$. Among them the so called $A_p$ condition introduced in \cite{Muc}, i.e. $w\in L^1_{loc}(\mathbb R)$ non-negative satisfies the $A_p,\,1<p<\infty$, condition  if 
 \begin{equation} \label{Ap}
 \sup_{Q\,\text{interval}}\,\Big(\frac{1}{|Q|}\int_Q w dx\Big)\Big(\frac{1}{|Q|}\int_Q w^{1-p'} dx\Big)^{p-1}=c_p(w)<\infty,
 \end{equation}
 where $1/p+1/p'=1$. In particular, $|x|^{\alpha}\in A_p$ if and only if $\alpha\in (-1,p-1)$. It was shown in \cite{HuMucWe} that the Hilbert transform  $\mathcal H$ is bounded in $L^p(wdx)$ if and only if $w$ satisfies \eqref{Ap}. Moreover, it was established in \cite{Pet} that in the case $p=2$ the operator norm is a multiple of $c_2(w)$ in \eqref{Ap}. The proof uses these results although not in their strongest versions.
 
 Also, the proof of Theorem \ref{BOres} relies on commutator estimates for the Hilbert transform $\mathcal H$, mainly the following : for $k,m \in \mathbb N\cup\{0\},\,k+m\geq 1$
 \begin{equation} \label{comm}
 \| \partial_x^k \big[\mathcal H;a\big]\partial_x^mf\|_P\leq c_{p,k,m}\|\partial_x^{k+m}a\|_{\infty} \|f\|_p.
 \end{equation}
 The case $k+m=1$ corresponds to the Calder\'on commutator estimate \cite{Ca}. The general case of \eqref{comm} was deduced in \cite{BCo}. For a different proof see \cite{DaGaPo}.
 \item
 The result in Theorem \ref{BOres} is due to the
connection between the dispersive 
relation (modeling by an operator with non-smooth symbol) and  the nonlinearity
of the BO equation \eqref{BO}. In particular, one can see that if $u_0\in \dot Z_{5,4}$
with $\int x u_0(x)dx\neq 0$,
then the solution $\,U(t)u_0(x)\,$ of the associated linear IVP 
$$
\partial_t u - \mathcal  H\partial_x^2u=0,\;\;\;\;\;u(x,0)=u_0(x),
$$
satisfies 
\begin{equation*}
\hskip30pt U(t)u_0(x)=c(e^{4\pi^2i t|\xi|\xi}\widehat {u_0}(\xi))^{\lor}\in L^2(|x|^{7-})-
L^2(|x|^7),\;\forall \,t\neq 0.
\end{equation*}
However, for the same data $u_0$ one has that the corresponding solution $u(x,t)$ of the BO equation 
\eqref{BO} satisfies
$$
u(\cdot,0),\,u(\cdot, t^*) \in L^2(|x|^8dx),
$$
and
$$
u(\cdot, t)\in
L^2(|x|^{7-})- L^2(|x|^7),\;\;\forall 
\,t\notin\{ 0, t^*\}.
$$
\item
 The value of $t^*$  in \eqref{t^*} can be motivated  by the identity
$$
\frac{d\,\,}{dt}\,\int_{-\infty}^{\infty}xu(x,t)dx=\frac{1}{2}
\|u(\cdot,t)\|_2^2=\frac{1}{2} \|u_0\|_2^2,
$$
(using the second conservation, i.e. the $L^2$-norm of the real solution)
which describes the time evolution of the first momentum of the solution. Hence, 
$$
\int_{-\infty}^{\infty}xu(x,t)dx= \int_{-\infty}^{\infty}xu_0(x)dx +\frac{t}{2}
\|u_0\|_2^2.
$$
So assuming that
\begin{equation}
\label{condition}
\int_{-\infty}^{\infty}x\,u_0(x)dx\neq 0,
\end{equation}
one looks for the times where the average of the first momentum of the solution
vanishes, i.e. for $t$ such that
$$
\int_0^t \int_{-\infty}^{\infty}x\,u(x, t')dx dt'= \int_0^t
\Big(\int_{-\infty}^{\infty}x\,u_0(x)dx +\frac{t'}{2} \|u_0\|_2^2\,\Big) dt'=0,
$$
which under the assumption \eqref{condition} has a unique solution $t=t^*$ given by
the formula in \eqref{t^*}.

\item
Theorem \ref{BOres} leaves \underline{open} the question of a UCP at infinity involving only two different times  by strengthening the hypothesis on the decay.
More precisely,  can one find $r>4$ such that if $u\in C([-T,T] : \dot Z_{s,7/2^{-}})$, $\;s\gg 1$, is a solution of the BO equation \eqref{BO}  satisfying that  there exist $t_1,\,t_2\in[-T,T]$, $t_1\neq t_2$ such that $u(t_j)\in \dot Z_{s,r},\;j=1,2$, then $u\equiv 0$?

In \cite{Flo} it was proved that this is not possible at least for $r<11/2$.

\item One sees  that the UCP in Theorem \ref{BOres} assumes that the second solution $u_2\equiv 0$. A result for asymptotic at infinity UCP involving two arbitrary solutions $u_1, \,u_2$ of the IVP for the BO equation remains \underline{open}.

\item The results (1)-(3) in Theorem \ref{BOres} extend to solutions of the IVP associated to the k-gBO equation in \eqref{kgBO}. However, part (4) and (5) (modified version) hold for solutions of the IVP associated to the k-gBO equation in \eqref{kgBO} when $k$ is odd.

 \end{enumerate}
 \end{remark}

 Next, we consider the local UCP for solutions of the IVP associated to the k-gBO equation \eqref{kgBO}. In this regard we have the following result:

\begin{theorem} [\cite{KPV20}]\label{BOloc}

 Let $$u_1,\,u_2\in  C([0,T]:H^s(\mathbb R)) \cap C^1((0,T):H^{s-2}(\mathbb R)),\;s>5/2,$$ be strong solutions of
k-gBO equation \eqref{kgBO}. If there exist an open interval $\,I \subset \mathbb R$  and $t_1\in(0,T)$ such that
$$
\begin{cases}
\begin{aligned}
&\;\;\;\;u_1(x,t_1)=u_2(x,t_1),\;\;\;\;\;\;\;x\in I,\\
&\;\partial_tu_1(x,t_1)=\partial_tu_2(x,t_1),\;\;\;\;x\in I,
\end{aligned}
\end{cases}
$$
 then,
$$
u_1(x,t)=u_2(x,t),\;\;\;\;(x,t)\in \mathbb R\times [0,T].
$$

 In particular, if $\,u_1(x,t)=u_2(x,t),\;\,(x,t)\in \Omega$ with $\Omega\subset \mathbb R\times [0,T]$ open, then $\,u_1\equiv u_2$. 
\end{theorem}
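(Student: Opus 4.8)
The plan is to reduce the local uniqueness statement to a unique continuation result at the level of the \emph{stationary} (elliptic) equation obtained by freezing time at $t_1$, exploiting that the nonlocality of $\mathcal H\partial_x^2$ turns local vanishing of $u_1-u_2$ and $\partial_t(u_1-u_2)$ into a genuinely global constraint. Set $w=u_1-u_2$. By hypothesis $w(\cdot,t_1)\equiv 0$ on $I$ and $\partial_t w(\cdot,t_1)\equiv 0$ on $I$. Subtracting the two copies of \eqref{kgBO} and evaluating at $t=t_1$, on $I$ we get
\begin{equation}
\label{pp1}
\mathcal H\partial_x^2 w(x,t_1)=\partial_t w(x,t_1)+\big(u_1^k\partial_xu_1-u_2^k\partial_xu_2\big)(x,t_1)=0,\qquad x\in I,
\end{equation}
because every term in the nonlinearity is a local polynomial expression in $w$ and its first derivative, all of which vanish on $I$ together with $w$ itself. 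Thus $g:=\mathcal H\partial_x^2 w(\cdot,t_1)$ is a function in $H^{s-2}(\mathbb R)$ which \emph{vanishes on the open interval} $I$, while $w(\cdot,t_1)$ vanishes on $I$ as well. Writing $v=w(\cdot,t_1)$, these two facts read: $v\equiv 0$ on $I$ and $\mathcal H\partial_x^2 v\equiv 0$ on $I$.

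The key step is then a statement of the form: \emph{if $v\in H^{s}(\mathbb R)$, $s>5/2$, vanishes on an open interval $I$ and $\mathcal H v$ (equivalently $\mathcal H\partial_x^2 v$) also vanishes on $I$, then $v\equiv 0$}. This is where the one-dimensional structure of the Hilbert transform is used: on the interval $I$ one has $\mathcal H v(x)=\frac1\pi \,\mathrm{p.v.}\!\int \frac{v(y)}{x-y}\,dy=\frac1\pi\int_{\mathbb R\setminus I}\frac{v(y)}{x-y}\,dy$ since $v$ vanishes on $I$, so the integrand has no singularity for $x\in I$; hence $x\mapsto \mathcal H v(x)$ extends to a real-analytic (indeed holomorphic in a neighborhood of $I$ in $\mathbb C$) function on $I$, and if it vanishes on the open set $I$ it vanishes to infinite order, forcing all the moments $\int_{\mathbb R\setminus I} y^j v(y)\,dy$ and more precisely the analytic continuation of the Cauchy transform to vanish, which by standard complex-analytic arguments (the function $v+i\mathcal H v$ is the boundary value of a function holomorphic in a half-plane) gives $v\equiv 0$. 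I would invoke this as the stationary/elliptic UCP for the operator $\mathcal H\partial_x^2$ (it is precisely the kind of classical stationary argument the authors allude to in the Introduction, ``the local UCP results for these nonlocal models are based in stationary arguments''); once $v=w(\cdot,t_1)\equiv 0$, i.e. $u_1(\cdot,t_1)=u_2(\cdot,t_1)$ on all of $\mathbb R$, uniqueness for the IVP \eqref{kgBO} in $C([0,T]:H^s)\cap C^1((0,T):H^{s-2})$ with $s>5/2$ propagates this forward and backward in time to give $u_1\equiv u_2$ on $\mathbb R\times[0,T]$.

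The main obstacle is making the stationary UCP step rigorous at the available regularity and with the right function-theoretic bookkeeping: one must be careful that $w(\cdot,t_1)\in H^s$ only, not analytic, so the argument cannot be a naive ``analytic function vanishing on an open set''; instead it is the \emph{Cauchy/Hilbert transform pair} $(v,\mathcal H v)$ that enjoys analyticity off $\mathbb R$, and one needs that both $v$ and $\mathcal H v$ vanish on $I$ so that the holomorphic extension from the upper half-plane agrees across $I$ with the one from the lower half-plane, yielding an entire function that decays at infinity and is therefore $0$. A clean way to run this is via $\partial_x^2$: from \eqref{pp1}, $\mathcal H\partial_x^2 v$ vanishes on $I$ and $\partial_x^2 v$ vanishes on $I$, so the analytic-completion argument applies to $\partial_x^2 v$ and gives $\partial_x^2 v\equiv 0$, hence $v$ is affine; since $v\in L^2$ and vanishes on $I$, $v\equiv 0$. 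The final propagation-in-time step is then routine uniqueness for the quasilinear BO-type IVP and needs no new input beyond the well-posedness theory already in force under the hypotheses of the theorem.
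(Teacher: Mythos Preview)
Your proposal is correct and follows essentially the same route as the paper: subtract the equations to obtain $w(\cdot,t_1)=\mathcal H\partial_x^2 w(\cdot,t_1)=0$ on $I$, invoke a stationary lemma (``$f=\mathcal H f=0$ on $I\Rightarrow f\equiv 0$'') applied to $\partial_x^2 w(\cdot,t_1)$, and finish with uniqueness for the IVP. The only refinement is that the paper phrases the complex-analytic step via the Schwarz reflection principle for the single upper-half-plane extension $F(z)=2\int_{\xi\ge 0}e^{iz\xi}\widehat f(\xi)\,d\xi$ (which vanishes on $I$ and hence is identically zero), rather than by matching upper and lower half-plane extensions into an entire function---note that $f+i\mathcal H f$ only extends to the upper half-plane, so your ``entire function'' description needs this adjustment.
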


\begin{remark}\label{BO-rem}\hskip10pt

\begin{enumerate}

\item It is surprising that a period of more than 30 years separated the local UC result for the k-gKdV \eqref{kgKdV} (Theorem \ref{87}) and that for the k-gBO \eqref{kgBO} (Theorem \ref{BOloc}). Specially, since many results first established for the KdV equation have been studied in the BO equation, see \cite{LiPo}. Maybe, this is due to the difference in their proof arguments. More  concretely, the classical approach in \cite{SaSc}, \cite{Iz}, \cite{EKPV06}, \cite{EKPV07}, among others, is based on Carleman estimates which cannot be extended to non-local models. 
 For these ones, it seems that  simpler but very specific stationary arguments are needed. Some of them have just recently been established. 
\item 
Theorem \ref{BOloc}  extends to a pair of solutions $u_1,\,u_2$ of  the  Burgers-Hilbert (BH) equation 
$$  \partial_t u - \mathcal H   u + u^k\partial_x u = 0,\qquad (x,t) \in  \, \mathbb R\times \mathbb R,\;\;\;k\in \mathbb Z^+,$$
see \cite{BiHu}, \cite{HITW}.
 \item 
It also applies to solutions of the  IPBVP
 associated to the k-gBO equation \eqref{kgBO}. In this case the Hilbert transform is defined as 
$$
\mathcal H f(x)=\frac{1}{2\pi} \, {\rm p.v.} \,\int_0^{2\pi} f(t)\,\cot\Big(\frac{x-t}{2}\Big) \, dt.
$$

\end{enumerate}
\end{remark}
It will be clear from the proof of Theorem \ref{BOloc} sketched below that this can be generalized in the following form:

\begin{theorem} \label{BOlocg}

 Let $n\in \mathbb Z^+$ and $$u_1\in  C([0,T]:H^s(\mathbb R)) \cap C^1((0,T):H^{s-2}(\mathbb R)),\;s>(k+1)n+5/2,$$ be a strong solution of
k-gBO equation \eqref{kgBO}. If there exist  an open interval $\,I \subset \mathbb R$ and $t_1\in (0,T)$ such that
$$
\partial_tu_1(x,t_1)=Q_{(k+1)n-1}(x),\;\;\;\;\text{and}\;\;\;\;u(x,t_1)=P_n(x),\;\;\;\;\;x\in I,
$$
where $Q_{(k+1)n-1}$ and $P_n$ are polynomials with real coefficients of degree at most $(k+1)n-1$ and $n$, respectively,
 then
$$
Q_{(k+1)n-1}(x)\equiv P_n(x)\equiv u_1(x,t)\equiv  0,\;\;\;\;(x,t)\in \mathbb R\times [0,T].
$$

\end{theorem}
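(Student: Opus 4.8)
The plan is to follow the stationary argument behind Theorem \ref{BOloc}, but to carry it out at the level of the Taylor expansion of the Hilbert transform rather than with a single evaluation. The key observation is that on the interval $I$ where $u_1(x,t_1)=P_n(x)$ and $\partial_t u_1(x,t_1)=Q_{(k+1)n-1}(x)$, the k-gBO equation \eqref{kgBO} forces
\[
\mathcal H \partial_x^2 u_1(x,t_1) \;=\; \partial_t u_1(x,t_1) + u_1^k \partial_x u_1(x,t_1) \;=\; Q_{(k+1)n-1}(x) + P_n(x)^k P_n'(x),\qquad x\in I,
\]
and the right-hand side is a polynomial of degree at most $(k+1)n-1$ on $I$ (note $\deg(P_n^k P_n') \le kn + (n-1) = (k+1)n - 1$). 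So $g:=\mathcal H\partial_x^2 u_1(\cdot,t_1)$ agrees with a polynomial of degree $\le (k+1)n-1$ on $I$. Since $\partial_x^2 u_1(\cdot,t_1)\in H^{s-2}$ with $s-2 > (k+1)n + 1/2$, the function $f:=\partial_x^2 u_1(\cdot,t_1)$ is a Schwartz-type (in particular $L^2$, smooth enough) function whose Hilbert transform is polynomial on $I$. First I would prove the rigidity lemma: \emph{if $f\in L^2(\mathbb R)$ and $\mathcal H f$ coincides on a nonempty open interval with a polynomial of degree $m$, then $f\equiv 0$ provided $f$ also vanishes on that interval} — which is exactly the situation, since $\partial_x^2 u_1(\cdot,t_1) = P_n''(x)$ is itself a polynomial (degree $\le n-2$) on $I$, hence combined with $f\in L^2$ it must vanish on $I$ unless $n\le 1$; in the generic range $n\ge 2$ a polynomial in $L^2_{loc}$ restricted to $I$ is fine, but $P_n''$ being a nonzero polynomial on all of $I$ is the data, and $f\in L^2(\mathbb R)$ only constrains behavior at infinity, so the real content is the Hilbert-transform rigidity.

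Concretely, I would argue as follows. Set $h = f - \chi_I P_n''$ where I extend the local polynomial; better, work directly: we know $\mathcal H f$ equals a polynomial $R$ of degree $\le (k+1)n-1$ on $I$, and $f$ equals $P_n''$ on $I$. Choose a point $x_0\in I$ and expand. The operator $\mathcal H$ commutes with $\partial_x$, so $\partial_x^{(k+1)n}\mathcal H f = \mathcal H \partial_x^{(k+1)n} f = 0$ on $I$ in the sense that $\mathcal H(\partial_x^{(k+1)n} f)$ is the $(k+1)n$-th derivative of a degree-$\le(k+1)n-1$ polynomial, namely $0$, on $I$; but $\partial_x^{(k+1)n} f$ is smooth, in $L^2$ up to lower order, and equals $\partial_x^{(k+1)n} P_n'' = 0$ on $I$ as well (since $(k+1)n \ge n-2+1$ for $k,n\ge1$). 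So $w:=\partial_x^{(k+1)n} f$ satisfies $w|_I = 0$ and $(\mathcal H w)|_I = 0$. Now invoke the classical fact that if $w$ and $\mathcal H w$ both vanish on an open interval and $w$ is, say, in $L^2$, then $w\equiv 0$: indeed $w + i\mathcal H w$ is (up to constants) the boundary value of a function holomorphic in the upper half-plane — the Hardy space projection — and if its real and imaginary parts vanish on an interval of the boundary, the holomorphic function vanishes identically by the Schwarz reflection / boundary uniqueness theorem for Hardy spaces. Hence $w\equiv 0$, i.e. $\partial_x^{(k+1)n} u_1(\cdot,t_1)''$… tracking back, $\partial_x^{(k+1)n+2} u_1(\cdot,t_1) \equiv 0$, so $u_1(\cdot,t_1)$ is globally a polynomial; being in $H^s(\mathbb R)$ it must be the zero polynomial, so $u_1(\cdot,t_1)\equiv 0$. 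Then $P_n\equiv 0$ on $I$ (hence as a polynomial $P_n\equiv 0$), the equation gives $\partial_t u_1(\cdot,t_1) = -u_1^k\partial_x u_1(\cdot,t_1) \equiv 0$, so $Q_{(k+1)n-1}\equiv 0$ on $I$ and therefore identically. Finally, $u_1(\cdot,t_1)\equiv 0$ together with well-posedness of the IVP for k-gBO in $H^s$, $s>5/2$ (backward and forward uniqueness), gives $u_1\equiv 0$ on $\mathbb R\times[0,T]$.

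The step I expect to be the main obstacle — and the one to state carefully — is the boundary-uniqueness/rigidity lemma for the Hilbert transform: that $w\in L^2$ with $w$ and $\mathcal H w$ both vanishing on a nonempty open interval forces $w\equiv 0$. One must be slightly careful about regularity ($w$ is not just $L^2$ but has $(k+1)n$ fewer derivatives than the original $H^s$ data, which is more than enough) and about the precise version of the F. and M. Riesz / boundary uniqueness theorem used. A clean route is: $w + i\mathcal H w \in H^2(\mathbb C_+)$ (Hardy space of the upper half-plane) since $\widehat{w+i\mathcal H w}$ is supported in $\xi\ge 0$; its boundary trace vanishes on an interval of positive measure, and a nonzero $H^2$ function cannot vanish on a set of positive measure on $\partial\mathbb C_+$ (log-integrability of the modulus), so $w+i\mathcal H w\equiv 0$, whence $w\equiv 0$. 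I would also note, as in Remark \ref{BO-rem}, that the same scheme applies verbatim in the periodic setting (using the periodic Hilbert transform and Hardy space on the disk) and to the Burgers–Hilbert hierarchy, the only change being bookkeeping of the polynomial degree produced by the nonlinearity $u^k\partial_x u$ acting on a degree-$n$ polynomial.
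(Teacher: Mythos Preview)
Your proposal is correct and follows essentially the same route the paper intends: use the equation at $t_1$ to see that $\mathcal H\partial_x^2 u_1(\cdot,t_1)$ is a polynomial of degree at most $(k+1)n-1$ on $I$, differentiate $(k+1)n$ times so that both $\partial_x^{(k+1)n+2}u_1(\cdot,t_1)$ and its Hilbert transform vanish on $I$, invoke the rigidity lemma (Lemma~\ref{lemma1}) to conclude $\partial_x^{(k+1)n+2}u_1(\cdot,t_1)\equiv 0$, and finish by noting that a polynomial in $H^s(\mathbb R)$ must vanish and then appealing to well-posedness. The regularity assumption $s>(k+1)n+5/2$ is exactly what is needed so that the differentiated function still lies in $H^r$ with $r>1/2$.

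The only difference worth flagging is in the justification of the rigidity step. The paper proves Lemma~\ref{lemma1} via the analytic extension $F(z)=2\int_{\xi\ge 0} e^{iz\xi}\widehat f(\xi)\,d\xi$ to the upper half-plane together with the Schwarz reflection principle (Proposition~\ref{prop1}). You instead invoke the Hardy-space fact that a nonzero $H^2(\mathbb C_+)$ function cannot have boundary trace vanishing on a set of positive measure (log-integrability of $|F|$ / F.~and M.~Riesz). Both arguments are standard and yield the same conclusion; the Schwarz-reflection route is slightly more elementary and self-contained, while your Hardy-space route makes the $L^2$ hypothesis on $w$ do the work directly without needing continuity up to the boundary. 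Either is perfectly adequate here.

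Two small clean-ups: when you write ``the equation gives $\partial_t u_1(\cdot,t_1)=-u_1^k\partial_x u_1(\cdot,t_1)\equiv 0$'' you dropped the $\mathcal H\partial_x^2 u_1$ term, but of course it also vanishes once $u_1(\cdot,t_1)\equiv 0$; and your degree check ``$(k+1)n\ge n-2+1$'' should simply read $(k+1)n+2>n$, which holds for all $k,n\ge 1$.
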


Next, we shall sketch the proof of Theorem \ref{BOloc}.

\begin{proof}  [Proof of Theorem \ref{BOloc}]

We need the following consequence of Schwarz reflection principle in complex analysis.

\begin{proposition}\label{prop1}  Let $\,I\subseteq \mathbb R\,$ be an open interval, $\,b\in(0,\infty]$  and
$$
D_b=\{z=x+iy\in \mathbb C:0<y<b\},\;\;L=\{x+i0\in \mathbb C:x\in I\}.
$$

 Let $\,F:D_b\cup L\to\mathbb C\,$ be continuous and $\,F \big|_{D_b}$ analytic.

 If $\,\,F \big|_{L}\equiv 0$, then $\,F\equiv 0$.
\end{proposition}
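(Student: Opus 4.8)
The plan is to deduce Proposition \ref{prop1} from the Schwarz reflection principle together with the identity theorem for holomorphic functions. The key observation that makes reflection applicable is that the hypothesis $F|_L\equiv 0$ forces $F$ to be (trivially) real-valued on $L$, so no extra regularity or realness assumption on the boundary values is needed.

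First I would fix the symmetric region
$$\Omega:=\{\,z\in\C:\ \mathrm{Re}(z)\in I,\ |\mathrm{Im}(z)|<b\,\},$$
which is convex --- in particular connected --- and invariant under $z\mapsto\bar z$; set $\Omega^{+}=\Omega\cap D_b$, $\Omega^{-}=\Omega\cap\{\mathrm{Im}(z)<0\}$ and $\Omega^{0}=L$. By hypothesis $F$ is holomorphic on $\Omega^{+}$, continuous on $\Omega^{+}\cup\Omega^{0}$, and real (indeed identically zero) on $\Omega^{0}$. The Schwarz reflection principle then produces a holomorphic function $\widetilde F$ on all of $\Omega$ with $\widetilde F=F$ on $\Omega^{+}\cup\Omega^{0}$ and $\widetilde F(z)=\overline{F(\bar z)}$ on $\Omega^{-}$; this step is insensitive to whether $b$ is finite or $b=\infty$.

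Next, $\widetilde F$ vanishes on $\Omega^{0}=L$, which is a nonempty open segment of $\R$ contained in the connected open set $\Omega$, so the zero set of $\widetilde F$ has an accumulation point in $\Omega$. By the identity theorem, $\widetilde F\equiv 0$ on $\Omega$, hence $F\equiv 0$ on the nonempty open set $\Omega^{+}\subseteq D_b$. A second application of the identity theorem --- now on the connected open set $D_b$, where $F$ is holomorphic and vanishes on the nonempty open subset $\Omega^{+}$ --- gives $F\equiv 0$ on $D_b$, and therefore $F\equiv 0$ on $D_b\cup L$ by continuity (or directly from the hypothesis on $L$).

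I do not expect any genuine obstacle here: the statement is a packaging of standard one-variable complex analysis, and the only point deserving a word is the remark that vanishing boundary values are in particular real, so the reflection principle applies verbatim. The mild bookkeeping is the two-step use of the identity theorem --- first to propagate the zero across $L$ and throughout $\Omega$, then to spread it over all of $D_b$ --- which is needed because one cannot reflect directly across the entire real axis, since $F$ is only controlled on the segment $L=I\times\{0\}$.
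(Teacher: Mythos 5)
Your argument is correct and is exactly the route the paper intends: it presents the proposition as a consequence of the Schwarz reflection principle, which is precisely what you carry out (reflection across $L$ using that $F$ vanishes, hence is real, there, followed by two applications of the identity theorem). No gaps.
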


Using Proposition \ref{prop1} we get: 

\begin{lemma}\label{lemma1} Let  $\,f\in H^s(\mathbb R),\,s>1/2,\,$ be a real valued function. If there exists an open interval $I\subset \mathbb R$ such that 
$$
f(x)=\mathcal Hf(x)=0,\;\;\;\;\;\;\;\forall \,x\in I,
$$
then $f\equiv 0$.

\end{lemma}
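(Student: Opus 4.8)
The plan is to exploit the fact that a real function $f$ and its Hilbert transform $\mathcal{H}f$ together form the boundary values of a single analytic function on the upper half-plane, and then invoke Proposition \ref{prop1}. Concretely, I would set $F = f + i\,\mathcal{H}f$ on $\mathbb{R}$ and recall that this is the boundary trace of a function analytic in the upper half-plane $D_\infty = \{x+iy : y>0\}$, obtained for instance by the Poisson/conjugate-Poisson extension, or equivalently by noting that $\widehat{F}(\xi) = (1+\operatorname{sgn}\xi)\widehat{f}(\xi) = 2\,\widehat{f}(\xi)\,\carac_{\{\xi>0\}}$ is supported in $[0,\infty)$, so that $F(x) = c\int_0^\infty e^{ix\xi}\widehat{f}(\xi)\,d\xi$ extends to $F(z) = c\int_0^\infty e^{iz\xi}\widehat{f}(\xi)\,d\xi$, which converges and is holomorphic for $\operatorname{Im} z > 0$.

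Next I would check the hypotheses of Proposition \ref{prop1}. Since $f\in H^s(\mathbb{R})$ with $s>1/2$, the function $f$ is continuous and bounded, and $\mathcal{H}f\in H^s(\mathbb{R})$ as well (the Hilbert transform preserves $H^s$), so $\mathcal{H}f$ is also continuous and bounded; hence $F$ extends continuously to $D_\infty\cup\mathbb{R}$. On the interval $L = \{x+i0 : x\in I\}$ we have, by hypothesis, $f(x) = \mathcal{H}f(x) = 0$, so $F\big|_L \equiv 0$. Proposition \ref{prop1} (with $b=\infty$) then gives $F\equiv 0$ on $D_\infty\cup L$, and in particular $F\equiv 0$ on all of $\mathbb{R}$ by analyticity propagating to the whole boundary — more precisely, $F\equiv 0$ in $D_\infty$ forces its continuous boundary trace to vanish identically. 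Taking real parts yields $f\equiv 0$.

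The step requiring the most care is the passage from "$F$ analytic in $D_\infty$, continuous up to $L$, vanishing on $L$" to "$f\equiv 0$ on all of $\mathbb{R}$," rather than merely on $I$. Proposition \ref{prop1} as stated already delivers $F\equiv 0$ on $D_\infty\cup L$; one then needs that $F$ has a continuous extension to the entire real line (which follows from $f,\mathcal{H}f\in H^s$, $s>1/2$, as noted) so that the vanishing of $F$ on the open dense-in-nothing set — rather, the vanishing of the holomorphic function $F$ on the open set $D_\infty$ — forces the boundary values on all of $\mathbb{R}\setminus I$ to be zero too. Alternatively, and perhaps more cleanly, I would argue directly: $F\equiv 0$ on $D_\infty$ implies $\widehat{f}(\xi)=0$ for a.e. $\xi>0$; since $f$ is real, $\widehat{f}(-\xi)=\overline{\widehat{f}(\xi)}$, so $\widehat{f}\equiv 0$ a.e., whence $f\equiv 0$. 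This Fourier-side conclusion is the slickest route and avoids any delicate boundary-regularity bookkeeping; I would present the argument in that order, using Proposition \ref{prop1} only to conclude $F\big|_{D_\infty}\equiv 0$.

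A minor point to verify is that the conjugate-Poisson extension of $f$ really has boundary trace $\mathcal{H}f$ in the pointwise (continuous) sense under the $H^s$ hypothesis — this is standard, but worth a one-line remark, e.g. citing that for $f$ in the Sobolev range $s>1/2$ both $f$ and $\mathcal{H}f$ are uniformly continuous and the harmonic extensions converge uniformly to them as $y\downarrow 0$. With that in hand the proof is complete.
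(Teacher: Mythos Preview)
Your proposal is correct and follows essentially the same approach as the paper: define $F=f+i\mathcal{H}f$, observe via the Fourier side that $\widehat{F}$ is supported in $[0,\infty)$ so that $F$ extends holomorphically to the upper half-plane, and then invoke Proposition~\ref{prop1}. Your write-up is in fact more complete than the paper's sketch, since you spell out the passage from $F\equiv 0$ on $D_\infty$ to $f\equiv 0$ via the reality condition $\widehat{f}(-\xi)=\overline{\widehat{f}(\xi)}$, a step the paper leaves implicit.
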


By standard approximation, the same result holds assuming that  $f\in H^s(\mathbb R),\,s\in \mathbb R$. Thus, defining 
$$
F(x+iy)\equiv \int \,e^{i(x+iy)\xi}\,(\widehat{f+i\,\mathcal H\,f})(\xi)\,d\xi
$$
$$
=\int \,e^{i(x+iy)\xi}\,(1+\text{sgn}(\xi))\widehat{f}(\xi)d\xi=2\int_{ \xi\geq 0}e^{i(x+iy)\xi}\,\,\widehat{f}(\xi)d\xi$$
one has that 
$$
F(x+i\,0)=(f+i\,\mathcal H\,f)(x)
$$ 
is continuous and has an analytic extension $F(x+iy)$ on $y>0$.
Hence, using Proposition \ref{prop1} one proves  Lemma \ref{lemma1}.

 To conclude the proof of Theorem \ref{BOloc}  we define 
 $$\,w(x,t)=(u_1-u_2)(x,t),$$ 
 which satisfies the equation
$$
\partial_tw-\mathcal H \partial_x^2 w+\partial_xu_2\,w+u_1\,\partial_xw=0,\;(x,t)\in\mathbb R\times[0,T].
$$
By the hypothesis and the equation one has that
$$
w(x,t_1)=\mathcal H \partial^2_xw(x,t_1)=0,\;\;\;\;x\in I.
$$
Thus,  Lemma \ref{lemma1} yields the result.

\end{proof}

\section{The intermediate Long Wave equation }

The Intermediate Long Wave (ILW) equation can be written as 
\begin{equation}
\label{ILW}
\partial_t u - \mathcal L_\delta \partial_x^2  u + \frac1{\delta}\partial_x u  +  u\partial_x u = 0,\,\;(x,t) \in  \mathbb R^2,
\end{equation}
where $u=u(x,t)$  is a real-valued function, $\,\delta>0\,$ and
$$\mathcal L_\delta f(x) =-\frac1{2\delta}\,\rm{p.v.} \int \rm{coth}\it \left(\frac{\pi(x-y)}{2\delta}\right)f(y)dy.$$

Also, $\mathcal L_\delta$ is a multiplier operator with  symbol
$$\sigma(\partial_x\mathcal L_\delta)=\widehat{\partial_x\mathcal L_\delta} =2\pi \xi \,\rm{coth}\,(2\pi \delta \xi).$$

The ILW equation describes  long internal gravity waves in a stratified fluid with finite depth given by the parameter 
$\,\delta$, see \cite{KKD} and \cite{Jos}.  Moreover, it was shown in \cite{KAS81}-\cite{KAS82} that is formally a completely integrable model as the KdV, the CH and the BO equations, see also \cite{Iv}. The ILW equation has traveling waves solutions 
(solitons) $u_{\delta,c}(x,t)=\phi_{\delta}(x-ct), \,c>0$ of the form
\begin{equation}
\label{ilw-tv}
\phi_{\delta}(\xi)=\frac{ 2 a\,\sin(a\delta)}{\cosh(a\xi)+\cos(a\delta)},
\end{equation} 
 where $a\in(0,\pi/\delta)$ solves the equation  $\,a\delta \cot(a\delta)=1-c\delta$, see \cite{Jos}.        
            
            Regarding the IVP associated to the ILW equation  \eqref{ILW} it was proven in \cite{ABFS}   that solutions of the  ILW equation converge, as $\delta \to \infty$ (deep-water limit), to solutions of the IVP associated to the BO equation with the same initial data. Also, it was established in \cite{ABFS} that
if $u_{\delta}(x,t)$ denotes a solution of the ILW equation \eqref{ILW}, then
$$v_{\delta}(x,t)=\,\frac{3}{\delta} \,u_{\delta}\Big(x,\frac{3}{\delta} t\Big)$$
converges,  as $\delta\to 0$ (shallow-water limit), to a solution  of the KdV equation with the same initial data.

With respect to well-posedness of the IVP associated to ILW equation we refer to \cite{Sa19} and references therein.

We \underline{do not know} any result concerning  the asymptotic at infinity UCP for the ILW equation, even in the case where the second solution $u_2\equiv 0$.

In \cite{KPV20} the local UCP results for the BO equation were extended to the ILW equation :

\begin{theorem} [\cite{KPV20}]\label{ILWloc}

 Let $$u_1,\,u_2\in C([0,T]:H^s(\mathbb R)) \cap C^1((0,T):H^{s-2}(\mathbb R)),\;s>5/2,$$ be strong solutions of
ILW equation \eqref{ILW}.  If there exist an open interval $\,I \subset \mathbb R$  and $t_1\in(0,T)$ such that
$$
\begin{cases}
\begin{aligned}
&\;\;\;\;u_1(x,t_1)=u_2(x,t_1),\;\;\;\;\;\;\;x\in I,\\
&\;\partial_tu_1(x,t_1)=\partial_tu_2(x,t_1),\;\;\;\;x\in I,
\end{aligned}
\end{cases}
$$
 then
$$
u_1(x,t)=u_2(x,t),\;\;\;\;(x,t)\in \mathbb R\times [0,T].
$$

 In particular, if $\,u_1(x,t)=u_2(x,t),\;\,(x,t)\in \Omega$ with $\Omega\subset \mathbb R\times [0,T]$ open, then $\,u_1\equiv u_2$. 
  \end{theorem}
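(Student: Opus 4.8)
The plan is to run the argument used for Theorem~\ref{BOloc}, with the Hilbert transform $\mathcal H$ replaced by $\mathcal{L}_\delta$. Put $w=u_1-u_2$; using $u_1\partial_xu_1-u_2\partial_xu_2=w\,\partial_xu_1+u_2\,\partial_xw$ and subtracting the two instances of \eqref{ILW}, one sees that $w$ solves
\[
\partial_t w-\mathcal{L}_\delta\partial_x^2 w+\frac{1}{\delta}\,\partial_x w+w\,\partial_x u_1+u_2\,\partial_x w=0\qquad\text{on }\ \mathbb{R}\times[0,T].
\]
Since $s>5/2$, $w(\cdot,t_1)\in C^2(\mathbb{R})$ and it vanishes on the open interval $I$, so $w(\cdot,t_1)$, $\partial_xw(\cdot,t_1)$ and $\partial_x^2w(\cdot,t_1)$ all vanish on $I$; the hypothesis $\partial_tu_1(\cdot,t_1)=\partial_tu_2(\cdot,t_1)$ on $I$ gives $\partial_tw(\cdot,t_1)=0$ on $I$, so evaluating the displayed equation at $t=t_1$, $x\in I$, leaves $\mathcal{L}_\delta\partial_x^2w(x,t_1)=0$ for $x\in I$. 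Hence, writing $g:=\partial_x^2w(\cdot,t_1)$, everything reduces to a stationary statement that is the exact analogue of Lemma~\ref{lemma1}: \emph{a real-valued $g\in H^{s-2}(\mathbb{R})$ with $\widehat g(0)=0$ and $g=\mathcal{L}_\delta g=0$ on an open interval $I$ is identically $0$}. Here $\widehat g(0)=0$ is automatic, since $g=\partial_x^2w(\cdot,t_1)$, and it is needed only to neutralise the pole of the symbol $\coth(2\pi\delta\xi)$ of $\partial_x\mathcal{L}_\delta\partial_x^{-1}$ at $\xi=0$.

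For this stationary claim I would mimic the proof of Lemma~\ref{lemma1}, the one new feature being that the symbol $-i\coth(2\pi\delta\xi)$ of $\mathcal{L}_\delta$ produces analyticity in a \emph{strip} rather than in a half-plane. Since $1+\coth(2\pi\delta\xi)=2/(1-e^{-4\pi\delta\xi})$ is bounded on $\{\xi>0\}$ and decays like $e^{-4\pi\delta|\xi|}$ on $\{\xi<0\}$, using $\widehat g(0)=0$ to control the integrand near the origin,
\[
F(z)=\int e^{2\pi i z\xi}\,\big(1+\coth(2\pi\delta\xi)\big)\,\widehat g(\xi)\,d\xi
\]
is analytic on the strip $D_{2\delta}=\{z:0<\mathrm{Im}\,z<2\delta\}$, continuous up to $\mathbb{R}$, with $F(x+i0)=(g+i\,\mathcal{L}_\delta g)(x)$. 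As $g=\mathcal{L}_\delta g=0$ on $I$, $F$ vanishes on $I\times\{0\}$, so Proposition~\ref{prop1} (applied with $b=2\delta<\infty$) gives $F\equiv0$; thus $g+i\,\mathcal{L}_\delta g\equiv0$ on $\mathbb{R}$, and since $g$ and, by the parity $\widehat g(-\xi)=\overline{\widehat g(\xi)}$, also $\mathcal{L}_\delta g$ are real-valued, $g\equiv 0$. Therefore $\partial_x^2w(\cdot,t_1)\equiv0$, so $w(\cdot,t_1)$ is affine and, being in $H^s$, $w(\cdot,t_1)\equiv 0$. One can instead phrase this via harmonic extension: $\partial_x\mathcal{L}_\delta$ coincides with the Dirichlet-to-Neumann operator $f\mapsto-\partial_yW_f(\cdot,0)$, where $W_f$ is the bounded harmonic extension of $f$ to the strip $\mathbb{R}\times(0,\delta)$ with $W_f\equiv0$ on the upper edge $\{y=\delta\}$; then one reflects $W_{w(\cdot,t_1)}$ across $I$ (its value and normal derivative differing only by a constant there, which forces $W$ to be linear in $y$ near $I$ and hence, by the homogeneous condition on the far edge, to vanish), concluding $w(\cdot,t_1)\equiv0$ by unique continuation for harmonic functions. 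This packaging avoids the $\xi=0$ bookkeeping entirely.

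Finally, $w(\cdot,t_1)\equiv0$ means $u_1(\cdot,t_1)=u_2(\cdot,t_1)$, and uniqueness for the IVP associated to \eqref{ILW} in $H^s$, $s>5/2$ — valid in both time directions, see \cite{Sa19} — propagates this identity to all of $\mathbb{R}\times[0,T]$, so $u_1\equiv u_2$. The ``in particular'' clause follows by choosing $t_1$ and an interval $I$ with $I\times\{t_1\}\subset\Omega$ (possible since $\Omega$ is open) and differentiating $u_1=u_2$ in $t$ there, which is legitimate because $u_j\in C^1((0,T):H^{s-2})$. The substantive step is the stationary claim: Carleman estimates, which drive the local analysis of the KdV equation, are unavailable for the non-local $\mathcal{L}_\delta$, and the genuinely new point relative to the Benjamin--Ono case is precisely that $\coth(2\pi\delta\xi)$ forces the complex-analytic reflection into a finite-width strip, with the attendant need to absorb the constant that the pole at $\xi=0$ introduces.
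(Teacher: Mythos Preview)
Your proof is correct and follows essentially the same route as the paper's. The only organisational difference is that the paper packages the stationary lemma as ``$f=\mathcal L_\delta\partial_x f=0$ on $I$ implies $f\equiv 0$'' (applied with $f=\partial_x w(\cdot,t_1)$), so that the symbol $2\pi\xi\coth(2\pi\delta\xi)$ of $\mathcal L_\delta\partial_x$ is already smooth at $\xi=0$; you instead push both derivatives into $g=\partial_x^2 w(\cdot,t_1)$ and invoke the second-order vanishing of $\widehat g$ at $0$ to tame the pole of $\coth$. Either way one lands on the same function $F(z)=\int e^{2\pi i z\xi}\,c\,\xi\,\dfrac{e^{4\pi\delta\xi}}{1-e^{4\pi\delta\xi}}\,\widehat{\partial_x w}(\xi)\,d\xi$, analytic in the strip $D_{2\delta}$, and finishes with Proposition~\ref{prop1}. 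Your aside recasting $\partial_x\mathcal L_\delta$ as the Dirichlet-to-Neumann map for the strip $\mathbb R\times(0,\delta)$ is a pleasant alternative viewpoint that the paper does not mention, but it is not needed for the argument.
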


As in the case of  BO equation one has the following extension  of  the second part of Theorem \ref{ILWloc}:

\begin{theorem}\label{ILWlocg}

 Let $n\in \mathbb Z^+$ and 
 $$u_1\in C([0,T]:H^s(\mathbb R)) \cap C^1((0,T):H^{s-2}(\mathbb R)),\;s>2n+5/2$$ be a strong solutions of
ILW equation \eqref{ILW}. If there exist an open interval  $\,I \subset \mathbb R$ and $t_1\in (0,T)$ such that
$$
\partial_tu_1(x,t_1)=Q_{2n-1}(x),\;\;\;\text{and}\;\;\;\;u_1(x,t_1)=P_n(x),\;\;\;\;x\in I,
$$
where $Q_{2n-1}$ and $P_n$ are  polynomials with real coefficients of degree at most $2n-1$ and $n$ respectively, then
$$
Q_{2n-1}(x)\equiv P_n(x)\equiv u_1(x,t)\equiv 0,\;\;\;\;(x,t)\in \mathbb R\times [0,T].
$$
  \end{theorem}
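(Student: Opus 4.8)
The plan is to follow the proof of Theorem \ref{BOloc} given above, with two modifications: the Benjamin--Ono input Lemma \ref{lemma1} is replaced by its analogue for the operator $\mathcal{L}_\delta$, and the polynomial data is absorbed by differentiating the equation enough times in $x$, exactly as one would pass from Theorem \ref{BOloc} to Theorem \ref{BOlocg}. The substitute for Lemma \ref{lemma1} that I would establish first is the following.

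\emph{Claim.} If $g\in H^{s'}(\mathbb{R})$, $s'>1/2$, is real valued with $\int g\,dx=0$, and there is an open interval $I\subset\mathbb{R}$ such that $g(x)=\mathcal{L}_\delta g(x)=0$ for all $x\in I$, then $g\equiv 0$.

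To prove the Claim I would mimic the construction of the function $F$ in the proof of Lemma \ref{lemma1} and set
\[
G(z)=\int e^{2\pi i\xi z}\,\bigl(1+\coth(2\pi\delta\xi)\bigr)\,\widehat{g}(\xi)\,d\xi,\qquad\text{where}\quad 1+\coth(2\pi\delta\xi)=\frac{2}{1-e^{-4\pi\delta\xi}} .
\]
This multiplier converges to $2$ as $\xi\to+\infty$ and to $0$ exponentially as $\xi\to-\infty$, and its simple pole at $\xi=0$ is cancelled by $\widehat{g}(0)=0$; hence $G$ is holomorphic in the strip $D_{2\delta}=\{z:0<\mathrm{Im}\,z<2\delta\}$ and extends continuously up to $\{\mathrm{Im}\,z=0\}$ for $g$ of the stated regularity. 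Since $\widehat{\mathcal{L}_\delta g}(\xi)=-i\coth(2\pi\delta\xi)\widehat{g}(\xi)$, one has $G(x+i0)=g(x)+i\,\mathcal{L}_\delta g(x)$, so the hypothesis says that $G$ vanishes on the segment $I\times\{0\}$. Proposition \ref{prop1}, applied with $b=2\delta$, then gives $G\equiv 0$ in $D_{2\delta}$, and therefore $g=\mathrm{Re}\,G(\cdot+i0)\equiv 0$. (As $\delta\to\infty$ the multiplier $1+\coth(2\pi\delta\xi)$ tends to $2\carac_{[0,\infty)}(\xi)$, the multiplier of the Benjamin--Ono case; the finite depth forces $G$ into a strip of width $2\delta$ instead of a half-plane.) As with Lemma \ref{lemma1}, the restriction $s'>1/2$ is removed by a standard approximation.

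Granting the Claim, the proof of Theorem \ref{ILWlocg} runs as follows. Evaluating the ILW equation \eqref{ILW} at $t=t_1$ and restricting to $I$, where $u_1(\cdot,t_1)=P_n$ and hence every $x$-derivative of $u_1(\cdot,t_1)$ equals that of $P_n$, the terms $\tfrac1\delta\partial_xu_1$ and $u_1\partial_xu_1$ become polynomials of degree $\le n-1$ and $\le 2n-1$; together with $\partial_tu_1(\cdot,t_1)=Q_{2n-1}$ on $I$ this gives
\[
\mathcal{L}_\delta\partial_x^2u_1(x,t_1)=Q_{2n-1}(x)+\frac1\delta P_n'(x)+P_n(x)P_n'(x)=:R_{2n-1}(x),\qquad x\in I,
\]
a polynomial of degree $\le 2n-1$. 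Set $g:=\partial_x^{2n+2}u_1(\cdot,t_1)$: it is real, has $\int g\,dx=0$ (in fact $\widehat{g}$ vanishes to order $2n+2$ at the origin, which also makes $\mathcal{L}_\delta g$ perfectly regular), and belongs to $H^{s-2n-2}(\mathbb{R})$ with $s-2n-2>1/2$ by hypothesis. Since differentiation is local and $\mathcal{L}_\delta$ commutes with $\partial_x$, on $I$ one has $g=\partial_x^{2n+2}P_n=0$ and $\mathcal{L}_\delta g=\partial_x^{2n}\bigl(\mathcal{L}_\delta\partial_x^2u_1(\cdot,t_1)\bigr)=\partial_x^{2n}R_{2n-1}=0$. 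The Claim forces $g\equiv 0$, so $u_1(\cdot,t_1)$ is a polynomial of degree $\le 2n+1$; as it also lies in $H^s(\mathbb{R})$, it vanishes identically. Hence $P_n\equiv 0$, so $R_{2n-1}\equiv 0$ and therefore $Q_{2n-1}\equiv 0$; and since $u_1(\cdot,t_1)\equiv 0$, uniqueness for the IVP associated to the ILW equation (cf. \cite{Sa19}) gives $u_1\equiv 0$ on $\mathbb{R}\times[0,T]$.

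The step I expect to be the crux is the Claim. Unlike the Hilbert transform, with symbol $-i\,\sgn(\xi)$, for which $f+i\mathcal{H}f$ is the trace of a function holomorphic in a half-plane, here $1+\coth(2\pi\delta\xi)$ decays only for $\xi<0$, so $G$ is holomorphic merely in a strip of finite width $2\delta$; one has to recognise this strip as the exact domain on which Proposition \ref{prop1} can be used, verify the continuous extension of $G$ to the boundary segment $I\times\{0\}$ for the low regularity permitted (handled by approximation, the mean-zero condition needed to cancel the pole at $\xi=0$ being automatic since in the application $g$ is a high-order $x$-derivative), and check that $\mathcal{L}_\delta g$ is indeed the imaginary part of $G(\cdot+i0)$. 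Everything after the Claim is the same bookkeeping as for k-gBO: the number $m$ of $x$-derivatives to apply is forced by the two constraints $\partial_x^{\,m}P_n\equiv 0$ and $\partial_x^{\,m-2}R_{2n-1}\equiv 0$, giving $m=2n+2$ and the regularity threshold $s>2n+5/2$ in the statement, after which one finishes using that a polynomial in $H^s(\mathbb{R})$ is identically zero, together with uniqueness for \eqref{ILW}.
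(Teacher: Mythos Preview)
Your proof is correct and follows essentially the same approach as the paper. Your Claim is a mild reformulation of the paper's key Proposition (stated just before the proof of Theorem \ref{ILWloc}): the paper works with $f$ and the function $F=\partial_x f+i\mathcal L_\delta\partial_x f$, whose Fourier transform $c\,\xi\,(1+\coth(2\pi\delta\xi))\widehat f(\xi)$ has the pole of $\coth$ at $\xi=0$ cancelled by the explicit factor $\xi$; you instead work directly with a mean-zero $g$ (in the application $g=\partial_x^{2n+2}u_1(\cdot,t_1)$, so $\widehat g$ vanishes to high order at $0$), which amounts to taking $g=\partial_x f$. The holomorphic extension to the strip $D_{2\delta}$, the appeal to Proposition \ref{prop1}, the differentiation in $x$ to kill the polynomial data, and the final use of well-posedness are all the same.
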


 Next, we shall sketch the proof of Theorem \ref{ILWloc}.

\begin{proof}  [Proof of Theorem \ref{ILWloc}]

First, we shall have the following result related to that in Proposition \ref{prop1}:

\begin{proposition}

Let $f\in H^s(\mathbb R),\,s>3/2$ be a real valued function. If there exists an open set $I\subset \mathbb R$ such that 
$\,
f(x)=\mathcal L_{\delta}\partial_x f(x)=0,\,\;\,x\in I,
$
then $\;f\equiv 0$.
\end{proposition}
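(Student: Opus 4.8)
The plan is to run the argument of Lemma \ref{lemma1} with the complex half-plane there replaced by the horizontal strip of height $\delta$: just as $\mathcal H$ (together with the identity) is the boundary operator attached to the harmonic extension to a half-plane, the operator $\mathcal L_\delta\partial_x$ is, up to sign, the normal-derivative operator attached to the harmonic extension to a strip of depth $\delta$ --- which is exactly the fluid layer in the derivation of \eqref{ILW}. So the Schwarz-reflection statement in Proposition \ref{prop1} (with $b=\delta$) should again be the decisive tool.

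In detail, fix a real-valued $f\in H^s(\mathbb R)$, $s>3/2$ (the complex case follows by splitting into real and imaginary parts, since the symbol $2\pi\xi\coth(2\pi\delta\xi)$ is real), and let $S_\delta=\{x+iy:\ 0<y<\delta\}$. Define $\phi$ on $S_\delta$ by $\widehat{\phi}(\xi,y)=m(\xi,y)\,\widehat f(\xi)$, with $m(\xi,y)=\sinh\!\big(2\pi\xi(\delta-y)\big)/\sinh(2\pi\xi\delta)$. A direct Fourier computation gives: $\partial_y^2 m=(2\pi\xi)^2 m$, so $\phi$ is harmonic in $S_\delta$; $m(\xi,0)=1$ and $m(\xi,\delta)=0$, so $\phi(\cdot,y)\to f$ in $H^s$ as $y\to0^+$ and $\phi(\cdot,\delta)\equiv0$; and $\partial_y m(\xi,0)=-2\pi\xi\coth(2\pi\xi\delta)$, so, recalling $\sigma(\partial_x\mathcal L_\delta)=2\pi\xi\coth(2\pi\delta\xi)$, the trace of $\partial_y\phi$ at $y=0$ equals $-\mathcal L_\delta\partial_x f$. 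Since for $0\le y\le\delta$ the multipliers $m(\xi,y)$ and $\langle\xi\rangle^{-1}\partial_y m(\xi,y)$ are bounded uniformly in $y$, the functions $\partial_x\phi(\cdot,y)$ and $\partial_y\phi(\cdot,y)$ stay bounded in $H^{s-1}(\mathbb R)\hookrightarrow C_b(\mathbb R)$ and converge, as $y\to0^+$, to $\partial_x f$ and $-\mathcal L_\delta\partial_x f$; hence $\partial_x\phi$ and $\partial_y\phi$ extend continuously to $S_\delta\cup\{x+i0:\ x\in I\}$.

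Now put $\Psi=\partial_x\phi-i\,\partial_y\phi$. Harmonicity of $\phi$ makes $\Psi$ holomorphic on $S_\delta$, it is continuous up to $L:=\{x+i0:\ x\in I\}$, and on $L$ its value is $\partial_x f+i\,\mathcal L_\delta\partial_x f$, which vanishes on $I$: there $f\equiv0$ (hence $\partial_x f\equiv0$) and $\mathcal L_\delta\partial_x f\equiv0$ by hypothesis. Proposition \ref{prop1} (with $D_b=S_\delta$) then forces $\Psi\equiv0$ on $S_\delta$; thus $\nabla\phi\equiv0$, so $\phi$ is constant on the connected set $S_\delta$, and $\phi(\cdot,\delta)\equiv0$ makes that constant zero. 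Therefore $f=\lim_{y\to0^+}\phi(\cdot,y)\equiv0$.

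I expect the only point requiring real care to be the boundary regularity used in the second paragraph --- that $\partial_x\phi$ and $\partial_y\phi$ have continuous traces on $I\times\{0\}$, so that Proposition \ref{prop1} genuinely applies --- and this is precisely what the hypothesis $s>3/2$ buys (one derivative more than in Lemma \ref{lemma1}, because $\mathcal L_\delta\partial_x$ carries one more derivative than $\mathcal H$). The conceptual step, recognizing $\mathcal L_\delta\partial_x$ as the Dirichlet-to-Neumann-type operator of the strip $S_\delta$, is natural rather than an obstacle, and all the identities above are routine multiplier computations.
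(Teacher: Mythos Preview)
Your proof is correct. It reaches the same conclusion as the paper's argument but by a genuinely different route.

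The paper works directly on the Fourier side: it sets $F=\partial_xf+i\,\mathcal L_\delta\partial_xf$, computes
\[
\widehat F(\xi)=c\,\xi\Big(1+\coth(2\pi\delta\xi)\Big)\widehat f(\xi)
=c\,\xi\,\frac{2e^{4\pi\delta\xi}}{e^{4\pi\delta\xi}-1}\,\widehat f(\xi),
\]
observes that this multiplier gives one-sided exponential decay (as $\xi\to-\infty$), and concludes that $F$ extends holomorphically to the strip $\{0<y<2\delta\}$; then Proposition \ref{prop1} applies exactly as in the Hilbert-transform case. You instead build the harmonic extension $\phi$ of $f$ to the strip $\{0<y<\delta\}$ with Dirichlet data at $y=\delta$, identify $\mathcal L_\delta\partial_x$ as the associated Dirichlet-to-Neumann operator, and take $\Psi=\partial_x\phi-i\,\partial_y\phi$. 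A short computation shows your $\Psi$ and the paper's $F$-extension are literally the same holomorphic function (restricted to the smaller strip): in Fourier, $\widehat{\Psi}(\xi,y)=e^{-2\pi\xi y}\widehat F(\xi)$. So the two arguments converge on the same object, but the paper reaches it in one line of symbol calculus, while your route makes transparent the physical origin of $\mathcal L_\delta$ as the boundary operator for the finite-depth layer. The paper's approach is shorter and generalizes immediately to any dispersive symbol with the right one-sided decay; yours is more structural and explains, rather than merely verifies, why the strip of height $\delta$ is the natural domain. Your handling of the boundary regularity (the point you flag) is fine: the uniform multiplier bounds you wrote down give $H^{s-1}$-convergence of $\nabla\phi(\cdot,y)$ as $y\to0^+$, and $s>3/2$ then yields the continuous trace needed for Proposition \ref{prop1}.
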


Define
$$
\;F(x)\equiv \partial_xf(x)+i \mathcal L_{\delta}\partial_x f(x),
$$
so
\begin{equation}
\begin{aligned}
\widehat{F}(\xi)&=\widehat{(\partial_xf+i\mathcal L_{\delta}\partial_xf)}(\xi)\\
&=\widehat{F}(\xi)=c\,\xi \Big(1+\frac{e^{2\pi\delta\xi}+e^{-2\pi\delta\xi}}{e^{2\pi\delta\xi}-e^{-2\pi\delta\xi}}\,\Big)\,\widehat{f}(\xi)\\
&=c \,\xi \,\frac{e^{4\pi\delta \xi}}{1-e^{4\pi \delta \xi}}\,\widehat{f}(\xi).
\end{aligned}
\end{equation}

Thus, $\,\widehat{F}\in L^1(\mathbb R)$ with appropriate exponential decay. Therefore $\,F(x)$
has an analytic extension 
$$
F(x+iy)=\int_{-\infty}^{\infty}\;e^{2\pi i \xi (x+iy)}\,\widehat{F}(\xi)\,d\xi
$$
to the strip
$
\;\;D_{2\delta}=\{z=x+iy\in \mathbb C\,:\,0<y<2\delta\}.
$
This completes the proof. 
 \end{proof}
\section{The Camassa-Holm equation and related models}

This section is mainly concerned with the  Camassa-Holm (CH) equation
\begin{equation}\label{CH}
\partial_tu+3 u \partial_xu-\partial_t \partial_x^2 u=2\partial_xu \partial_x^2u+ u\partial_x^3u, \hskip5pt \;t,x\in\R.
\end{equation}

The CH equation \eqref{CH} was first explicitly written in \cite{FF} a work on hereditary symmetries. Later, it was  explicitly derived as a physical model for shallow water waves  in \cite{CH}, where its solutions were also investigated. It has also appeared as a model in nonlinear dispersive waves in hyper-elastic rods \cite{Dai}. 

The CH equation \eqref{CH} has received extensive attention due to its remarkable properties, among them the fact that it is a bi-Hamiltonian completely integrable model  (see   \cite{CH}, \cite{CoMc}, \cite{Mc}, \cite{M}, \cite{Par} and references therein).

The CH equation has special  traveling waves solutions (solitons) called {\it peakons}
\begin{equation}\
\label{peakon}
u(x,t)=\phi(x-ct)=c\,e^{-|x-ct|},\;c>0.
\end{equation}
The multi-peakon solutions exhibit the "elastic" collision property that reflects their soliton character.

It is convenient to rewrite the CH equation as
\begin{equation}
\label{CH0}
\partial_t u+u\partial_xu+\partial_x(1-\partial_x^2)^{-1}\big(u^2+\frac{1}{2}\,\big(\partial_xu)^2\big)=0.
\end{equation}

The IVP as well as the IPBVP associated to the equation \eqref{CH0} has been considerably examined.   

In \cite{LO} and \cite{RB}  strong local well-posedness of the IVP associated to \eqref{CHk} was established in the classical Sobolev space
$
\,H^s(\R)=(1-\partial_x^2)^{-s/2}L^2(\R)$ with $s>3/2.
$

However, one observes that peakon solutions, described in \eqref{peakon} (case $k=0$), do not belong to these spaces.
In fact, one has that  for any $p\in [1,\infty)$
\begin{equation}\label{peakon-sobolev-p}
\phi(x)=e^{-|x|}\notin H^{1+1/p,p}(\R),
\end{equation}
where for $s\in \R$ and $p\in [1,\infty)$ 
\begin{equation}\label{Sobolev-p}
H^{s,p}(\R)=(1-\partial_x^2)^{-s/2}L^p(\R),
\end{equation}
with $\,H^{s,2}(\R)=H^s(\R)$, 
see \cite{LiPo}. 
However,
\begin{equation}\label{peakon-sobolev}
\phi(x)=e^{-|x|}\in W^{1,\infty}(\R),
\end{equation}
where $W^{1,\infty}(\R)$ denotes the space of Lipschitz functions.

In \cite{CoMo} it was proved that if  $u_0\in H^1(\R)$ with $u_0-\partial_x^2u_0\in \mathcal {M}^+(\R)$
(the set of positive Radon measures with bounded total variation),
then the  IVP for the CH equation \eqref{CH0} has a unique solution 
$$
u\in C([0,\infty):H^1(\R))\cap C^1((0,\infty):L^2(\R))
$$
 satisfying that
$ y(t)\equiv (1-\partial_x^2)u(\cdot,t)\in \mathcal {M}^+(\R)$ is uniformly bounded.

In \cite{XZ}  the existence of a $H^1$-global weak solution for the IVP for the CH equation \eqref{CH} for 
data $u_0\in H^1(\R)$ was established. 

In \cite{CoEs1} and \cite{CoEs2} (see also \cite{LO}) conditions on the data $u_0\in H^3(\R)$ were derived to guarantee that the
corresponding  local solution $u\in C([0,T]:H^3(\R))$ of the IVP associated to the CH equation \eqref{CH0} blows up in finite time, i.e. 
$$
\lim_{t\uparrow T}\|\partial_xu(\cdot,t)\|_{\infty}=\infty,
$$
which corresponds to the breaking of waves.

 Formally, one has that $H^1$-solutions of the CH equation \eqref{CH0}  satisfy the conservation law
$$
E(u)(t)=\int_{-\infty}^{\infty}\big(u^2+(\partial_xu)^2\big)(x,t)\,dx= E(u_0),
$$
so that the $H^1$-norm of the solutions constructed in  \cite{LiPoSi} (see Theorem \ref{CH3} below) remains invariant 
within the existence interval.
 
In \cite{BC} and \cite{BCZ}  the existence and uniqueness, 
 respectively,  of a $H^1$ global solution for the CH equation \eqref{CH} was established.

 For other well-posedness results we refer to  \cite{GHR1}, \cite{GHR2}  and references  therein.
\vskip.1in

Concerning asymptotic at infinity UCP for solutions of the CH equation, we recall two theorems obtained in \cite{HMPZ} :

\vskip.1in

\begin{theorem}[\cite{HMPZ}] \label{CH1}  Assume that for some $T>0$ and $s>3/2$,
\[
u\in C([0,T]: H^s(\R))\cap C^{1}((0,T):H^{s-1}(\R))
\]
is a strong solution of the IVP associated to the CH equation \eqref{CH0}. If for some $\alpha\in(1/2,1)$, $u_0(x)=u(x,0)$ satisfies  
\begin{equation}\label{h1}
|u_0(x)|= o(e^{-x})\;\;\;\;\text{and}\;\;\;\;|\partial_xu_0(x)|=  O(e^{-\alpha x}),\;\;\;\;\text{as}\;\;\;x\uparrow \infty,
\end{equation}
and there exists $t_1\in (0,T]$ such that 
\begin{equation}\label{h2}
|u(x,t_1)|= o(e^{-x}),\;\;\;\;\;\;\text{as}\;\;\;x\uparrow \infty,
\end{equation}
then $u\equiv 0$.
\end{theorem}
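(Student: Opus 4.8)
The plan is to use the "Lagrangian flow" rewriting of the Camassa--Holm equation, which converts the decay hypotheses at $x\uparrow\infty$ into a differential inequality along characteristics, and then combine it with the nonlocal structure of the source term $P\equiv \partial_x(1-\partial_x^2)^{-1}\big(u^2+\tfrac12(\partial_xu)^2\big)$, whose convolution kernel is $\tfrac12 e^{-|x|}$. First I would introduce the flow $q(x,t)$ solving $\partial_t q(x,t)=u(q(x,t),t)$, $q(x,0)=x$, which is a diffeomorphism of $\R$ for each $t$, and record the standard fact that along this flow the equation $\partial_t u+u\partial_x u=-P$ reads $\frac{d}{dt}u(q(x,t),t)=-P(q(x,t),t)$. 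Because $P=\tfrac12 e^{-|\cdot|}\ast\big(u^2+\tfrac12(\partial_xu)^2\big)\ge 0$ pointwise, one gets the crucial sign information: $u$ is non-increasing along characteristics. The next step is to quantify the decay: writing $P$ and $\partial_x P=\tfrac12(\sgn)e^{-|\cdot|}\ast(\cdots)$ out explicitly, one shows that the hypotheses $|u_0|=o(e^{-x})$, $|\partial_x u_0|=O(e^{-\alpha x})$ with $\alpha\in(1/2,1)$ force $e^{x}P(x,0)$ and $e^x\partial_x P(x,0)$ to tend to a common limit as $x\uparrow\infty$, namely a multiple of $\int e^{y}\big(u_0^2+\tfrac12(\partial_xu_0)^2\big)\,dy$; I would track the persistence of a weaker one-sided decay ($u(x,t)$, $\partial_x u(x,t)$ both $O(e^{-\alpha x})$) for $t\in[0,T]$ via a Gronwall argument on the flow, using $\|u\|_{L^\infty_TH^s}<\infty$ to control $\|u\|_\infty,\|\partial_x u\|_\infty$.

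With the decay in hand, I would run the standard EKPV-type functional: set $N_\pm(t)=\limsup_{x\uparrow\infty}e^{x}\big(u\pm\partial_x u\big)(x,t)$ or, more robustly, track $\int_{x_0}^\infty e^{x}\big(u^2+(\partial_xu)^2\big)(x,t)\,dx$ with $x_0$ large, differentiate in $t$, and integrate by parts using the equation. The key cancellation is that the nonlinear and nonlocal terms, after integration by parts against the exponential weight $e^{x}$ and using that $e^x$ is (up to sign) the kernel's eigenfunction for $(1-\partial_x^2)^{-1}$, produce no bad contribution; the conservation-law-type identity $E(u)(t)=E(u_0)$ keeps the total mass bounded, so the weighted quantity cannot concentrate. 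The hypothesis \eqref{h2} at the single time $t_1$ — that $u(\cdot,t_1)$ decays strictly faster than $e^{-x}$, i.e. the weighted limit vanishes at $t_1$ — then forces the coefficient $\int e^{y}\big(u^2+\tfrac12(\partial_x u)^2\big)(y,t)\,dy$ governing $P$ to vanish for all $t$, which (since the integrand is $\ge0$ and $e^y>0$) means $u^2+\tfrac12(\partial_xu)^2\equiv 0$ on the half-line $[x_0,\infty)$, hence $u\equiv 0$ there. A final local unique-continuation step (propagation across $\R$, e.g. via the transport structure of $\partial_tu+u\partial_xu=-P$ and the already-vanishing source, or directly invoking an analyticity/ODE argument on the characteristic through a zero) upgrades this to $u\equiv 0$ on $\R\times[0,T]$.

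The main obstacle I anticipate is the interplay between the two different decay rates in the hypotheses: the scalar $u_0$ is assumed to decay like $o(e^{-x})$ but its derivative only like $O(e^{-\alpha x})$ with $\alpha<1$, so $\partial_xu$ decays strictly slower than $u$, and one must check carefully that this slower rate is still enough — first, to make all the boundary terms at $+\infty$ from the integration by parts vanish (the weight is $e^{x}$, the data decays like $e^{-\alpha x}$ with $\alpha>1/2$, so $e^{x}u\partial_xu\sim e^{(1-2\alpha)x}\to 0$ exactly because $\alpha>1/2$: this is where the threshold $1/2$ enters), and second, to ensure the nonlocal term $P$ and its derivative, which involve $(\partial_xu)^2$ convolved against $e^{-|\cdot|}$, genuinely decay like $e^{-x}$ and not slower (here one uses $2\alpha>1$ again, so $e^{-2\alpha x}$ is integrable against the kernel and the tail is dominated by the $e^{-x}$ coming from the kernel itself). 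Propagating the intermediate decay $O(e^{-\alpha x})$ forward in time for $t\le t_1$ without losing the rate is the technical heart; after that, the rigidity argument is the now-standard weighted-energy/conservation-law scheme.
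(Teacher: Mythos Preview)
Your proposal contains a concrete error and takes an unnecessarily circuitous route. You define $P=\partial_x(1-\partial_x^2)^{-1}\big(u^2+\tfrac12(\partial_xu)^2\big)$ but then assert $P=\tfrac12 e^{-|\cdot|}\ast\big(u^2+\tfrac12(\partial_xu)^2\big)\ge 0$; that formula is for $(1-\partial_x^2)^{-1}$, not for its $x$-derivative. In fact $P(x)=-\tfrac12\,\mathrm{sgn}(\cdot)e^{-|\cdot|}\ast f(x)$ with $f=u^2+\tfrac12(\partial_xu)^2$, which has no sign, so the claim that $u$ is non-increasing along characteristics is false and the Lagrangian monotonicity step collapses. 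The weighted-energy differentiation you sketch afterwards may be salvageable, but you never actually carry it out, and the ``EKPV-type functional'' is not the mechanism in \cite{HMPZ}.

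The actual argument is shorter and purely stationary in the limit. First one proves (this is Theorem~\ref{CH2}) that the decay $|u|,|\partial_xu|=O(e^{-\alpha x})$ persists uniformly on $[0,T]$; you correctly identify this step. Then one writes the \emph{integral} form
\[
u(x,t_1)=u_0(x)-\int_0^{t_1}\!(u\partial_xu)(x,\tau)\,d\tau-\int_0^{t_1}\!P(x,\tau)\,d\tau,
\]
multiplies by $e^{x}$, and lets $x\uparrow\infty$. The hypotheses \eqref{h1}--\eqref{h2} kill the left side and the first term; since $2\alpha>1$ the term $e^{x}u\partial_xu=O(e^{(1-2\alpha)x})\to0$ (this is indeed where $\alpha>1/2$ enters, as you note). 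For $P$ one splits the convolution and finds $e^{x}P(x,\tau)\to -\tfrac12\int_{\mathbb R}e^{y}f(y,\tau)\,dy$, so the identity forces
\[
\int_0^{t_1}\!\int_{\mathbb R} e^{y}\Big(u^2+\tfrac12(\partial_xu)^2\Big)(y,\tau)\,dy\,d\tau=0.
\]
Since the integrand is nonnegative this gives $u\equiv 0$ on $\mathbb R\times[0,t_1]$ directly --- no half-line restriction and no local unique-continuation step are needed; uniqueness for the IVP then extends this to $[0,T]$. Your intuition that the limiting coefficient $\int e^{y}f\,dy$ is the decisive quantity is right, but the route to it is the Duhamel identity, not characteristics or a differentiated energy.
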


\begin{remark}\label{rem-dec-ch}\hskip10pt

\begin{enumerate} 

\item

The conclusion of Theorem \ref{CH1} still holds with the decay in \eqref{h1}-\eqref{h2} assumed for  $x\downarrow -\infty$.

\item Roughly, Theorem \ref{CH1} affirms that a non-trivial solution of the CH equation can decay faster at two times than peakon (soliton) solution \eqref{peakon}. This is not the case of the k-gKdV equation, k-gBO equation and the dgBO equation (section 6). In view of the soliton resolution conjecture this should be the case for very large values of the time $t$, see the comments at the end of the section 2.

\end{enumerate}
\end{remark}
The next result shows that Theorem \ref{CH1} is optimal:

\begin{theorem}[\cite{HMPZ}] \label{CH2}  Assume that for some $T>0$ and $s>3/2$,
\[
u\in C([0,T]: H^s(\R))\cap C^{1}((0,T):H^{s-1}(\R))
\]
is a strong solution of the IVP associated to the CH equation \eqref{CH0}. If 
for some $\theta\in(0,1)$,
$u_0(x)=u(x,0)$ satisfies 
\[
|u_0(x)|,\;\;\;\;|\partial_xu_0(x)|=  O(e^{-\theta x}),\;\;\;\;\text{as}\;\;\;x\uparrow \infty,
\]
then
\[
|u(x,t)|,\;\;\;\;|\partial_xu(x,t)|=  O(e^{-\theta x}),\;\;\;\;\text{as}\;\;\;x\uparrow \infty,
\]
uniformly in the time interval $[0,T]$.
\end{theorem}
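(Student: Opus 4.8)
The plan is to work with the nonlocal formulation \eqref{CH0} and to estimate the quantities $M_1(t)=\sup_x e^{\theta x}|u(x,t)|$ and $M_2(t)=\sup_x e^{\theta x}|\partial_x u(x,t)|$, or rather suitably truncated versions of them so that all manipulations are legitimate. First I would fix, for $N\in\mathbb Z^+$, a bounded weight $\varphi_N(x)$ that behaves like $e^{\theta x}$ for $x\le N$ (say $\varphi_N(x)=\min\{e^{\theta x}, e^{\theta N}\}$, smoothed), with the two standard properties $|\varphi_N'|\le \theta\varphi_N$ and $\varphi_N(x-y)\le \varphi_N(x)e^{\theta|y|}$. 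One then multiplies the equation for $u$ and for $v:=\partial_x u$ by $\varphi_N\,|u|^{p-2}u$ and $\varphi_N\,|v|^{p-2}v$ respectively, integrates, and sends $p\to\infty$ to obtain differential inequalities for $F_i(t):=\|\varphi_N\,w_i(\cdot,t)\|_\infty$ with $w_1=u$, $w_2=\partial_x u$. The key point is that constants must be independent of $N$; at the end one lets $N\to\infty$ and uses monotone convergence, invoking the hypothesis on $u_0$ to guarantee $F_i(0)$ is finite and bounded uniformly in $N$ by $\|e^{\theta x}u_0\|_\infty+\|e^{\theta x}\partial_xu_0\|_\infty<\infty$.

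The heart of the matter is controlling the nonlocal term $G(u):=\partial_x(1-\partial_x^2)^{-1}\bigl(u^2+\tfrac12(\partial_x u)^2\bigr)$ and its $x$-derivative against the weight. I would use the explicit kernel: $(1-\partial_x^2)^{-1}h = \tfrac12 e^{-|\cdot|}\ast h$, so that $G(u)=\tfrac12\,\mathrm{sgn}(\cdot)\,(-e^{-|\cdot|})\ast(\cdots)\,$-type convolution (more precisely $\partial_x(1-\partial_x^2)^{-1}h=-\tfrac12\,\mathrm{sgn}(x)e^{-|x|}\ast h$), and $\partial_x G(u) = -(1-\partial_x^2)^{-1}\bigl(u^2+\tfrac12 v^2\bigr)+\bigl(u^2+\tfrac12 v^2\bigr)$, both of which are convolutions of $e^{-|\cdot|}$ against $u^2+\tfrac12 v^2$ plus a local term. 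Then $\varphi_N(x)|e^{-|\cdot|}\ast g|(x)\le \bigl(\int e^{-|y|}e^{\theta|y|}\,dy\bigr)\,\|\varphi_N\, g\|_\infty = \tfrac{2}{1-\theta^2}\,\|\varphi_N g\|_\infty$, and since $\theta\in(0,1)$ this integral converges — this is exactly where the restriction $\theta<1$ is used. Applying this with $g=u^2+\tfrac12 v^2$ and using $\|\varphi_N(u^2+\tfrac12 v^2)\|_\infty\le \bigl(\|u\|_\infty+\tfrac12\|v\|_\infty\bigr)\max\{F_1,F_2\}$, together with the a priori bound $\sup_{[0,T]}\|u(\cdot,t)\|_{H^s}<\infty$ (hence $\|u\|_\infty,\|\partial_xu\|_\infty$ bounded on $[0,T]$ since $s>3/2$), yields $\|\varphi_N G(u)\|_\infty + \|\varphi_N \partial_x G(u)\|_\infty\le C(T)\,\max\{F_1(t),F_2(t)\}$.

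Assembling the pieces: from $\partial_t u = -u\partial_x u - G(u)$ one gets, after the $p\to\infty$ limit, $\tfrac{d}{dt}F_1 \le C(T)\bigl(F_1+F_2\bigr)$ (the $u\partial_x u$ term gives $\|u\|_\infty F_2$, plus a term $\theta\|u\|_\infty F_1$ coming from $\varphi_N'$ when one differentiates and integrates by parts in the $u v \partial_x\varphi_N$ piece); similarly, differentiating the equation once in $x$, $\partial_t v = -v^2 - u\partial_x v - \partial_x G(u)$, and the same scheme gives $\tfrac{d}{dt}F_2\le C(T)\bigl(F_1+F_2\bigr)$. Adding, $\tfrac{d}{dt}(F_1+F_2)\le 2C(T)(F_1+F_2)$, so Gronwall gives $F_1(t)+F_2(t)\le e^{2C(T)t}\bigl(F_1(0)+F_2(0)\bigr)$ uniformly for $t\in[0,T]$ and, crucially, uniformly in $N$. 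Letting $N\to\infty$ gives $\sup_{[0,T]}\bigl(\|e^{\theta x}u(\cdot,t)\|_\infty+\|e^{\theta x}\partial_xu(\cdot,t)\|_\infty\bigr)<\infty$, which is precisely the claimed $O(e^{-\theta x})$ decay, uniform on $[0,T]$. The main obstacle I anticipate is making the $p\to\infty$ weighted-$L^p$ energy step fully rigorous — in particular justifying the integrations by parts and the interchange of limits with the bounded truncated weight $\varphi_N$, and checking that no constant secretly depends on $N$; the nonlocal-term estimate itself is elementary once the kernel is written out, and it is only there that $\theta<1$ enters.
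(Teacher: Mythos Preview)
The paper does not contain a proof of Theorem~\ref{CH2}; it is only stated, with a citation to \cite{HMPZ}, as the sharpness counterpart to Theorem~\ref{CH1}. Your outline is essentially the argument carried out in \cite{HMPZ}: truncated weights $\varphi_N$ with $|\varphi_N'|\le\theta\varphi_N$ and $\varphi_N(x)\le e^{\theta|y|}\varphi_N(x-y)$, the explicit kernel $\tfrac12 e^{-|\cdot|}$ for $(1-\partial_x^2)^{-1}$, the weighted convolution bound $\|\varphi_N(e^{-|\cdot|}\ast g)\|_\infty\le c_\theta\|\varphi_N g\|_\infty$ (finite exactly because $\theta<1$), and Gronwall on $F_1+F_2$ with constants independent of $N$.

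Two small remarks. First, the constant in the convolution estimate is $\int e^{-(1-\theta)|y|}\,dy=2/(1-\theta)$, not $2/(1-\theta^2)$; this is harmless. Second, the $L^p\to L^\infty$ machinery is not really needed here: since $\varphi_N$ is time-independent you can write the Duhamel form $\varphi_N u(x,t)=\varphi_N u_0(x)-\int_0^t\varphi_N\bigl(u\partial_x u+G(u)\bigr)(x,s)\,ds$ and take the supremum in $x$ directly, bounding $\|\varphi_N u\partial_x u\|_\infty\le\|\partial_x u\|_\infty F_1$ (or $\|u\|_\infty F_2$) and similarly for the $v$-equation. This bypasses the integration-by-parts and limit-interchange issues you flag at the end and is in fact closer to how \cite{HMPZ} organizes the estimate. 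Either route works; your identification of where $\theta<1$ enters and why constants stay $N$-independent is correct.
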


\begin{remark} \hskip10pt

\begin{enumerate}

\item
We observe the UCP in Theorem \ref{CH1} is restricted to the case where $u_2\equiv 0$. A result of this kind for two arbitrary solutions $u_1,\,u_2$ of the CH equation \eqref{CH0} is \underline{unknown}. 

\item

In \cite{LiPoSi}  Theorem \ref{CH1} and Theorem \ref{CH2} were extended to the solutions of the CH equation \eqref{CH0} obtained in the following class which contains the peakons solutions \eqref{peakon}:

\end{enumerate}
\end{remark}

\vskip.1in

\begin{theorem}[\cite{LiPoSi}] \label{CH3}

Given  $u_0\in H^1(\mathbb R)\cap W^{1,\infty}(\mathbb R)\equiv X$ there exist $T=T(\|u_0\|_X)>0$ and a unique solution $u=u(x,t)$ of the IVP associated to the CH equation \eqref{CH0}
such that
$$
u\in \, C([-T,T]:H^1(\mathbb R))
  \cap L^{\infty}([-T,T]:W^{1,\infty}(\mathbb R)),
$$
with
$$
\sup_{[-T,T]}\|u(t)\|_X=\sup_{[-T,T]}(\|u(t)\|_{1,2}+\|u(t)\|_{1,\infty})\leq 2C\|u_0\|_X,
$$
for some universal constant $C>0$.

Moreover, given $R>0$, the map $u_0\mapsto u$, taking the data to the solution, is continuous from the ball $\{u_0\in X :\|u_0\|_X\le R\}$
into $C([-T(R),T(R)]:H^1(\R))$.
\end{theorem}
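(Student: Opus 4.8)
The plan is to run a standard iteration/fixed-point argument, but in the Banach space $X=H^1(\R)\cap W^{1,\infty}(\R)$ rather than in $H^s$ with $s>3/2$, exploiting the quasilinear transport structure of \eqref{CH0}. First I would rewrite \eqref{CH0} in the nonlocal form $\p_tu+u\p_xu=-\p_x(1-\p_x^2)^{-1}\big(u^2+\tfrac12(\p_xu)^2\big)=:F(u)$, and observe that the operator $\p_x(1-\p_x^2)^{-1}$ has a convolution kernel in $L^1(\R)$ (namely $-\tfrac12\,\mathrm{sgn}(x)e^{-|x|}$), so that $F:X\to X$ is locally Lipschitz: one controls $\|F(u)\|_{1,2}$ by $\|u^2+\tfrac12(\p_xu)^2\|_{1}$-type quantities together with $\|u\|_{1,2}^2$, and $\|F(u)\|_{1,\infty}$ by $\|u\|_{1,\infty}^2$ and $\|u\|_{1,2}^2$, using that both $(1-\p_x^2)^{-1}$ and $\p_x(1-\p_x^2)^{-1}$ map $L^1\cap L^\infty$ boundedly into $W^{1,\infty}$ and into $H^1$. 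The key structural point, exactly as in the DiPerna–Lions / Camassa–Holm literature, is that the right-hand side $F(u)$ does not lose derivatives: a peakon has $\p_xu\in L^\infty$, and $(\p_xu)^2\in L^1\cap L^\infty$ is then acceptable.

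Next I would set up the flow map: for $u$ fixed, let $\eta=\eta(t,x)$ solve $\dot\eta=u(t,\eta)$, $\eta(0,x)=x$; since $u\in W^{1,\infty}$, $\eta(t,\cdot)$ is a bi-Lipschitz homeomorphism of $\R$ with Jacobian bounds $e^{-t\|\p_xu\|_\infty}\le \p_x\eta\le e^{t\|\p_xu\|_\infty}$. Along characteristics \eqref{CH0} becomes the ODE $\tfrac{d}{dt}u(t,\eta(t,x))=F(u)(t,\eta(t,x))$, and differentiating once in $x$ gives a closed system for $u$ and $v:=\p_xu$ along the flow, the equation for $v$ being $\tfrac{d}{dt}v(t,\eta)=-\,v^2/2 \;+\;u^2\;-\;(1-\p_x^2)^{-1}\big(u^2+\tfrac12 v^2\big)$ evaluated along $\eta$ (the $-v^2/2$ coming from $\p_x^2(1-\p_x^2)^{-1}=-(1-(1-\p_x^2)^{-1})$ acting on $u^2+\tfrac12v^2$, minus the extra $(\p_xu)(\p_x^2 u)$ term already absorbed). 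I would then define the iteration $u^{(n+1)}$ by transporting the data with the flow of $u^{(n)}$ and integrating these ODEs, and show that on a time interval $[-T,T]$ with $T\sim 1/\|u_0\|_X$ the map stays inside the ball of radius $2C\|u_0\|_X$ in $C([-T,T]:X)$ and is a contraction in the weaker norm $C([-T,T]:H^1)$ (or $C([-T,T]:L^2\times L^2)$ for $(u,v)$), which simultaneously yields existence, uniqueness, and the quantitative bound in the statement. The a priori estimate is the Grönwall inequality $\tfrac{d}{dt}\|u\|_X\lesssim \|u\|_X^2$, obtained by combining the $H^1$ energy estimate (where the transport term is antisymmetric up to $\|\p_xu\|_\infty\|u\|_{1,2}^2$) with the pointwise ODE bounds for $\|u\|_{1,\infty}$ read off along characteristics.

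Finally, for continuous dependence I would take two solutions $u_1,u_2$ with data in the ball of radius $R$, write the equation for $w=u_1-u_2$ and for $\p_xw$, and estimate $\|w(t)\|_{1,2}$ by Grönwall: the transport commutator, the difference of the nonlocal terms, and the difference of the two flows are all controlled Lipschitz-continuously in terms of $\sup_{[-T,T]}\|u_i\|_X\le 2CR$, giving $\sup_{[-T(R),T(R)]}\|w(t)\|_{1,2}\le C_R\|u_1(0)-u_2(0)\|_{1,2}$, hence continuity of the data-to-solution map from the $R$-ball of $X$ into $C([-T(R),T(R)]:H^1(\R))$.

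The main obstacle is the iteration in $X$: because the problem is genuinely quasilinear, one cannot contract in $X$ itself (the high norm would need to see one more derivative of the coefficient $u$ in the transport term), so the argument must run the uniform bound in the strong norm $X$ and the contraction in the weak norm $H^1$ separately, and then reconcile them — the delicate point being to verify that the bound $\sup_{[-T,T]}\|u(t)\|_X\le 2C\|u_0\|_X$ is preserved by the iteration map, i.e.\ that the constant $2C$ does not degrade, which forces the choice $T\sim (C\|u_0\|_X)^{-1}$ and a careful bookkeeping of the constants coming from the Jacobian bounds $e^{\pm T\|\p_xu\|_\infty}$.
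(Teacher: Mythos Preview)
The paper does not prove Theorem \ref{CH3}: it is a survey, and this result is simply quoted from \cite{LiPoSi} with no proof sketch given here (only Theorems \ref{BOloc}, \ref{ILWloc}, and \ref{CH4} receive proof sketches in the present paper). So there is no ``paper's own proof'' to compare against.

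That said, your outline is essentially the strategy of \cite{LiPoSi}: pass to Lagrangian coordinates via the flow $\dot\eta=u(t,\eta)$, derive the closed ODE system for $(u,v)=(u,\partial_xu)$ along characteristics (your formula $\frac{d}{dt}v=-\tfrac12 v^2+u^2-(1-\partial_x^2)^{-1}(u^2+\tfrac12 v^2)$ is correct), and use that the nonlocal term is given by convolution with an $L^1$ kernel so no derivatives are lost. The existence/uniqueness and the a priori bound $\|u(t)\|_X\le 2C\|u_0\|_X$ for $T\sim \|u_0\|_X^{-1}$ then come out of ODE theory in a Banach space together with the Jacobian bounds on $\partial_x\eta$.

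One point to be careful about: your continuous-dependence argument concludes a Lipschitz estimate $\sup_t\|w(t)\|_{1,2}\le C_R\|w(0)\|_{1,2}$, which is stronger than what the theorem asserts (mere continuity from the $X$-ball into $C_tH^1$). In the Lagrangian picture the difference of two flows $\eta_1-\eta_2$ is controlled by $\|u_1-u_2\|_{L^\infty}$, not by $\|u_1-u_2\|_{H^1}$ alone, so an $H^1$-to-$H^1$ Lipschitz bound does not fall out directly; the argument in \cite{LiPoSi} instead approximates by smooth data, uses the classical $H^s$ theory ($s>3/2$) for the approximants together with the uniform $X$-bounds, and passes to the limit. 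You should either switch to that Bona--Smith type argument or make explicit why the $W^{1,\infty}$ control available on the $X$-ball suffices to close your Gr\"onwall inequality for $\|w\|_{1,2}$.
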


The periodic version of this theorem was previously established  in \cite{LKT}.

 The CH equation \eqref{CH0} is a member of the   \it b-family of equations \rm:
\begin{equation}
\label{eqb}
\partial_tu+u\partial_x u +\,\partial_x(1-\partial_x^2)^{-1}\Big(\,\frac{b}{2}u^2  +\frac{3-b}{2} (\partial_xu)^2\Big)=0,\;\,b\in[0,3].
\end{equation}
For $b=2$ one gets the CH equation and for  $b=3$ one obtains the Degasperis-Procesi (DP) equation \cite{DP}, the only bi-hamiltonian and integrable models in this family, see \cite{EY}, \cite{DP}.
 \begin{remark}\label{ext}
The results in Theorem \ref{CH1} and Theorem \ref{CH2} extend directly to all the members in the b-family, see \cite{Hen}. The same applies to Theorem \ref{CH3}, and its periodic version  in \cite{LKT}.
\end{remark}

Concerning the local UCP for solutions of the IVP for the CH equation \eqref{CH0}, in fact for all $b$-equations in \eqref{eqb}, we have: 
  
\begin{theorem}[\cite{LP20}] \label{CH4}  

Let   $u=u(x,t)$
be the solution of the IVP for  an equation in \eqref{eqb} provided by Theorem \ref{CH3} (see Remark \ref{ext}).
If there exist an open interval $I \subset \mathbb R$ and a time $t_1\in [0,T]$ such that 
$$
u(x,t_1)=\partial_tu(x,t_1)=0,\;\;\;\;\;\;\;x\in I,
$$
then $\,u\equiv 0$.

In particular, if there exists an open set $\Omega \subset \mathbb R\times [0,T]$ such that 
$$
u(x,t)=0\;\;\;\;\;\;(x,t)\in \Omega,
$$
then $\,u\equiv 0$.
\end{theorem}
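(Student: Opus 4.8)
The plan is to mimic the structure of the proofs of Theorems \ref{BOloc} and \ref{ILWloc}, replacing the analytic-extension argument for the Hilbert transform (or the operator $\mathcal L_\delta\partial_x$) by a real-variable elliptic-type uniqueness statement for the non-local operator $\partial_x(1-\partial_x^2)^{-1}$ appearing in \eqref{eqb}. Write $w=u$ (here the second solution is $u_2\equiv0$) and use the equation in the form \eqref{CH0}, or its $b$-analogue \eqref{eqb}, namely $\partial_tu=-u\partial_xu-\partial_x(1-\partial_x^2)^{-1}\big(\tfrac{b}{2}u^2+\tfrac{3-b}{2}(\partial_xu)^2\big)$. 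The hypothesis $u(x,t_1)=\partial_tu(x,t_1)=0$ for $x\in I$ then forces, on the interval $I$,
$$
\partial_x(1-\partial_x^2)^{-1}\Big(\tfrac{b}{2}u^2+\tfrac{3-b}{2}(\partial_xu)^2\Big)(x,t_1)=0,\qquad x\in I,
$$
since $u\partial_xu=\tfrac12\partial_x(u^2)$ vanishes on $I$ at time $t_1$ (because $u(\cdot,t_1)\equiv0$ on $I$, hence so does its $x$-derivative in the interior of $I$).

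The key step is a lemma analogous to Lemma \ref{lemma1}: if $g=g(x)\ge 0$ is a suitable function (here $g=\tfrac{b}{2}u^2+\tfrac{3-b}{2}(\partial_xu)^2\ge0$ for $b\in[0,3]$) and $v=(1-\partial_x^2)^{-1}g$ solves $v-\partial_x^2v=g$, then the two conditions $\partial_xv\equiv0$ on $I$ and (automatically) $g\equiv 0$ on $I$ should propagate to force $g\equiv0$ on all of $\mathbb R$, whence $u(\cdot,t_1)\equiv\partial_xu(\cdot,t_1)\equiv0$. Concretely, $v$ solves a second-order ODE $v''=v-g$; on $I$ we know $v'\equiv0$ and $g\equiv0$, so $v$ is constant on $I$, say $v=v_0$, and then $v''=v_0$ on $I$ forces $v_0=0$; thus $v\equiv0$ and $v'\equiv0$ at any interior point $x_0\in I$. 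Using the explicit kernel $(1-\partial_x^2)^{-1}h(x)=\tfrac12\int e^{-|x-y|}h(y)\,dy$, one computes $v(x_0)=\tfrac12\int e^{-|x_0-y|}g(y)\,dy$ and $v'(x_0)=\tfrac12\big(\int_{y<x_0}e^{-(x_0-y)}g(y)\,dy-\int_{y>x_0}e^{-(y-x_0)}g(y)\,dy\big)$; from $v(x_0)=v'(x_0)=0$ one gets $\int_{y<x_0}e^{y}g(y)\,dy=\int_{y>x_0}e^{-y}g(y)\,dy=0$, and since $g\ge0$ and the exponential weights are strictly positive, $g\equiv0$ on $\mathbb R$. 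This is the real-variable substitute for Proposition \ref{prop1}, and it is cleaner because the sign of $g$ (valid precisely for $b\in[0,3]$) does the work that analyticity did for BO and ILW.

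With $g(\cdot,t_1)\equiv0$ we obtain $u(\cdot,t_1)\equiv0$ on $\mathbb R$ (from $\partial_xu(\cdot,t_1)\equiv0$ and $u(\cdot,t_1)\in H^1$, or directly from $u^2\equiv0$ when $b<3$; for the endpoint $b=3$ use instead that $(\partial_xu)^2\equiv0$ so $u$ is constant and in $H^1$, hence $0$). Then $u(\cdot,t_1)\equiv0$ is admissible Cauchy data for the well-posed IVP of Theorem \ref{CH3}, and uniqueness (in the class $C([-T,T]:H^1)\cap L^\infty([-T,T]:W^{1,\infty})$, together with the reversibility of the flow) gives $u\equiv0$ on $\mathbb R\times[0,T]$. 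The last assertion, that vanishing on an open space-time set $\Omega$ implies $u\equiv0$, follows by choosing any $t_1$ with $[a,b]\times\{t_1\}\subset\Omega$: then $u(\cdot,t_1)\equiv0$ on $(a,b)$ and, since $u$ vanishes on a neighborhood of $\{t_1\}\times[a,b]$, also $\partial_tu(\cdot,t_1)\equiv0$ there, so the first part applies.

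The main obstacle I anticipate is the low regularity: Theorem \ref{CH3} only gives $u\in H^1\cap W^{1,\infty}$ in $x$, and $\partial_tu$ is a priori only in the dual scale, so the pointwise statement $\partial_tu(x,t_1)=0$ on $I$ and the manipulations above must be justified — either by working with the mild/integral formulation and the explicit convolution kernel $e^{-|x|}$ (which maps $L^1\to W^{1,\infty}$, so $v$ and $v'$ are genuinely continuous), or by a density/approximation argument as indicated after Lemma \ref{lemma1}. One must also check that $\partial_x(1-\partial_x^2)^{-1}(u\partial_xu)$ does not spoil the argument: writing $u\partial_xu=\tfrac12\partial_x(u^2)$ and commuting the outer $\partial_x$ through gives $\tfrac12(1-\partial_x^2)^{-1}\partial_x^2(u^2)=\tfrac12\big((1-\partial_x^2)^{-1}-\mathrm{Id}\big)(u^2)$, which reorganizes the whole nonlinear term into $(1-\partial_x^2)^{-1}$ applied to a nonnegative function minus a local term vanishing on $I$; this is exactly the reduction that makes the sign argument go through and should be done carefully once, uniformly in $b\in[0,3]$.
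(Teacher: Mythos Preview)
Your proposal is correct and follows the paper's strategy: use the hypothesis together with the equation to force $\partial_x(1-\partial_x^2)^{-1}g(\cdot,t_1)=0$ on $I$ with $g=\tfrac{b}{2}u^2+\tfrac{3-b}{2}(\partial_xu)^2\ge 0$, then exploit the explicit kernel $\tfrac12 e^{-|x|}$ and the sign of $g$ to conclude $g\equiv 0$ on $\mathbb R$. The paper executes the last step slightly differently: it sets $F=\partial_x(1-\partial_x^2)^{-1}f$ and compares $F(\beta)\ge F(\alpha)$ at the two endpoints of $[\alpha,\beta]\subset I$ via the pointwise kernel inequality, with equality forcing $f\equiv0$; you instead insert a short ODE argument ($v''=v-g$, $v'=0$ and $g=0$ on $I$ $\Rightarrow$ $v\equiv0$ on $I$) and then read off $g\equiv0$ from $v(x_0)=0$ at a single point. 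Both are the same positivity idea, and your variant is arguably a touch cleaner (indeed $v(x_0)=0$ alone suffices, making the $v'(x_0)=0$ splitting redundant).

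Two small corrections: your endpoint cases are swapped --- at $b=3$ one has $g=\tfrac32 u^2$, so $g\equiv0$ gives $u(\cdot,t_1)\equiv0$ directly, while at $b=0$ one has $g=\tfrac32(\partial_xu)^2$, giving $\partial_xu\equiv0$ and hence $u\equiv0$ via $u\in H^1$. Also, your closing worry about $\partial_x(1-\partial_x^2)^{-1}(u\partial_xu)$ is unnecessary: in \eqref{eqb} the term $u\partial_xu$ is the \emph{local} transport term, not inside the nonlocal operator, and it vanishes on $I$ at $t_1$ straight from the hypothesis.
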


The result in Theorem \ref{CH4} extends to solution of the IPBVP :

\begin{theorem} [\cite{LP20}]\label{CH5}
Let   $u=u(x,t)$                               
be the local solution of the IPBVP for an equation in \eqref{eqb} found in \cite{LKT}, (see Remark \ref{ext} (ii)).
If there exist an open interval $\,I\subset \mathbb S$ and a time $t_1\in[0,T]$ such that 
$$
u(x,t_1)=\partial_tu(x,t_1)=0,\;\;\;\;\;\;\;x\in I,
$$
then $\,u\equiv 0$.

In particular, if there exists an open set $\Omega \subset \mathbb R\times [0,T]$ such that 
$$
u(x,t)=0\;\;\;\;\;\;(x,t)\in \Omega,
$$
then $\,u\equiv 0$.
\end{theorem}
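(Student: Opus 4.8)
The plan is to mimic the proof of Theorem \ref{CH4} in \cite{LP20}, adapting the stationary complex-analytic argument (Proposition \ref{prop1} / Lemma \ref{lemma1} style) to the periodic setting. The crucial point is that the nonlocal operator in the $b$-equation, namely $(1-\partial_x^2)^{-1}$, is on the circle $\mathbb S = \mathbb R/2\pi\mathbb Z$ still a smoothing operator with a completely explicit convolution kernel, the periodized version of $\tfrac12 e^{-|x|}$. Thus if $u$ vanishes on the open interval $I$ at time $t_1$, writing the equation in the form \eqref{eqb} and evaluating it on $I$ gives, using $u(x,t_1)=\partial_x u(x,t_1)=\partial_x^2 u(x,t_1)=0$ for $x\in I$ (which follow from $u(\cdot,t_1)\equiv 0$ on the open set $I$) and $\partial_t u(x,t_1)=0$ on $I$, the identity
\[
\partial_x (1-\partial_x^2)^{-1}\Big(\tfrac{b}{2}u^2+\tfrac{3-b}{2}(\partial_x u)^2\Big)(x,t_1)=0,\qquad x\in I.
\]
So the nonnegative (for $b\in[0,3]$ the combination $\tfrac b2 u^2+\tfrac{3-b}2(\partial_x u)^2$ is a nonnegative measure once we know $u(\cdot,t_1)\in W^{1,\infty}$) function $g=\tfrac b2 u^2+\tfrac{3-b}2(\partial_x u)^2$ has the property that $\partial_x(1-\partial_x^2)^{-1}g$ vanishes on $I$.

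The second step is the periodic analogue of the reflection/vanishing lemma. Set $h=(1-\partial_x^2)^{-1}g$, which solves $h-\partial_x^2 h=g\ge 0$ on $\mathbb S$; on $I$ we have $\partial_x h\equiv 0$, hence $h\equiv \mathrm{const}=:c_0$ on $I$, and therefore $\partial_x^2 h=0$ on $I$, which forces $g=h=c_0$ on $I$. If $c_0>0$ then $g\ge c_0>0$ on all of $\mathbb S$ (a constant lower bound propagates only if... ) — more carefully, one argues as in \cite{LP20}: integrate the relation against suitable test functions, or use that $h$ achieves its minimum in the interior and $h-\partial_x^2 h=g\ge 0$ together with $h$ being constant on an interval gives, via the strong maximum principle for $-\partial_x^2+1$, that $h$ is globally constant, hence $g$ is a nonnegative constant; the only $W^{1,\infty}\cap H^1$ function on $\mathbb S$ with $\tfrac b2 u^2+\tfrac{3-b}2(\partial_x u)^2\equiv\mathrm{const}$ and $u\in H^1$, $u\equiv 0$ on $I$, is $u\equiv 0$ on $\mathbb S$ (for $b\in[0,3)$ the $(\partial_x u)^2$ term is genuinely present; the borderline $b=3$, the DP case, needs the separate observation that then $g=\tfrac32(\partial_x u)^2$ and $\partial_x u\equiv 0$ on $I$ again yields $u$ constant hence zero). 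This shows $u(\cdot,t_1)\equiv 0$ on the whole circle.

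The final step is to upgrade "$u(\cdot,t_1)\equiv 0$ on $\mathbb S$" to "$u\equiv 0$ on $\mathbb S\times[0,T]$." This is exactly the forward-and-backward uniqueness for the Cauchy problem: by Theorem \ref{CH3} (its periodic version of \cite{LKT}, invoked via Remark \ref{ext}), the solution map is well-defined and the solution with zero data at time $t_1$ is the zero solution on a maximal time interval around $t_1$; a standard continuation/connectedness argument in $t$ over the compact interval $[0,T]$ then gives $u\equiv 0$ throughout. The "in particular" statement follows since if $u\equiv 0$ on an open set $\Omega\subset\mathbb R\times[0,T]$ one picks $t_1$ and $I$ with $I\times\{t_1\}\subset\Omega$, and on $\Omega$ both $u$ and $\partial_t u$ vanish, so the hypothesis of the theorem is met.

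I expect the main obstacle to be the second step — rigorously ruling out the constant $c_0>0$, i.e. showing that on the circle the vanishing of $\partial_x(1-\partial_x^2)^{-1}g$ on an interval with $g\ge 0$ forces $g\equiv 0$ there and then globally. On the line this is immediate from the explicit kernel $\tfrac12 e^{-|x|}$ and its strict monotonicity (this is the content of the relevant lemma in \cite{LP20}); on $\mathbb S$ the periodized kernel is still real-analytic and strictly monotone on $(0,2\pi)$, so the same computation — differentiate the convolution, split at the singularity of $|x|$, and exploit positivity of $g$ — should go through, but the bookkeeping with the periodization (and the role of the constant Fourier mode $\widehat g(0)$, which on the line was automatically handled by decay) requires care. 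Once that lemma is in hand, the passage from $u(\cdot,t_1)\equiv 0$ to $u\equiv 0$ is routine given Theorem \ref{CH3} and its periodic counterpart, and the argument is otherwise parallel to the BO/ILW proofs sketched above, with the Schwarz-reflection ingredient replaced by the elliptic (maximum-principle) structure of $(1-\partial_x^2)^{-1}$.
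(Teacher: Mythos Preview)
Your approach is correct and essentially matches the paper's: use the equation and the hypothesis to reduce to the stationary claim that if $g\ge 0$ on $\mathbb S$, $g=0$ on $I$, and $\partial_x(1-\partial_x^2)^{-1}g=0$ on $I$, then $g\equiv 0$; then conclude $u(\cdot,t_1)\equiv 0$ and finish by uniqueness for the IPBVP. The paper (for Theorem~\ref{CH4}, and by the remark preceding that proof the argument for Theorem~\ref{CH5} is declared similar) carries out the stationary step via the explicit kernel $-\tfrac12\,\mathrm{sgn}(\cdot)e^{-|\cdot|}$ and a strict-monotonicity comparison $F(\beta)\ge F(\alpha)$ with equality only if $f\equiv 0$; your maximum-principle variant for $h=(1-\partial_x^2)^{-1}g$ is a clean equivalent on the circle and in fact sidesteps the periodization bookkeeping you were worried about.

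Two small slips worth fixing. First, the opening reference to Proposition~\ref{prop1} and Lemma~\ref{lemma1} is misplaced: those Schwarz-reflection/complex-analytic lemmas are the tools for BO and ILW, whereas the $b$-family proof in the paper uses positivity of the kernel, not analytic continuation --- you do arrive at the right argument, but the framing is off. Second, your endpoint case is inverted: at $b=3$ one has $g=\tfrac32 u^2$, not $\tfrac32(\partial_x u)^2$, so the case that needs the extra observation ``$\partial_x u\equiv 0\Rightarrow u$ constant, and $u=0$ on $I\Rightarrow u\equiv 0$'' is $b=0$, not $b=3$.
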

\begin{remark}\label{QQ}\hskip10pt

\begin{enumerate}

\item
 Others UCP for the IPBVP for the b-equations in \eqref{eqb}  have been obtained in \cite{BrCo} and \cite{Br}. 
\item
From the above results, one has that for the CH equation \eqref{CH0} (and for all the members in the b-family \eqref{eqb}) both local and asymptotic at infinity UCP's are known only in the case where the solution $u_2\equiv 0$. Therefore, the UCP result for two arbitrary solutions $u_1,\,u_2$ remains \underline{open}.

\end{enumerate}
\end{remark}

Next, we shall sketch the proof of Theorem \ref{CH4} and remark that the proof for Theorem \ref{CH5} is similar, see \cite{LP20}.

\begin{proof}  [Proof of Theorem \ref{CH4}] We observe that formally $$
  (1-\partial_x^2)^{-1}h(x)=\frac{1}{2}\,\big(e^{-|\cdot| }\ast h \big)(x),\;\;\;h\in L^2(\mathbb R).
  $$
  Thus, from the hypothesis  
  $$
  \Big(\frac{b}{2} \,u^2+\frac{3-b}{2}(\partial_xu)^2\Big)(x,t_1)=0,\;\;\;x\in I,\;\;\;b\in[0,3],
  $$
and from the equation
$$
  \partial_x(1-\partial_x^2)^{-1}\Big(\frac{b}{2} \,u^2+\frac{3-b}{2}(\partial_xu)^2\Big)(x,t_1)=0,\;\;\;x\in I.$$
  
 Thus, defining 
\begin{equation*}
\begin{split}
F(x)&\equiv \partial_x(1-\partial_x^2)^{-1}\Big(\frac{b}{2} \,u^2+\frac{3-b}{2}(\partial_xu)^2\Big)(x,t^*)\\
&=-\frac{1}{2}\,\text{sgn} (\cdot)\,e^{-|\cdot|} \ast \Big(\frac{b}{2} \,u^2+\frac{3-b}{2}(\partial_xu)^2\Big)(x,t^*)
\end{split}
\end{equation*}
  
and  $$f(x)\equiv \Big(\frac{b}{2} \,u^2+\frac{3-b}{2}(\partial_xu)^2\Big)(x,t^*)\geq 0$$
   one has that
 $$
 F\in L^1(\mathbb R)\cap C_{\bf{b}}(\mathbb R),\,\,f\in L^1(\mathbb R)\cap L^{\infty}(\mathbb R)$$
 and 
 $$
 F(x)=f(x)=0,\;x\in [\alpha,\beta].
 $$
  
Since for $\;y\notin [\alpha,\beta]$ it follows that
 $$\;\;-\text{sgn}(\beta-y)\,e^{-|\beta-y|}> -\text{sgn}(\alpha-y)\,e^{-|\alpha-y|},$$
with
$$
 f(y)\geq 0,\;\;\;\;y\in\mathbb R,$$

and 
\begin{equation*}
\begin{split}
F(\beta)&=-\frac{1}{2}\,\int_{-\infty}^{\infty}\text{sgn}(\beta-y) e^{-|\beta-y|}f(y)\\
&\geq -\ \frac{1}{2}\,\int_{-\infty}^{\infty}\text{sgn}(\alpha-y) e^{-|\alpha-y|}f(y)=F(\alpha),
\end{split}
\end{equation*}

with  
$$
\;F(\beta)=F(\alpha)\; \;\;\;\text{if and only if}\;\;\;\;f=\big(\frac{b}{2} \,u^2+\frac{3-b}{2}(\partial_xu)^2\big)\equiv 0,
$$
which yields the desired result.
\end{proof}

\begin{remark}\label{question1}
 One can ask if the result in Theorem \ref{CH4} still hold under the assumption 
 $$
 u(x,t)=k_0,\;\;\;\;\;\;\;(x,t)\in \Omega, 
 $$
 with $k_0\in\mathbb R-\{0\}$ and $\,\Omega\in \mathbb R\times[0,T]$ open set, or for some $t_1\in (0,T)$ and for some open interval $I\subset \mathbb R$ 
 $$
 \partial_tu(x,t_1)=0\;\;\;\;\text{and}\;\;\;\;u(x,t_1)=k_0,\;\;\;\;x\in I
 $$
 (see Theorem \ref{BOlocg}).
 
 In the case $b=0$ in \eqref{eqb} the previous proof provides the result. However, for other values of the parameter $b\in(0,3]$, in particular for the CH and DP equations,  the result is \underline{unknown}.
\end{remark}

Finally, we consider the generalized form of the CH (gCH) equation
\begin{equation}\label{CHk}
\partial_tu+3 u \partial_xu-\partial_t \partial_x^2 u+2k\partial_xu=2\partial_xu \partial_x^2u+ u\partial_x^3u, \;\;\;t,x,k\in\R.
\end{equation}
or its formally equivalent form
\begin{equation}
\label{intCHk}
\partial_t u+u\partial_xu+\partial_x(1-\partial_x^2)^{-1}\big(u^2+\frac{1}{2}\,\big(\partial_xu)^2+2ku\big)=0.
\end{equation}

 In \cite{Len} a systematic study on the existence and properties of traveling wave solutions for the gCH was provided. The definition  in \cite{Len} for traveling waves solutions of the gCH equation is equivalent to the following one:
 
 \begin{definition} \label{def} \hskip10pt
 
 A function $\,\phi(x-ct)$ is a traveling wave solution of the gCH equation if 
 
 \begin{itemize}
 \item[\rm(i)] $\phi\in H^1_{\rm loc}(\mathbb R) $ and 
 \vspace{3mm}
 \item[\rm(ii)] $\phi$ satisfies, in the weak sense, the equation
 \begin{equation}
 \label{weak-sol}
-c \phi_x+\phi \phi_x+(1-\partial_x^2)^{-1}\big(\phi^2+\frac{(\phi_x)^2}{2}+2k\phi\big)=0.
 \end{equation}
 \end{itemize}
\end{definition}

\begin{remark}\label{prop}\hskip10pt

\begin{enumerate}
\item One sees that if $\phi(x)$ is a traveling wave solution of \eqref{CHk}, then $-\phi(-x)$ is red also a traveling wave solution of \eqref{CHk} with $-k$ instead of $k$.
\vspace{3mm}
\item Similarly, if $u(x,t)$ is a solution of the gCH equation \eqref{CHk}, then
$$
v(x,t)=u(x-\alpha t,t)+\alpha,
$$
solves the same equation with $k-\alpha $ instead of $k$.
\end{enumerate}
\end{remark}
Following \cite{Len} and  restricting  to the setting   $c>0$ and $\,\| \phi\|_{\infty}>m,$  with 
\begin{equation}
\label{decay}
\lim_{|x|\to\infty}\phi(x)=m,
\end{equation}
one has :  peakons exist if and only if $k=m$. In this case $c =\| \phi\|_{\infty}$.

\vspace{3mm}

 The \it cuspons with decay, \rm  i.e.  $\phi$ is a weak solution of \eqref{weak-sol}, smooth on $\mathbb R-\{a\}$, having a cusp at $a\in\mathbb R$
$$
\lim_{x\uparrow a} \phi_x(x)= - \lim_{x\downarrow a}\phi_x(x)=\infty,
$$
increasing in $(-\infty, a)$, decreasing in $(a,\infty)$, and satisfying \eqref{decay}, exist if $ k< -m$. In this case $c =\| \phi\|_{\infty}$.

\vspace{3mm}

 The \it stumpons with decay, \rm i.e. $\phi$ is a continuous weak solution of \eqref{weak-sol} such that there exist $a, b, d\in\mathbb R$ with $\,a<b, \,d>0, $ such that
$$
\lim_{x\uparrow a} \phi_x(x)= - \lim_{x\downarrow b}\phi_x(x)=\infty,\;\,\,\,\,\text{with}\;\,\,\,\phi(x)=d,\;\;x\in[a,b],
$$
increasing in $(-\infty, a)$, decreasing in $(a,\infty)$, satisfying \eqref{decay}, (inserting the interval $[a,b]$ at the cusp of a cuspon).

\begin{remark}\label{CH-open}\hskip10pt
\begin{enumerate}
\item The argument given above for the proof of Theorem \ref{CH4} shows that if $m> 0$  a (positive) stumpon with decay can only exists if $k \leq 0$.
Similarly, if $m<0$ then one needs to have $k\geq 0$, see Remark \ref{prop}. In the case, $m=0$ one needs $k\neq 0$. 

From the existence result of stumpons in \cite{Len} this case provides a counter-example of the question stated in Remark \ref{question1}  for the gCH equation in \eqref{CHk} with $k\neq 0$. Thus, this question for the CH equation in \eqref{CH0} remains \underline{open}.

\item Similarly, by taking $m=0$ and two stumpon solutions with common  maximum realized  at intervals $I_1$ and $I_2$, $|I_1\cap I_2|>0$, one gets a negative answer to the local UCP for two arbitrary (weak) solutions of the gCH with $k \neq 0$, see Remarks \ref{QQ} (2) and \ref{prop}.
\end{enumerate}
\end{remark}
\section{The dispersion generalized  Benjamin-Ono equation}

 Next, we shall consider the   dispersion generalized Benjamin-Ono  (dgBO) equation 
 \begin{equation}
 \label{dgbo}
\partial_tu-\partial_x D_x^{\alpha}u+u\partial_xu=0,\;\;\;t,\,x\in \mathbb R,\;\;\alpha\in [-1,2],
\end{equation}
where
$$
D_x^{\alpha}=(-\partial_x^2)^{\alpha/2}.
$$

One has that in \eqref{dgbo} $\alpha=2$  corresponds to the KdV equation, $\alpha=1$ to the BO equation, 
$\alpha=0$ to the inviscid Burgers' equation (after a change of frame), and   $\alpha=-1$ to the so called Hilbert-Burgers equation. Also, we recall that only in the cases $\alpha\in \{1,2\}$ the model is completely integrable.

For the global and local well posedness of the IVP associated to the dgBO \eqref{dgbo} we refer to \cite{Sa79}, \cite{KPV91}, \cite{CoKeSt}, \cite{Her}, \cite{LiPiSa}, \cite{MoRi04},  \cite{MoRi06}, \cite{Gu}, \cite{HIKK}, \cite{KeMaRo}, \cite{MoPiVe}, \cite{MoSaTz}, \cite{Vent2}  and references therein.

For $\alpha\in [1,2]$ the dgBO equation possess traveling waves solutions
$$
u(x,t)=\phi_{\alpha}(x-t).
$$
In the case $\alpha\in(1,2)$ the existence (up to symmetry of the equation) of the traveling wave was established in \cite{We89}, and the uniqueness in \cite{FraLe}. In the case $\alpha=1$, the BO equation,
the uniqueness was previously obtained in \cite{AmTo}. However, no explicit formula is known for $\phi_{\alpha}$, when $\alpha \in(1,2)$. In \cite{KeMaRo}, the following upper bound for the decay of the traveling wave was deduced
\begin{equation}
\label{decaygbo}
\phi_{\alpha}(x)\leq \frac{c_{\alpha}}{(1+x^2)^{(1+\alpha)/2}},\;\;\;\;\;\;\alpha\in[1,2).
\end{equation}

Roughly speaking, the mild decay for $\alpha\in [1,2)$ is due to the non-smoothness of the symbol modeling the dispersive relation in \eqref{dgbo} $\sigma_{\alpha}(\xi)=\xi |\xi|^{\alpha}$.

Concerning the asymptotic at infinity UCP we have the following result (see the notation in \eqref{notZ}) :

\begin{theorem}  [\cite{FLP13}] \label{dgBOres}
 
Let $\alpha\in(1,2)$. Let $\,u\in C([0,T] : H^s(\mathbb R))$ be the solution of the IVP associated to the dgBO  equation \eqref{dgbo} with data $u_0$.
\begin{enumerate}
\item
If $s\geq \alpha(r+1)$, $r\in [3/2+\alpha,5/2+\alpha) $ and $u_0\in \dot Z_{s,r}$, then $u\in C([0,T] : \dot Z_{s,r})$.
\item
If $u\in C([0,T] : \dot Z_{9,(5/2+\alpha)^{-}})$ is  solution of the IVP associated to the dgBO  equation \eqref{dgbo}
and  there exist $t_1,t_2,t_3$ with $$\,0<t_1<t_2<t_3<T\,$$
 such that  $\,u(\cdot,t_j) \in Z_{9,5/2+\alpha},\,j=1,2,3,$ then $\,u\equiv 0$.
\item
There exist $u\in C([0,T] : \dot Z_{9,(5/2+\alpha)^{-}})$ non-trivial  solution of the IVP associated to the dgBO  equation \eqref{dgbo} and $t_1,t_2$ with $0<t_1<t_2<T$ 
such that $\,u(\cdot,t_j) \in Z_{9,5/2+\alpha},\,j=1,2$.

\end{enumerate}
 \end{theorem}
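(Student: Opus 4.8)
The plan is to adapt to the operator $D_x^{\alpha}$, $\alpha\in(1,2)$, the scheme used for the Benjamin--Ono equation in Theorem \ref{BOres}, with the Hilbert transform replaced by $D_x^{\alpha}$ and its commutator and local smoothing estimates. For part (1) I would run a weighted energy estimate with truncated weights. For $N\gg 1$ fix a smooth nondecreasing truncation $w_N$ of $\langle x\rangle^{r}$, bounded for each $N$ (so all the integrations by parts below are legitimate) and satisfying $|w_N^{(j)}(x)|\lesssim w_N(x)^{1-j/r}$ uniformly in $N$. Multiplying \eqref{dgbo} by $w_N^{2}u$ and integrating in $x$, one estimates $\tfrac{d}{dt}\|w_N u(t)\|_2^{2}$; the dispersive term $\partial_xD_x^{\alpha}u$ produces, after symmetrizing, the commutator $[\,D_x^{\alpha};w_N^{2}\,]\partial_x u$ and lower order pieces, which are handled by the fractional commutator/Leibniz estimates for $D_x^{\alpha}$, $1<\alpha<2$, together with the gain of one derivative coming from the local smoothing of the group $U(t)$. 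The nonlinearity contributes $\int w_N^{2}u^{2}\partial_xu\,dx$, bounded by $\|\partial_xu\|_{\infty}\|w_N u\|_2^{2}\lesssim \|u\|_{H^{s}}\|w_N u\|_2^{2}$ since $s>3/2$, and this is finite on $[0,T]$. Gronwall then gives a bound on $\|w_N u(t)\|_2$ uniform in $N$; letting $N\to\infty$ yields $u\in C([0,T]:Z_{s,r})$, and since $\widehat{u}(0,t)=\widehat{u}(0,0)=0$ is conserved, $u\in C([0,T]:\dot Z_{s,r})$. The upper threshold $r<5/2+\alpha$ is exactly the value at which the limited smoothness of the dispersion symbol $\xi|\xi|^{\alpha}$ at the origin, combined with the single vanishing moment encoded in $\dot Z$, still lets this estimate close; beyond it, additional vanishing moments of the data would be needed.

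For part (2), by part (1) (or by the assumed regularity) the solution stays in $\dot Z_{9,(5/2+\alpha)^{-}}$ on $[t_1,t_3]$, so $u(\cdot,t)\in L^{1}(\langle x\rangle^{k}dx)$ for the relevant range of $k$ and the moments below make sense. Writing the solution by Duhamel, $u(t)=U(t-t_1)u(t_1)+\int_{t_1}^{t}U(t-t')(u\partial_xu)(t')\,dt'$, and examining on the Fourier side the $\xi$-derivatives of $\widehat{u}(\xi,t)$ at $\xi=0$ of order $\lfloor 5/2+\alpha\rfloor$ that are responsible for the possible loss of the weight $5/2+\alpha$, one isolates a scalar functional $\Phi(t)$ with the property that $u(\cdot,t)\in Z_{9,5/2+\alpha}$ forces $\Phi(t)=0$. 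Using conservation of mass and of the mean, in particular $\int x\,(u\partial_xu)(x,t)\,dx=-\tfrac12\|u(t)\|_2^{2}=-\tfrac12\|u_0\|_2^{2}$, the $dt'$-integration in the Duhamel term produces a contribution $\propto\|u_0\|_2^{2}\,t^{2}$, while the linear term contributes only a linear-in-$t$ piece; hence $\Phi$ is a polynomial in $t$ of degree at most two whose leading coefficient is a nonzero multiple of $\|u_0\|_2^{2}$. Therefore $\Phi(t_1)=\Phi(t_2)=\Phi(t_3)=0$ forces $\Phi\equiv 0$, so $\|u_0\|_2=0$ and $u\equiv 0$.

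For part (3), one runs the same computation keeping track of the explicit form of $\Phi$ as a quadratic in $t$ whose coefficients depend on $\|u_0\|_2^{2}$ and on the low order moments of $u_0$; choosing $u_0\in\dot Z_{9,(5/2+\alpha)^{-}}$ with $\|u_0\|_2\ne 0$ and these moments tuned so that this quadratic has two real roots $0<t_1<t_2<T$, the corresponding solution (which by part (1) remains in $\dot Z_{9,(5/2+\alpha)^{-}}$) satisfies $u(\cdot,t_j)\in Z_{9,5/2+\alpha}$ precisely at $t=t_1,t_2$; this is the exact analogue of Theorem \ref{BOres}(5) and of the sharpness discussion following it. The step I expect to be the main obstacle is the weighted energy estimate in part (1) near the critical exponent: one must bound the commutator of $D_x^{\alpha}\partial_x$ with $w_N^{2}$ uniformly in $N$ with only one power to spare from the $\dot Z$ hypothesis, which requires the sharp fractional commutator estimates for $1<\alpha<2$ and careful bookkeeping of the genuinely borderline terms. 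Once part (1) and the explicit form of $\Phi$ are available, parts (2) and (3) follow the Benjamin--Ono template.
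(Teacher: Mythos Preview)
The paper does not itself prove Theorem \ref{dgBOres}; it only quotes the result from \cite{FLP13} and remarks that it is the natural extension of Theorem \ref{BOres} to $\alpha\in(1,2)$. Your outline follows exactly the strategy of that reference (and of \cite{FP10}--\cite{FLP12} for BO): truncated weighted energy estimates together with fractional commutator bounds for $D_x^{\alpha}$ for part (1), and a Fourier-side analysis of the Duhamel formula near $\xi=0$ to isolate a quadratic-in-$t$ obstruction for parts (2)--(3). At the level of a sketch this is correct and matches the cited proof.

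One point deserves a sharper statement. In part (2) you assert that the linear piece $U(t-t_1)u(t_1)$ contributes ``only a linear-in-$t$ piece'' to $\Phi$; in fact the relevant $\xi$-derivatives of $e^{it\xi|\xi|^{\alpha}}$ at $\xi=0$ involve the singular part of the symbol $|\xi|^{\alpha}$ and its derivatives, and the identification of $\Phi$ as a genuine degree-two polynomial with leading coefficient a nonzero multiple of $\|u_0\|_2^{2}$ requires tracking precisely which moments of $u(t_1)$ multiply the singular factors, together with the identity $\int x\,u(x,t)\,dx=\int x\,u_0\,dx+\tfrac{t}{2}\|u_0\|_2^{2}$ (conservation of mean and $L^2$-norm). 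This is exactly the computation carried out in \cite{FLP13}, and your intuition from the BO case (the Remark after Theorem \ref{BOres}, where $\int_0^{t}\int x\,u\,dx\,dt'$ appears) is the right guide, but the step from ``$\int x(u\partial_xu)\,dx=-\tfrac12\|u_0\|_2^{2}$'' to ``the Duhamel term contributes $\propto\|u_0\|_2^{2}t^{2}$'' hides a nontrivial bookkeeping of which Fourier derivatives land on the exponential versus on $\widehat{u^2}$. Once that is done carefully, your conclusion for (2) and the construction in (3) are exactly as in the reference.
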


\begin{remark}
Roughly speaking, we observe that Theorem \ref{dgBOres} with $\alpha=1$ corresponds to Theorem \ref{BOres}, and that the gain in decay due to the stronger dispersion, i.e. $\alpha \in(1,2)$ is consistent with the result in \eqref{decaygbo} obtained in \cite{KeMaRo}.
\end{remark}

The next result regards local UCP for solutions of \eqref{dgbo}.

\begin{theorem} [\cite{KPPV}] \label{kppv}
 Let $u_1,\,u_2\in  C([0,T]:H^s(\mathbb R)) \cap C^1((0,T):H^{s-\alpha-1}(\mathbb R)),\;s>\alpha+3/2$ be strong solutions of the dgBO  equation \eqref{dgbo}   with $\alpha\in(-1,2)-\{0,1\}$.

 If there exist an open interval $I \subset \mathbb R $ and $t_1\in (0,T)$ such that
$$
\begin{cases}
\begin{aligned}
u_1(x,t_1)&=u_2(x,t_1),\;\;\;\;\,x\in I,\\
\partial_tu_1(x,t_1)&=\partial_tu_2(x,t_1),\;\;\;\;\,x\in I,
\end{aligned}
\end{cases}
$$
then,
$$u_1(x,t)=u_2(x,t),\;\;\;\;(x,t)\in \mathbb R\times [0,T].
$$

 In particular, if $\,u_1(x,t)=u_2(x,t)\,$ for $(x,t)\in\Omega\subset \mathbb R\times [0,T]$ an open set, then $\,u_1\equiv u_2$. 
\end{theorem}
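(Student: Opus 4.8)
The plan is to mimic the argument used for Theorem \ref{BOloc} and Theorem \ref{ILWloc}, replacing the Schwarz reflection / analytic-extension step by a suitable uniqueness statement adapted to the symbol $\sigma_\alpha(\xi)=\xi|\xi|^\alpha$. Set $w=u_1-u_2$; subtracting the two copies of \eqref{dgbo} gives the linear (in $w$) equation
\begin{equation*}
\partial_t w-\partial_x D_x^\alpha w+\big(\partial_x u_2\big)\,w+u_1\,\partial_x w=0,\qquad (x,t)\in\mathbb R\times[0,T].
\end{equation*}
By hypothesis $w(\cdot,t_1)$ vanishes on the open interval $I$, hence $\partial_x u_2\,w$ and $u_1\,\partial_x w$ vanish on $I$ at $t=t_1$; combining this with the equation and $\partial_t w(\cdot,t_1)=0$ on $I$ yields
\begin{equation*}
w(x,t_1)=\partial_x D_x^\alpha w(x,t_1)=0,\qquad x\in I.
\end{equation*}
So everything reduces to the following stationary claim: \emph{if $f\in H^s(\mathbb R)$ is real valued with $f=\partial_x D_x^\alpha f=0$ on a non-empty open interval $I$, then $f\equiv 0$} (for $\alpha\in(-1,2)\setminus\{0,1\}$).

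To prove this claim I would exploit the fact that $\sigma_\alpha(\xi)=\xi|\xi|^\alpha$ is, for non-integer $\alpha$, the symbol of a \emph{non-local} operator whose ``kernel'' has a homogeneous singularity. Concretely, $\partial_x D_x^\alpha$ is (up to constants) a Riesz-type / fractional operator: $\partial_x D_x^\alpha f(x)=c_\alpha\,\mathrm{p.v.}\int \frac{\mathrm{sgn}(x-y)}{|x-y|^{1+\alpha}}\big(f(y)-\text{Taylor correction}\big)\,dy$, i.e. a constant multiple of $D_x^{\alpha+1}$ composed with a sign, or equivalently of $\mathcal H\,D_x^{\alpha+1}$. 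The two pieces of information, $f=0$ on $I$ and $\partial_x D_x^\alpha f=0$ on $I$, then say that both $f$ and a fractional-Laplacian-type transform of $f$ vanish on $I$. For $\alpha+1\notin 2\mathbb Z$ such a double vanishing forces $f\equiv 0$: one way is to write $g=\partial_x D_x^\alpha f$, note $\widehat g(\xi)=c_\alpha\,\xi|\xi|^\alpha\widehat f(\xi)$, and observe that $f$ and $g$ are then both supported (as distributions, after subtracting their restriction to $\mathbb R\setminus I$, which is what the hypothesis controls) in a way incompatible with the symbol relation unless $f=0$ — this is exactly the non-local ``rigidity'' that fails for $\alpha\in\{0,1,2\}$ where the symbol is a polynomial times a bounded factor and local cancellations are possible. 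I expect the cleanest route is: assume $f\not\equiv0$, localize near an endpoint of $I$, and derive a contradiction from the exact homogeneity $|x-y|^{-1-\alpha}$ of the kernel, in the spirit of the sign/monotonicity computation at the end of the proof of Theorem \ref{CH4}, or from an analytic-continuation argument in a sector using that $(i\zeta)|{\zeta}|^\alpha$ admits a holomorphic branch off the appropriate cut.

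The main obstacle is precisely this stationary lemma: unlike the BO case ($\alpha=1$), where $f+i\mathcal H f$ extends holomorphically to a half-plane and Proposition \ref{prop1} applies verbatim, and unlike the ILW case where the symbol's exponential decay gives a holomorphic extension to a strip, here the symbol $\xi|\xi|^\alpha$ is only homogeneous with a branch-point type non-smoothness at $\xi=0$, so there is no single holomorphic extension to exploit. One must instead use the homogeneity of the kernel of $\partial_x D_x^\alpha$ directly — this is the heart of the matter and is what restricts the statement to $\alpha\notin\{0,1,2\}$ (for these values the conclusion is genuinely false, as local models admit compactly supported obstructions, cf.\ the compacton example in \eqref{compacton}). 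Once the lemma is in hand, the reduction above closes the proof, and the ``in particular'' statement about open sets $\Omega$ follows by choosing any $t_1$ with $\{t_1\}\times I\subset\Omega$ and noting that $u_1=u_2$ on $\Omega$ forces both $u_1(\cdot,t_1)=u_2(\cdot,t_1)$ and $\partial_t u_1(\cdot,t_1)=\partial_t u_2(\cdot,t_1)$ on $I$.
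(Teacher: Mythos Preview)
Your reduction is exactly the paper's: set $w=u_1-u_2$, subtract the equations, and use the hypotheses at $t_1$ to get
\[
w(x,t_1)=0\quad\text{and}\quad \partial_x D_x^{\alpha}w(x,t_1)=0,\qquad x\in I,
\]
so that everything hinges on the stationary claim ``$f=\partial_x D_x^{\alpha}f=0$ on an open interval $\Rightarrow f\equiv 0$''. Up to here there is nothing to add.

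The genuine gap is in your treatment of that stationary claim. None of the three routes you sketch is a proof. The sign/monotonicity trick from Theorem \ref{CH4} relied crucially on the pointwise nonnegativity of $f=\tfrac{b}{2}u^2+\tfrac{3-b}{2}(\partial_x u)^2$; your $w(\cdot,t_1)$ has no sign, so that argument does not transfer. The ``analytic continuation in a sector'' idea does not go through either: the symbol $\xi|\xi|^{\alpha}$ has a branch point at $\xi=0$, so there is no domain on which $F(x+iy)=\int e^{i(x+iy)\xi}\,m(\xi)\widehat f(\xi)\,d\xi$ is simultaneously holomorphic and has boundary trace $f+i\,(\text{operator})f$, in contrast to the BO and ILW cases where the symbol was $\text{sgn}(\xi)$ or $\coth(2\pi\delta\xi)$. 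And the Fourier-support remark (``incompatible with the symbol relation'') is not an argument: $f$ and $g=\partial_x D_x^{\alpha}f$ both vanish on $I$, but their Fourier transforms are typically supported on all of $\mathbb R$, and nothing about the multiplier $\xi|\xi|^{\alpha}$ forbids that.

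What the paper actually uses is an external, nontrivial input: the unique-continuation result of Ghosh--Salo--Uhlmann (Theorem \ref{GSU}), namely that for $a\in(0,2)$, if $f=(-\Delta)^{a}f=0$ on a nonempty open set then $f\equiv 0$. Its proof rests on the Caffarelli--Silvestre extension characterization of $(-\Delta)^{a}$, which turns the nonlocal vanishing condition into a boundary condition for a \emph{local} degenerate-elliptic problem in one more variable, where Carleman/unique-continuation tools are available. Once Theorem \ref{GSU} is in hand, your reduction closes immediately: writing $g=\partial_x w(\cdot,t_1)$ one has $g=0$ on $I$ (since $w=0$ there) and $D_x^{\alpha}g=\partial_x D_x^{\alpha}w(\cdot,t_1)=0$ on $I$; applying Theorem \ref{GSU} in dimension $n=1$ with $2a=\alpha$ (and, for $\alpha\in(-1,0)$, with the roles of $g$ and $D_x^{\alpha}g$ interchanged so that the exponent $-\alpha\in(0,1)$ is positive) gives $g\equiv 0$, hence $w(\cdot,t_1)\equiv 0$, and well-posedness finishes the argument. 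The missing idea in your proposal is precisely this Caffarelli--Silvestre extension step; without it, the stationary lemma for $D_x^{\alpha}$ with $\alpha\notin\mathbb Z$ is not accessible by the elementary devices you list.
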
 

\begin{remark}
The condition on $s$ in Theorem \ref{kppv} is not optimal. 

\end{remark}

As in the case of the k-gBO equation \eqref{kgBO} and the ILW equation \eqref{ILW} we have as a consequence of Theorem \ref{kppv} and its proof:

\begin{theorem} \label{dgBOlocgg}

 Let $n\in \mathbb Z^+$ and $$u_1\in  C([0,T]:H^s(\mathbb R)) \cap C^1((0,T):H^{s-2}(\mathbb R)),\;s>\alpha +2n+3/2,$$ be a strong solution of
dgBO equation \eqref{dgbo}. If there exist  an open interval $\,I \subset \mathbb R$ and $t_1\in (0,T)$ such that
$$
\partial_tu_1(x,t_1)=Q_{2n-1}(x),\;\;\;\;\text{and}\;\;\;\;u(x,t_1)=P_n(x),\;\;\;\;\;x\in I,
$$
where $Q_{2n-1}$ and $P_n$ are polynomial with real coefficients of degree at most $2n-1$ and $n$, respectively, 
 then,
$$
Q_{2n-1}(x)\equiv P_n(x)\equiv u_1(x,t)\equiv 0,\;\;\;\;(x,t)\in \mathbb R\times [0,T].
$$

\end{theorem}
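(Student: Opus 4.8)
The plan is to run, for the non-local operator $\partial_xD_x^{\alpha}$, the same argument by which Theorem \ref{BOlocg} and Theorem \ref{ILWlocg} were obtained from Theorem \ref{BOloc} and Theorem \ref{ILWloc} and their proofs: convert the hypothesis at time $t_1$ into a stationary statement, strip off the polynomial ambiguity by differentiating in $x$, and then invoke the stationary unique continuation lemma underlying Theorem \ref{kppv}.

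First I would fix $t_1\in(0,T)$ and write $v(x):=u_1(x,t_1)\in H^s(\mathbb R)$, $s>\alpha+2n+3/2$. On $I$ the hypotheses say $v=P_n$ and $\partial_tu_1(\cdot,t_1)=Q_{2n-1}$; evaluating equation \eqref{dgbo} at $t=t_1$ and using $u_1(\cdot,t_1)=P_n$ on $I$ gives there
$$
\partial_xD_x^{\alpha}v=\partial_xD_x^{\alpha}u_1(\cdot,t_1)=\big(\partial_tu_1+u_1\partial_xu_1\big)(\cdot,t_1)=Q_{2n-1}+P_nP_n',
$$
a polynomial of degree $\le 2n-1$ (since $\deg(P_nP_n')\le 2n-1$). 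So on $I$ both $v$ and $D_x^{\alpha}\partial_xv=\partial_xD_x^{\alpha}v$ agree with polynomials, of degrees $\le n$ and $\le 2n-1$ respectively.

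Next I would set $g:=\partial_x^{2n}v\in H^{s-2n}(\mathbb R)$ and note $s-2n>\alpha+3/2$, so $g$ lies in the class to which the stationary lemma behind Theorem \ref{kppv} applies. Since $\partial_x$ and $D_x^{\alpha}$ commute, on $I$ we have
$$
g=\partial_x^{2n}P_n=0\quad\text{and}\quad D_x^{\alpha}\partial_xg=\partial_x^{2n}\big(\partial_xD_x^{\alpha}v\big)=\partial_x^{2n}\big(Q_{2n-1}+P_nP_n'\big)=0,
$$
because $\partial_x^{2n}$ annihilates every polynomial of degree $\le 2n-1$ (in particular of degree $\le n$). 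Thus $g$ and $D_x^{\alpha}\partial_xg$ vanish on the open interval $I$, and that lemma forces $g\equiv 0$, i.e. $\partial_x^{2n}v\equiv 0$ on $\mathbb R$. Hence $v$ coincides on all of $\mathbb R$ with a polynomial of degree $\le 2n-1$; being in $L^2(\mathbb R)$, it must be $\equiv 0$. Therefore $u_1(\cdot,t_1)\equiv 0$, whence $P_n=v|_I\equiv 0$ and, once more from the equation, $Q_{2n-1}=\partial_tu_1(\cdot,t_1)|_I=\big(\partial_xD_x^{\alpha}u_1-u_1\partial_xu_1\big)(\cdot,t_1)|_I\equiv 0$. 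Finally $u_1(\cdot,t_1)\equiv 0$ together with uniqueness for the IVP of \eqref{dgbo}, forward on $[t_1,T]$ and — via the reflection $w(x,t):=u_1(-x,t_1-t)$, which again solves \eqref{dgbo} — backward on $[0,t_1]$, yields $u_1\equiv 0$ on $\mathbb R\times[0,T]$.

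The only new ingredients beyond Theorem \ref{kppv} are elementary: the degree bookkeeping (it is exactly because $Q_{2n-1}+P_nP_n'$ has degree $\le 2n-1$ that $2n$ $x$-derivatives suffice) and the attendant loss of $2n$ derivatives, which is precisely why the threshold $s>\alpha+2n+3/2$ appears. All of the analytic substance — the stationary unique continuation lemma for $\partial_xD_x^{\alpha}$, resting on the non-polynomial character of the symbol $|\xi|^{\alpha}\xi$ for $\alpha\notin\{0,1\}$ — is inherited from the proof of Theorem \ref{kppv}, and that lemma is the only real (and already settled) obstacle; the reduction above adds no further difficulty.
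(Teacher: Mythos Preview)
Your proof is correct and follows essentially the same route the paper indicates: the paper does not spell out a proof of Theorem \ref{dgBOlocgg} but says it is ``a consequence of Theorem \ref{kppv} and its proof'' via Theorem \ref{GSU} (and notes in Remark 6.5(2) that Theorem \ref{GSU} extends to polynomial data). Your device of applying $\partial_x^{2n}$ to kill the polynomials and then invoking the stationary unique continuation lemma behind Theorem \ref{kppv} is exactly a concrete implementation of this, with the regularity bookkeeping $s>\alpha+2n+3/2$ matching the loss of $2n$ derivatives.
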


 The main idea  to prove Theorem \ref{kppv} and Theorem \ref{dgBOlocgg} is to apply the following result found in \cite{GhSaUh}, (see results in \cite{FaFe}, \cite{FaFe2}, \cite{Ru}, \cite{Yu}).
 
\begin{theorem} [\cite{GhSaUh}] \label{GSU}
Let  $ \,a\in (0,2)\,$and $\,f\in H^s(\mathbb R^n),\,s\in\mathbb R$. If there exists a non-empty open set $\,\Omega\subset \mathbb R^n$ such that
\begin{equation}
\label{hypGSU}
 (-\Delta)^{a}f(x)=f(x)=0,\;\;\;\;\;\text{in}\;\;\;\;\mathcal D'(\Omega),
 \end{equation}
 then $\,f\equiv 0$.
 
 \end{theorem}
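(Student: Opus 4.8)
The plan is to reduce this nonlocal unique continuation statement to a unique continuation principle \emph{from the boundary} for a local, degenerate-elliptic equation in one extra variable, by means of the Caffarelli--Silvestre extension. I would first carry out the argument for $a\in(0,1)$ and then reduce the range $a\in(1,2)$ to it; the endpoint $a=1$ (the ordinary Laplacian, a local operator) is of course genuinely excluded.

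So assume $a\in(0,1)$ and let $w=w(x,y)$, $(x,y)\in\mathbb R^{n+1}_+:=\mathbb R^n\times(0,\infty)$, be the Caffarelli--Silvestre extension of $f$, i.e. the unique solution in the natural weighted Sobolev space of
\begin{equation*}
\operatorname{div}_{x,y}\!\bigl(y^{1-2a}\,\nabla_{x,y}w\bigr)=0\ \ \text{in }\mathbb R^{n+1}_+,\qquad w(\cdot,0)=f\ \ \text{on }\mathbb R^n,
\end{equation*}
which additionally satisfies, in the appropriate distributional sense permitted by $f\in H^s(\mathbb R^n)$,
\begin{equation*}
-\lim_{y\downarrow0}\,y^{1-2a}\,\partial_y w(\cdot,y)=c_a\,(-\Delta)^{a}f\quad\text{on }\mathbb R^n
\end{equation*}
for a nonzero constant $c_a$. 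Then hypothesis \eqref{hypGSU} says exactly that both the Dirichlet trace $w|_{y=0}$ and the weighted conormal trace $y^{1-2a}\partial_y w|_{y=0}$ vanish on the open boundary patch $\Omega\times\{0\}$.

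The hard part will be the next step: a unique continuation principle from the boundary for the degenerate elliptic operator $L_a=\operatorname{div}(y^{1-2a}\nabla\,\cdot\,)$, namely that if $L_a w=0$ in a half-ball centered at a point of $\Omega\times\{0\}$ and both $w$ and its weighted conormal derivative vanish on the flat face, then $w\equiv0$ in a smaller half-ball. Since $|y|^{1-2a}$ is a Muckenhoupt $A_2$ weight, this can be obtained from Carleman estimates tailored to that weight; the genuine difficulty is the degeneracy/singularity at $y=0$, which forces Carleman weights adapted to $|y|^{1-2a}$ and a careful treatment of the boundary terms produced by the conormal derivative. Granting this, $w$ vanishes on a nonempty open subset of $\mathbb R^{n+1}_+$.

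To finish, I would propagate the vanishing everywhere: in $\{y>0\}$ the equation reads $\Delta w+\tfrac{1-2a}{y}\,\partial_y w=0$, which is uniformly elliptic on compact subsets with real-analytic coefficients, so $w$ is real-analytic there; since $\{y>0\}$ is connected and $w$ vanishes on an open subset, $w\equiv0$ on $\{y>0\}$, and taking the trace gives $f=w(\cdot,0)\equiv0$. Finally, for $a\in(1,2)$ set $g:=(-\Delta)f\in H^{s-2}(\mathbb R^n)$; since $f\equiv0$ on the open set $\Omega$ and $-\Delta$ is local, $g\equiv0$ on $\Omega$, while $(-\Delta)^{a-1}g=(-\Delta)^{a}f\equiv0$ on $\Omega$, so the already-proved case with exponent $a-1\in(0,1)$ forces $g\equiv0$; thus $|\xi|^2\widehat f\equiv0$, $\supp\widehat f\subseteq\{0\}$, $f$ is a polynomial, and $f\in H^s(\mathbb R^n)$ forces $f\equiv0$.
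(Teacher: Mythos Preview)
Your proposal is correct and follows precisely the route the paper indicates: the paper merely cites the Caffarelli--Silvestre extension \cite{CaSi} and defers to \cite{GhSaUh}, and you have filled in the remaining steps (boundary unique continuation for the degenerate extension, interior propagation by real-analyticity, and the reduction of $a\in(1,2)$ to $a-1\in(0,1)$) in the standard way.

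One remark on what you flagged as the ``hard part'': you can bypass Carleman estimates tailored to the boundary. Since both the Dirichlet trace $w(\cdot,0)$ and the weighted conormal trace $y^{1-2a}\partial_y w(\cdot,0)$ vanish on $\Omega$, the extension of $w$ by zero to $\Omega\times(-\delta,0]$ is a weak solution of $\operatorname{div}(|y|^{1-2a}\nabla\,\cdot\,)=0$ across $\{y=0\}$; since $|y|^{1-2a}\in A_2$, off-the-shelf (interior) unique continuation for $A_2$-weighted divergence-form operators then yields $w\equiv 0$ in an open subset of $\{y>0\}$. This is the argument in \cite{GhSaUh} and in \cite{FaFe}, \cite{Ru}, \cite{Yu} referenced by the paper, and it avoids the delicate boundary Carleman analysis you anticipated.
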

 
\begin{remark}\hskip10pt

\begin{enumerate}

\item
Notice that the case $n=1$ and $a=1/2$  in Theorem \ref{GSU} corresponds to Lemma \ref{lemma1}.

\item
Theorem \ref{GSU} still  holds if one replaces \eqref{hypGSU} by the following hypothesis : there exist polynomials 
$ P_n,\,Q_k$ of degree $n, \,k$, respectively, such that
$$
(-\Delta)^af(x) = P_n(x) \;\;\;\;\text{and}\;\;\;f(x)=Q_k(x),\;\;\;\,x\in\Omega.
$$

\item Theorem \ref{GSU} tells us that a fractional porous medium equation of the form
$$
\partial_tu =(-\Delta)^a(u^{1+m}), \;\;\;t>0, \,x\in \mathbb R^n, \,m>0, \,a\in \mathbb R^+-2\mathbb Z,
$$
cannot have compact support solutions.
\end{enumerate}
\end{remark}

 The proof of Theorem \ref{GSU} is a consequence of the characterization of the fractional power of the Laplacian found in \cite{CaSi} : If
 $$
\begin{cases}
\begin{aligned}
&\Delta v+\frac{1-\alpha}{y}\,\partial_yv+\partial_y^2v=0,\;\;(x,y)\in\mathbb R^n\times(0,\infty),\\
& \,v(x,0)=f(x),
\end{aligned}
\end{cases}
$$
then for  $\alpha\in(0,2)$ there exists $c>0$ such that 
$$
c(-\Delta)^{a}f=-\lim_{y\downarrow 0} y^{1-2a} \partial_yv=\frac{1}{2a}\lim_{y\to 0} \frac{{v(x,y)-v(x,0)}}{y^{2a}}.
$$

Once, Theorem \ref{GSU} is available  the proof of Theorem \ref{kppv}  follows the argument given in the proof of Theorem \ref{BOloc}.

Motivated by Theorem \ref{GSU} and using an argument found in \cite{StTo} the following result was established in \cite{KPPV}:

\begin{theorem}
\label{afterGSU}
Let $\alpha_1,\alpha_2\in \mathbb R$, $\alpha_1-\alpha_2\notin 2\mathbb Z$ and $\,f\in H^{s}(\mathbb R^n),\;s\in\mathbb R$. If there exists a non-empty open set $\Theta\subset \mathbb R^n$ such that 
\begin{equation}
(1-\Delta)^{\alpha_j/2}f (x)=0,\;\;\;\;\;\text{in}\;\;\mathcal D'(\Theta)\;\,\;\;\;\text{for}\;\;j=1,2,
\end{equation}
 then $f \equiv  0$ in $H^s(\mathbb R^n)$.
 \end{theorem}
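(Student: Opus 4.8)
The plan is to strip off one of the two Bessel potentials, then bring the remaining fractional exponent into the interval $(0,1)$ by exploiting that the integer powers of $1-\Delta$ are \emph{local} operators, and finally to run the extension–plus–unique–continuation scheme of \cite{StTo}, which is the Bessel counterpart of the Caffarelli–Silvestre argument behind Theorem \ref{GSU}.

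First I would use that for every $\mu\in\mathbb R$ the operator $(1-\Delta)^{\mu/2}$ is an isomorphism of $\mathcal S'(\mathbb R^n)$ (its symbol $(1+|\xi|^2)^{\mu/2}$ being smooth, nowhere zero, with polynomially bounded inverse), hence injective and mapping $H^s$ onto $H^{s-\mu}$. Interchanging $\alpha_1,\alpha_2$ if needed, assume $\alpha_1>\alpha_2$ and set $g=(1-\Delta)^{\alpha_2/2}f\in H^{s-\alpha_2}(\mathbb R^n)$. By hypothesis $g=0$ in $\mathcal D'(\Theta)$, while $(1-\Delta)^{\gamma}g=(1-\Delta)^{\alpha_1/2}f=0$ in $\mathcal D'(\Theta)$, where $\gamma=(\alpha_1-\alpha_2)/2>0$ and $\gamma\notin\mathbb Z$ since $\alpha_1-\alpha_2\notin 2\mathbb Z$; and it is enough to prove $g\equiv 0$, as then $f=(1-\Delta)^{-\alpha_2/2}g\equiv 0$. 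Next, write $\gamma=k+\sigma$ with $k\in\mathbb Z^{+}\cup\{0\}$ and $\sigma\in(0,1)$, and put $h=(1-\Delta)^{k}g$, an element of a Sobolev space $H^{t}(\mathbb R^n)$. Because $(1-\Delta)^{k}$ is a constant-coefficient differential operator, it is local, so $h=(1-\Delta)^{k}g=0$ in $\mathcal D'(\Theta)$; moreover $(1-\Delta)^{\sigma}h=(1-\Delta)^{\gamma}g=0$ in $\mathcal D'(\Theta)$, and $(1-\Delta)^{k}$ is injective on $\mathcal S'$. Thus the theorem reduces to: \emph{if $h\in H^{t}(\mathbb R^n)$, $t\in\mathbb R$, $\sigma\in(0,1)$, and $(1-\Delta)^{\sigma}h=h=0$ in $\mathcal D'(\Theta)$, then $h\equiv 0$.}

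For this reduced statement I would invoke the Stinga--Torrea extension for the fractional Bessel operator: the solution $U=U(x,y)$ on $\mathbb R^n\times(0,\infty)$ of
\[
\Delta_x U+\tfrac{1-2\sigma}{y}\,\partial_y U+\partial_y^2 U-U=0,\qquad U(\cdot,0)=h,
\]
recovers $(1-\Delta)^{\sigma}h$ (in $\mathcal D'$, which is legitimate for $h$ in any Sobolev space, just as in the pure-Laplacian case used for Theorem \ref{GSU}) through $c_\sigma(1-\Delta)^{\sigma}h=-\lim_{y\downarrow 0}y^{1-2\sigma}\partial_y U$. Over $\Theta\times\{0\}$ both the trace $U(\cdot,0)=h$ and the weighted conormal derivative $\lim_{y\downarrow 0}y^{1-2\sigma}\partial_y U$ vanish, so the even reflection $\widetilde U(x,y)=U(x,|y|)$ is a weak solution of $\operatorname{div}(|y|^{1-2\sigma}\nabla\widetilde U)=|y|^{1-2\sigma}\widetilde U$ on the connected open set $\mathcal O=(\mathbb R^n\times(\mathbb R\setminus\{0\}))\cup(\Theta\times\{0\})$, where $|y|^{1-2\sigma}$ is a Muckenhoupt $A_2$ weight. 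Since $\widetilde U$ is even in $y$ and vanishes on the flat portion $\Theta\times\{0\}$, all its tangential derivatives vanish there and the equation (using $\sigma\neq 1$) forces the normal ones to vanish as well, so $\widetilde U$ vanishes to infinite order at each point of $\Theta\times\{0\}$. The strong unique continuation property for such weighted degenerate elliptic equations with a bounded zeroth-order term---the same tool that underlies the proof of Theorem \ref{GSU}---then yields $\widetilde U\equiv 0$ on $\mathcal O$, hence $U\equiv 0$ on $\mathbb R^n\times(0,\infty)$, hence $h=U(\cdot,0)\equiv 0$; unwinding the reductions, $f\equiv 0$.

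The delicate part is this last step: one must set up the Stinga--Torrea extension for $(1-\Delta)^{\sigma}$ with low-regularity data, so that the boundary identities hold in $\mathcal D'$ and the even reflection across $\Theta\times\{0\}$ genuinely is a weak solution of the degenerate equation, and then quote the strong unique continuation for the reflected weighted equation. The zeroth-order term produced by the ``$1$'' in ``$1-\Delta$'' is of lower order and does not interfere with the unique continuation mechanism, which is precisely why the argument for Theorem \ref{GSU} (the case with $-\Delta$ in place of $1-\Delta$, whose instance $n=1$, $\sigma=1/2$ is Lemma \ref{lemma1}) carries over.
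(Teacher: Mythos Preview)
Your proposal is correct and follows precisely the approach the paper indicates: the paper does not give its own detailed proof of this theorem but states that it ``was established in \cite{KPPV}'' via ``an argument found in \cite{StTo},'' and your reduction (peel off $(1-\Delta)^{\alpha_2/2}$, then the integer part $(1-\Delta)^k$ using locality, leaving $\sigma\in(0,1)$) together with the Stinga--Torrea extension for $(1-\Delta)^\sigma$ and the ensuing unique continuation for the degenerate $A_2$-weighted equation is exactly that scheme. The only caveat is that the infinite-order vanishing step for the reflected extension is more delicate than your heuristic suggests (the equation is degenerate at $y=0$, so one works with the appropriate weighted notion and invokes the strong unique continuation results of \cite{FaFe2}, \cite{Ru}, \cite{Yu}), but you flag this yourself as ``the delicate part,'' and the paper likewise defers those details to \cite{KPPV}.
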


\section{The general fractional Schr\"odinger equation}

 In this section we shall consider  fractional Schr\"odinger (GFDS) equations of the form
\begin{equation}
 \label {fNLS}
 i\partial_t u +(\mathcal L_m)^{\alpha/2}u+V \,u+\left(W\ast F(|u|)\right)u+P(u,\bar{u})=0,
 \end{equation}
where $(x,t)\in \mathbb R^n\times\mathbb R$,
\begin{equation}
\label{def1}
\mathcal L_m=(-\Delta+m^2),\hskip15pt m\geq 0,
\end{equation}

\begin{equation}
\label{def2}
\begin{cases}
\begin{aligned}
&\alpha\in\mathbb R-2\mathbb Z,\hskip45pt \text{if}\hskip5pt m>0,\\
&\alpha\in (-n,\infty)-2\mathbb Z,\hskip10pt \text{if}\hskip5pt m=0, 
\end{aligned}
\end{cases}
\end{equation}
$V=V(x,t)$ representing the potential energy,  $W=W(|x|)$ the Hartree integrand, and $P(z,\bar{z})$ the nonlinearity  with $P(0,0)=0$.

Concrete examples of the model in \eqref{fNLS} arise  in several different contexts, for example:

\begin{enumerate}
\item[\rm(i)] when $m=0$, $W=P=0$, it was used in \cite{Las}  to describe particles in a class of  Levi stochastic processes,

\item[\rm(ii)]  when $m>0$, $W=P=0$, it was derived as a generalized semi-relativistic (Schr\"odinger) equation, see \cite{Les} and references therein,

\item[\rm(iii)]  when $m=0$, $\alpha=1$, $V=W=0$, and $P(u,\bar{u})=\pm|u|^{\alpha}u,\,\alpha>0,$ it is known as the half-wave equation, see \cite{BGLV}, \cite{GLPR} and references therein, 

\item[\rm(iv)]  when $m=1$, $V=P=0$, $F(|z|)=|z|^2$ the model arises in gravitational collapse, see \cite{ElSc}, \cite{FrLe} and references therein,

\item[\rm(v)] when $m=0$, $V=W=0$ and 
\begin{equation*}
\hskip35pt P(u,\bar{u})=c_0|z|^2z+c_1z^3+c_2z\,\bar{z}+c_3\bar{z}^3,\;\;\;\;c_0\in\mathbb R,\;c_1, c_2, c_3\in\mathbb C,
\end{equation*}
it was deduced in \cite{IoPu} in the study the long-time behavior of solutions to the water waves equations in $\mathbb R^2$, 
where $(-\partial_x^2)^{1/2}$ modeling the dispersion relation of the linearized gravity water waves equations for one-dimensional interfaces.

The well-posedness of the IVP associated to some equations of the type in \eqref{fNLS} has motivated several works, we refer to \cite{Les}, \cite{BoRi}, \cite{CHHO}, \cite{ChOz}, \cite{HoSi}, \cite{KLR} and references therein. Here, in order to simplify the exposition, we will assume that the IVP associated to the equation \eqref{fNLS} is locally well-posed in $H^s(\mathbb R^n)$ for $s\geq s^*=s^*(\alpha,n, V,W,F,P)$.
\end{enumerate}

Concerning local UCP for the IVP associated to the equation \eqref{fNLS} with $m>0$, we have the following result:

\begin{theorem} [\cite{KPPV}] \label{kppv1}\hskip10pt
\begin{enumerate}
\item  Let $\,\alpha\in (-2,2)-\{0\},\;m>0$. Let $$u_1,\,u_2\in  C([0,T]:H^{s}(\mathbb R)) \cap C^1((0,T):H^{s-\alpha}(\mathbb R)),\;s>s^*,$$ be 
two solutions of the IVP associated to the GFDS equation  \eqref{fNLS} with $W\equiv 0$. If there exist an open set $D\subset \mathbb R^n$ and $t_1\in[0,T]$ such that
\begin{equation}
\label {hyp-1}
\begin{cases}
\begin{aligned}
u_1(x,t_1)&=u_2(x,t_1),\;\;\;\;\;\;\;\;x\in D,\\
\partial_tu_1(x,t_1)&=\partial_tu_2(x,t_1),\;\;\;\;\;x\in D,
\end{aligned}
\end{cases}
\end{equation}
then
$$
u_1(x,t_1)=u_2(x,t_1),\;\;\;\;(x,t)\in\mathbb R^n\times [0,T].
$$
In particular, if $u_1(x,t)=u_2(x,t)$ for $(x,t)\in\Omega \subset\mathbb R^n\times[0,T]$ with $\Omega$ open, then $u_1\equiv u_2$.

\item For the general form of the GFDS equation \eqref{fNLS}, i.e. $W\neq 0$, the result in (1) still holds if  $u_2(x,t)=0$ for $(x,t)\in \mathbb R^n\times [0,T]$.
\end{enumerate}
\end{theorem}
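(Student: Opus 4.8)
\textbf{Proof proposal for Theorem \ref{kppv1}.}

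The plan is to reduce the statement to the extension of the Ghosh--Salo--Uhlmann unique continuation result for the Bessel-type operator, namely Theorem \ref{afterGSU}, exactly as the proof of Theorem \ref{BOloc} reduced the Benjamin--Ono case to Lemma \ref{lemma1}. First I would set $w=u_1-u_2$ (in case (1)) or $w=u_1$ (in case (2), where $u_2\equiv 0$). Using the two equations and the hypothesis \eqref{hyp-1}, the difference $w$ satisfies a GFDS-type equation with a zero-order remainder that is linear in $w$; when $W\equiv 0$ this remainder is genuinely a lower-order term (a potential times $w$ plus terms coming from $P(u_1,\bar u_1)-P(u_2,\bar u_2)$, which by the mean value theorem is a bounded coefficient times $w$ or $\bar w$), so on the open set $D$ we have both $w(\cdot,t_1)=0$ and, from the equation, $(\mathcal L_m)^{\alpha/2}w(\cdot,t_1)=0$ in $\mathcal D'(D)$. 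In case (2) the Hartree term $(W*F(|u_1|))u_1$ also vanishes on $D$ because it is again a bounded multiple of $w=u_1$ there, so the same two relations hold.

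Next I would record that $\mathcal L_m=(-\Delta+m^2)$ with $m>0$, so $(\mathcal L_m)^{\alpha/2}=(m^2-\Delta)^{\alpha/2}$, which after rescaling $x\mapsto x/m$ is precisely a Bessel potential operator $(1-\Delta)^{\beta/2}$ with $\beta=\alpha$. Thus the pair of vanishing conditions
$$
(1-\Delta)^{\alpha/2}w(\cdot,t_1)=0,\qquad (1-\Delta)^{0}w(\cdot,t_1)=w(\cdot,t_1)=0\quad\text{in}\ \mathcal D'(D),
$$
is of the form covered by Theorem \ref{afterGSU} with $(\alpha_1,\alpha_2)=(\alpha,0)$, and the hypothesis $\alpha_1-\alpha_2=\alpha\notin 2\mathbb Z$ is guaranteed by the assumption $\alpha\in(-2,2)-\{0\}$. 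Theorem \ref{afterGSU} then forces $w(\cdot,t_1)\equiv 0$ on all of $\mathbb R^n$.

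Finally I would propagate this in time: since $w$ solves a \emph{linear} dispersive equation of GFDS type with coefficients bounded on $[0,T]$ and $w(\cdot,t_1)=0$, the local well-posedness/uniqueness theory for the linear IVP (assumed available in $H^s$ for $s>s^*$) gives $w\equiv 0$ on $\mathbb R^n\times[0,T]$; the ``in particular'' statement about an open set $\Omega$ follows by picking any $t_1$ with $D=\{x:(x,t_1)\in\Omega\}$ nonempty and open and observing that (i) of \eqref{hyp-1} holds there while (ii) holds because $u_1-u_2$ vanishes on the open set $\Omega$ and hence so does its time derivative at $t_1$. The main obstacle I anticipate is purely bookkeeping rather than conceptual: one must check that differentiating the equations in $t$ is legitimate given only $C^1((0,T):H^{s-\alpha})$ regularity, and that every nonlinear contribution to $\partial_t w(\cdot,t_1)-(\mathcal L_m)^{\alpha/2}w(\cdot,t_1)$ really is a \emph{bounded-coefficient multiple of $w$ or $\bar w$} on $D$ (so that it vanishes there); the Hartree convolution $W*F(|u|)$ in part (2) is the delicate case, which is exactly why one must restrict to $u_2\equiv 0$ there — the difference $W*(F(|u_1|)-F(|u_2|))$ is nonlocal and need not vanish on $D$ otherwise.
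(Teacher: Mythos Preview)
Your proposal is correct and follows essentially the same route as the paper: reduce to the stationary unique continuation Theorem \ref{afterGSU} for Bessel potentials by observing that the hypothesis \eqref{hyp-1} together with the equation forces $w(\cdot,t_1)=0$ and $(\mathcal L_m)^{\alpha/2}w(\cdot,t_1)=0$ in $\mathcal D'(D)$, then invoke $(\alpha_1,\alpha_2)=(\alpha,0)$ with $\alpha\notin 2\mathbb Z$. One cosmetic simplification: for the time propagation you do not need to recast the problem as a linear IVP for $w$; once $w(\cdot,t_1)\equiv 0$ on $\mathbb R^n$ you have $u_1(\cdot,t_1)=u_2(\cdot,t_1)$, and the assumed well-posedness of the \emph{nonlinear} IVP in $H^s$, $s>s^*$, gives $u_1\equiv u_2$ directly.
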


Next, we consider the case $m=0$:

\begin{theorem}[\cite{KPPV}] \label{kppv2}\hskip10pt
\begin{enumerate}
\item Let $\,\alpha\in (0,2)$ and $m=0$. Let $\,u_1,\,u_2$ be two solutions of the IVP associated to the equation  \eqref{fNLS} with $W\equiv 0$ such that
\begin{equation}
\label{class2m}
u_1,\,u_2\in C([0,T]:H^{s}(\mathbb R^n))\cap C^1([0,T]:H^{s-\alpha}(\mathbb R^n)),
\end{equation}
with $s>\max\{\alpha; n/2\}$. If there exist an open set $D\subset \mathbb R^n$ and $t_1\in[0,T]$ such that
\begin{equation}
\label {hyp-2}
\begin{cases}
\begin{aligned}
u_1(x,t_1)&=u_2(x,t_1),\;\;\;\;\;\;\;\;x\in D,\\
\partial_tu_1(x,t_1)&=\partial_tu_2(x,t_1),\;\;\;\;\;x\in D,
\end{aligned}
\end{cases}
\end{equation}
then
$$
u_1(x,t_1)=u_2(x,t_1),\;\;\;\;(x,t)\in\mathbb R^n\times [0,T].
$$
In particular, if $u_1(x,t)=u_2(x,t)$ for $(x,t)\in\Omega \subset\mathbb R^n\times[0,T]$ with $\Omega$ open, then $u_1\equiv u_2$.

\item For the general form of  the equation \eqref{fNLS}, i.e. $W\not\equiv 0$, the result in (1) still holds if one assumes that $u_2(x,t)=0$ for $(x,t)\in \mathbb R^n\times [0,T]$.
\end{enumerate}
\end{theorem}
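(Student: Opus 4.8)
\medskip
\noindent\textbf{Proof proposal.} Since $m=0$, the dispersive operator in \eqref{fNLS} is exactly $(-\Delta)^{\alpha/2}$ with $\alpha/2\in(0,1)\subset(0,2)$, so the plan is to run the argument used for Theorem \ref{kppv} (itself a variant of the proof of Theorem \ref{BOloc}) and to close it by invoking Theorem \ref{GSU} with exponent $a=\alpha/2$. This is precisely the reason the case $m=0$ is stated apart from the case $m>0$ in Theorem \ref{kppv1}: when $m>0$ one has instead $(\mathcal L_m)^{\alpha/2}=(1-\Delta)^{\alpha/2}$ up to scaling and one must appeal to Theorem \ref{afterGSU}.

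First I would put $w=u_1-u_2$ in part (1) (respectively $w=u_1$ in part (2)), so that, subtracting the two copies of the equation, $w$ satisfies
\[
i\partial_t w+(-\Delta)^{\alpha/2}w+\big(Vu_1-Vu_2\big)+\big(P(u_1,\bar u_1)-P(u_2,\bar u_2)\big)=0
\]
on $\mathbb R^n\times[0,T]$, with the extra Hartree term $\big(W\ast F(|u_1|)\big)u_1$ retained in part (2). Because $s>n/2$, the functions $u_1,u_2$ are continuous, so the hypothesis $u_1(\cdot,t_1)=u_2(\cdot,t_1)$ on $D$ has literal pointwise meaning; evaluating the displayed equation at $t=t_1$ and restricting to $D$, the potential difference $V(u_1-u_2)(\cdot,t_1)$ and the nonlinearity difference $P(u_1,\bar u_1)(\cdot,t_1)-P(u_2,\bar u_2)(\cdot,t_1)$ both vanish on $D$, the Hartree term in part (2) vanishes on $D$ as well since it is multiplied by $u_1(\cdot,t_1)\equiv0$ there, and $\partial_t w(\cdot,t_1)\equiv0$ on $D$ by assumption. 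Hence
\[
(-\Delta)^{\alpha/2}w(\cdot,t_1)=0\quad\text{in }\mathcal D'(D),\qquad\text{together with}\qquad w(\cdot,t_1)=0\ \text{in }D.
\]

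At this point $w(\cdot,t_1)\in H^s(\mathbb R^n)$ meets the hypotheses of Theorem \ref{GSU}, which forces $w(\cdot,t_1)\equiv0$ on $\mathbb R^n$; uniqueness for the IVP associated to \eqref{fNLS} in the class \eqref{class2m} (run from $t_1$ in both time directions inside $[0,T]$) then gives $w\equiv0$ on $\mathbb R^n\times[0,T]$, i.e.\ $u_1\equiv u_2$. The ``open set'' version follows by choosing $t_1$ and $D$ with $D\times\{t_1\}\subset\Omega$.

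The substantive analytic content is already packaged in Theorem \ref{GSU}, so the only delicate step is the reduction to its hypotheses: verifying that every term other than $(-\Delta)^{\alpha/2}w$ vanishes on $D$ at $t=t_1$. For the potential and the nonlinearity this is immediate from their pointwise (local) character once $w(\cdot,t_1)$ vanishes on $D$; for the Hartree convolution it genuinely fails for two arbitrary solutions, since the nonlocal factor $W\ast F(|u_1|)$ need not agree with $W\ast F(|u_2|)$ on $D$, which is exactly why part (2) must assume $u_2\equiv0$. The remaining ingredients are routine: the threshold $s>\max\{\alpha;n/2\}$ is used only to have $H^s\hookrightarrow L^\infty$ (so the pointwise hypotheses and the vanishing on $D$ make sense) and to keep $(-\Delta)^{\alpha/2}w(\cdot,t_1)$ and the nonlinear terms in $H^{s-\alpha}\subset\mathcal D'$, so that the localized identity above is legitimate.
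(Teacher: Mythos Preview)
Your proposal is correct and follows exactly the route the paper indicates: the remark immediately after Theorem \ref{kppv2} states that its proof is based on Theorem \ref{GSU}, and your argument---subtract the equations, observe that all local terms vanish on $D$ at $t_1$, deduce $(-\Delta)^{\alpha/2}w(\cdot,t_1)=w(\cdot,t_1)=0$ in $\mathcal D'(D)$, apply Theorem \ref{GSU}, and then invoke well-posedness---is precisely that strategy, including the correct identification of why the Hartree term forces $u_2\equiv 0$ in part (2).
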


\begin{remark}\hskip10pt

\begin{enumerate}

\item In the case  $V\equiv W\equiv 0$ the appropriate versions of Theorem \ref{BOlocg}, Theorem \ref{ILWlocg}, and Theorem \ref{dgBOlocgg} still hold for the IVP associated 
to the equation \eqref{fNLS}.

\item The proofs of Theorem \ref{kppv1} and Theorem \ref{kppv2} are based on the results in Theorem \ref{afterGSU} and Theorem \ref{GSU}, respectively (see \cite{KPPV}).

\end{enumerate}
\end{remark}

Finally, we briefly discuss the asymptotic UCP for solution of the IVP associated to the equation \eqref{fNLS}. This question is largely \underline{open}, even in the case when the operator modeling the dispersion $\mathcal L_m$ relation is local, i.e. $\alpha \in 2\mathbb Z^+$ and $m\geq 0$. 

Let us consider some particular cases. First, we fix $\alpha=2$ and $m=0$, so $(\mathcal L_m)^{\alpha/2}=-\Delta$. In this setting, for the associated linear problem 
with $V\equiv W\equiv P\equiv 0$ the asymptotic UCP can be re-phrased in terms of uncertainty principles for the Fourier transform. For example, the $L^2$ version of Hardy uncertainty principle \cite{Har} found in \cite{CoPr} can be stated as:
$$
\text{If}\;f\,e^{|x|^2/\beta^2},\,\widehat{f} \,e^{|x|^2/\alpha^2}\in L^2(\mathbb R^n)\;\;\text{and}\;\;1/\alpha\beta\geq 1/4\Rightarrow f\equiv 0.
$$

This result can be reformulated in terms of the free Schr\"odinger group $\{e^{it\Delta}\,:\,t\in\mathbb R\}$ : 
\begin{equation}
\label{hardy}
\text{If}\,f e^{|x|^2/\beta^2},\,e^{iT\Delta}f\,e^{|x|^2/\alpha^2}\in L^2(\mathbb R^n)\;\text{and}\;T/\alpha\beta\geq 1/4\Rightarrow f\equiv 0.
\end{equation}
We observe that in this case the weight at time $t=0$ and $t=T$ may be different. This is a general fact exploited in several works, see \cite{EKPV12} and references therein.

In \cite{EKPV10}, the result in \eqref{hardy}, except for the case $T/\alpha \beta=1/4$, was enlarged for solutions $u(x,t)$ of the equation
\begin{equation}
\label{pot-case}
\partial_tu=i(\Delta u+V(x,t)u),\;\;\;\;\;\;(x,t)\in\mathbb R^n\times[0,T],
\end{equation}
with the complex valued potential $V=V(x,t)$ satisfying the decay assumption
\begin{equation}
\label{decay-p}
\lim_{R\uparrow \infty} \| V\|_{L^1([0,T]:L^{\infty}(\mathbb R^n-B_R(0)))}=0.
\end{equation}
Moreover, in \cite{EKPV10} an example of a complex valued potential $V(x,t)$ verifying \eqref{pot-case} for which the corresponding solution $u(x,t)\not\equiv 0$ holds that
$$
u(x,0)\,e^{|x|^2/\beta^2},\,u(x,T)\,e^{|x|^2/\alpha^2}\in L^2(\mathbb R^n)\;\;\text{with}\;\;T/\alpha\beta= 1/4,
$$
was given. This affirms that the result is sharp.

A similar result for real valued potential $V(x,t)$ is \underline{open}.

The result in \cite{EKPV10} commented above extends to the nonlinear equation
\begin{equation}
\label{NLS}
\partial_tu=i\Delta u+iV(x,t)u +iP(u,\overline{u}),\;\;\;\;\;\;(x,t)\in\mathbb R^n\times[0,T],
\end{equation}
with the potential $V$ satisfying \eqref{decay-p},  $ P :\mathbb C^2\to \mathbb C$ being a smooth function with 
$$
P(0,0)=\partial_zP(0,0)=\partial_{\overline{z}}P(0,0)=0,
$$
and $u_1, \,u_2 $ are regular enough solutions of the equation \eqref{NLS} such that 
$$
(u_1-u_2)(x,0)e^{|x|^2/\beta^2},\,(u_1-u_2)(x,T)e^{|x|^2/\alpha^2}\in L^2(\mathbb R^n),\,\,\,T/\alpha\beta> 1/4,
$$
then $u_1\equiv u_2$. 

However, this result may not be optimal. Consider the case of \eqref{NLS} with $V\equiv 0$ and nonlinearity $ P(z,\overline{z})= |z|^{\alpha-1}z$ and $\alpha\in (1,\infty)$. Assuming $u_2\equiv 0$ one may ask the question : what is the  strongest possible decay at infinity of  a non-trivial solution $u_1(x,t)$ of 
\begin{equation}
\label{NLS1}
\partial_tu=i(\Delta u+ |u|^{\alpha-1}u),\, 
\end{equation}
at the time $t=0$ and $t=1$? The answer is  \underline{unknown}.

On one hand, the result commented above, for $\alpha $ odd or sufficiently large, shows that 
\begin{equation}
\label{100}
\text{if}\;\;u_1(x,0)\,e^{|x|^2/4},\,u_1(x,1)\,e^{|x|^2/4}\in L^2(\mathbb R^n)\;\;\;\text{then}\;\;\;u_1\equiv 0.
\end{equation}

On the other hand, one has that the ground state solution 
$$
u_1(x,t)=e^{i t}\varphi(x)
$$  
of \eqref{NLS1}  where $\varphi\in H^1(\mathbb R^n)$ is the unique (up to symmetries) radial positive solution of the elliptic problem
$$
-\Delta \varphi+\varphi=|\varphi|^{\alpha-1}\varphi
$$
with $\varphi$ having exponential decay at infinity, i.e. 
\begin{equation}
\label{decayNLS}
\varphi(x)=O(e^{-a|x|}), \;\;\;\;\text{as}\;\;\;\;\; |x|\uparrow \infty,\;\;\;a\in\mathbb R,
\end{equation}
 see \cite{Str} and \cite{BLi}. 

We \underline{do not know} a result as that in \eqref{100} with a weaker weight that the Gaussian $e^{|x|^2/4}$ at time $t=0$ and $t=1$. Also, we  \underline{do not know} a solution of the equation \eqref{NLS1} which has a stronger decay that $u_1(x,t)=e^{i t}\varphi(x)$ for $t\in [0,1]$, see \eqref{decayNLS}.

Returning to the general equation in \eqref{fNLS} one observes that for $m=0$ the symbol of the dispersive operator is non-smooth. Therefore, from the discussed results of the BO equation one expect the strongest possible decay of the solution to be weaker than that for the case $m>0$ where the symbol is smooth. Also, the form of the Hartree integrand should play an essential role in the strongest possible decay of the solution.

Finally, we have some comments concerning the product rule for fractional derivatives. Theorem \ref{GSU} tells us that the pointwise inequality involving the fractional Riesz potential $D=(-\Delta)^{1/2}$:
\begin{equation}
\label{LR1}
|D^a(fg)(x)| \leq c\,\big(|(f D^a g)(x)+|(g D^a f)(x)|\big) \;\;\;\text{a.e.}\;\;\mathbb R^n
\end{equation}
fails for any $a\in(0,2) $ and any constant $c>0$. To see this, one takes $f, g\in C^{\infty}_0(\mathbb R^n)$ with $|\supp(f)\cap  \supp(g)|>0$, to have the right hand side of \eqref{LR1} vanishing in $(\supp(f)\cup \supp(g))^c$ with the left hand side of \eqref{LR1} non-vanishing in any open set of 
$(\supp(f))^c\cup (\supp(g))^c$.

In this regard the following pointwise estimate was established in \cite{CoCo} :

\begin{theorem}
[\cite{CoCo}]\label{coco1}
Let $a\in (0,2)$ and any $f\in C^{\infty}_0(\mathbb R^n)$. Then the following inequality holds:  
\begin{equation}
\label{coco2}
D^a(f^2)(x) \leq 2 f(x) D^af(x).
\end{equation}

\end{theorem}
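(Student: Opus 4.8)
The plan is to use the explicit (symmetrized) singular integral representation of the fractional Laplacian. Since $D=(-\Delta)^{1/2}$, for $a\in(0,2)$ the operator $D^a=(-\Delta)^{a/2}$ has order $a/2\in(0,1)$, and for every real valued $g\in C_0^\infty(\R^n)$ one has
\begin{equation*}
D^a g(x)=\frac{c_{n,a}}{2}\int_{\R^n}\frac{2g(x)-g(x+z)-g(x-z)}{|z|^{n+a}}\,dz,\qquad c_{n,a}>0,
\end{equation*}
where, since $g\in C^2$, the numerator is $O(|z|^2)$ near the origin, so that the integral is absolutely convergent exactly in the range $a<2$ of our hypothesis. (Equivalently one may use the non-symmetrized principal value form $D^ag(x)=c_{n,a}\,{\rm p.v.}\!\int\frac{g(x)-g(y)}{|x-y|^{n+a}}\,dy$.)

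The heart of the matter is then a one-line algebraic identity. Inserting this representation for both $D^a(f^2)$ and $D^af$ and combining the two integrals, the numerator of the integrand of $2f(x)D^af(x)-D^a(f^2)(x)$ becomes a sum of squares:
\begin{multline*}
2f(x)\bigl(2f(x)-f(x+z)-f(x-z)\bigr)-\bigl(2f(x)^2-f(x+z)^2-f(x-z)^2\bigr)\\
=\bigl(f(x)-f(x+z)\bigr)^2+\bigl(f(x)-f(x-z)\bigr)^2\ \ge\ 0,
\end{multline*}
so that
\begin{equation*}
2f(x)D^af(x)-D^a(f^2)(x)=\frac{c_{n,a}}{2}\int_{\R^n}\frac{\bigl(f(x)-f(x+z)\bigr)^2+\bigl(f(x)-f(x-z)\bigr)^2}{|z|^{n+a}}\,dz\ \ge\ 0,
\end{equation*}
which is precisely \eqref{coco2}. (With the principal value form the same cancellation yields $c_{n,a}\int\frac{(f(x)-f(y))^2}{|x-y|^{n+a}}\,dy\ge0$; the only extra point there is the legitimacy of merging the two principal values, which follows by truncating to $|x-y|>\varepsilon$, subtracting the absolutely convergent truncated integrals, and letting $\varepsilon\downarrow0$.)

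Because the computation is so short, the only real decision is the choice of a convenient representation of $D^a$, and I do not expect a serious obstacle. As an alternative route — the one suggested by the material recalled after Theorem \ref{GSU} — one can argue through the Caffarelli--Silvestre extension: if $v$ denotes the extension of $f$ adapted to $D^a$, written in weighted divergence form as $L_a w=y^{a-1}{\rm div}\bigl(y^{1-a}\nabla w\bigr)$, then a direct computation gives $L_a(v^2)=2\,|\nabla v|^2\ge0$, so $v^2$ is $L_a$-subharmonic. Comparing $v^2$ with the $L_a$-harmonic extension $w$ of $f^2$, which carries the same trace on $\{y=0\}$ and the same decay at infinity, the maximum principle yields $v^2\le w$; and since $w-v^2\ge0$ vanishes on $\{y=0\}$, passing to the Dirichlet--to--Neumann limit converts this inequality into $D^a(f^2)\le 2fD^af$, again recovering \eqref{coco2}.
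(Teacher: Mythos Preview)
Your proof is correct. The paper itself does not supply a proof of this theorem: it merely quotes the result from \cite{CoCo} and moves on to its consequences, so there is nothing to compare against on the paper's side. Your primary argument --- the singular integral representation of $(-\Delta)^{a/2}$ together with the elementary identity
\[
2f(x)\bigl(f(x)-f(y)\bigr)-\bigl(f(x)^2-f(y)^2\bigr)=\bigl(f(x)-f(y)\bigr)^2\ge 0
\]
--- is exactly the original C\'ordoba--C\'ordoba computation, so you have reproduced the source proof. The alternative route you sketch via the Caffarelli--Silvestre extension is also valid (and is the style of argument used in the generalization \cite{CaSir} that the paper points to), but it is heavier machinery for what is, as you say, a one-line algebraic fact once the integral representation is written down.
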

For a generalization of Theorem \ref{coco1} we refer to \cite{CaSir}.

From Theorem \ref{GSU} and Theorem \ref{coco1} one has : if $\,f\in C^{\infty}_0(\mathbb R^n)$, then 
$$
D^a(f^2)(x) \leq 0\;\;\;\;\;\;\;\forall x \,\in (\supp(f))^c,
$$
with $D^a(f^2)$ non-vanishing in any open subset of $ (\supp(f))^c$.

\vskip.1in
However, one has the following estimate:
\begin{theorem}
\label{tlr}
Let $r\in [1,\infty]$  and $ p_1,p_2, q_1, q_2\in (1,\infty]$ with 
\begin{equation}
\label{LR4}
1/r=1/p_1+1/q_1=1/p_2+1/q_2.
\end{equation}
 Given $a>0$ there exists $c=c(n,a,r,p_1,p_2,q_1,q_2)>0$ such that for all $f, g\in \mathcal S(\mathbb R^n)$
one has
\begin{equation}
\label{LR2}
\| D^a(fg)\|_r\leq c\big(\|f\|_{p_1}\| D^ag\|_{q_1}+\|g\|_{p_2}\|D^af\|_{q_2}\big).
\end{equation}
\end{theorem}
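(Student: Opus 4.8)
The plan is to establish the fractional Leibniz (Kato--Ponce) inequality \eqref{LR2} by the standard Littlewood--Paley / paraproduct decomposition, which reduces the bilinear estimate to three pieces and then to known linear estimates (Bernstein, the boundedness of the Hardy--Littlewood maximal function, and the vector-valued Fefferman--Stein inequality). First I would fix a Littlewood--Paley partition of unity $1=\sum_{j\ge 0}\Delta_j$ (with $\Delta_0$ collecting low frequencies) and write the Bony paraproduct decomposition
\begin{equation*}
fg=\sum_j (S_{j-2}f)\,\Delta_j g+\sum_j \Delta_j f\,(S_{j-2}g)+\sum_{|j-k|\le 2}\Delta_j f\,\Delta_k g=:\Pi_1+\Pi_2+\Pi_3,
\end{equation*}
where $S_j=\sum_{l<j}\Delta_l$. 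By symmetry it suffices to bound $\Pi_1$ (which will produce the term $\|f\|_{p_1}\|D^a g\|_{q_1}$) and the ``high-high'' term $\Pi_3$.

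For $\Pi_1$ each summand has frequency support in an annulus $|\xi|\sim 2^j$, so applying $D^a$ and summing the almost-orthogonal pieces, $\|D^a\Pi_1\|_r\lesssim \big\|\big(\sum_j 2^{2aj}|S_{j-2}f\,\Delta_j g|^2\big)^{1/2}\big\|_r$. Then I would pull out $|S_{j-2}f|\le C\,Mf$ pointwise (maximal function), leaving $\big\|Mf\cdot\big(\sum_j|2^{aj}\Delta_j g|^2\big)^{1/2}\big\|_r$; Hölder with exponents $p_1,q_1$ followed by $L^{p_1}$-boundedness of $M$ and the Littlewood--Paley square-function characterization of $\|D^a g\|_{q_1}$ finishes this piece (for $q_1=\infty$ one uses an $L^\infty$ variant / embedding instead of the square function). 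For $\Pi_3$ one first notes that $D^a(\Delta_j f\,\Delta_k g)$ with $|j-k|\le2$ has frequency support in $|\xi|\lesssim 2^j$, so after applying $D^a$ one has a spectrally localized sum that can be summed in $\ell^2$ after peeling off an $a>0$ gain; here one splits $D^a$ onto $f$ or $g$ (say onto $f$) and estimates by $\big\|\big(\sum_j |2^{aj}\Delta_j f|^2\big)^{1/2}\,\big(\sum_k|\Delta_k g|^2\big)^{1/2}\big\|_r$, then Hölder and the two square-function estimates. The only subtlety in $\Pi_3$ is that the outer $\ell^2$ sum after applying $D^a$ needs the $2^{aj}$ factor to absorb the loss from summing overlapping frequency blocks, which is exactly where $a>0$ is used.

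The main obstacle I expect is the endpoint bookkeeping when some of $p_1,p_2,q_1,q_2$ equal $\infty$ (the hypotheses allow $q_i=\infty$, and $r$ may be $1$ or $\infty$): in those cases the square-function characterization of $\|D^a g\|_{q}$ and the maximal-function bound must be replaced by their $L^\infty$/BMO-flavored substitutes, and one must be a little careful that $r\ge 1$ so that the triangle inequality in the almost-orthogonality step is legitimate. A clean way to handle this uniformly is to work with the relevant Besov/Triebel--Lizorkin norms and cite that $\|D^a g\|_q \sim \|g\|_{\dot F^{a}_{q,2}}$ for $1<q<\infty$ with the obvious modifications at $q=\infty$; once the decomposition above is in place, each of the three pieces is controlled by a product of a Lebesgue norm and such a homogeneous Triebel--Lizorkin norm, and collecting them gives \eqref{LR2}. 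This argument does not use Theorem \ref{GSU} or Theorem \ref{coco1}; those are cited only to contrast the \emph{pointwise} failure of a Leibniz rule with the validity of the integrated version proved here.
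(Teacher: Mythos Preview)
The paper does not actually prove Theorem~\ref{tlr}: immediately after the statement it writes ``For the proof of Theorem~\ref{tlr} we refer to \cite{GO}. The case $r=p_1=p_2=q_1=q_2=\infty$ was established in \cite{BoLi}, see also \cite{GMN}. For earlier versions of this result we refer to \cite{KaPo} and \cite{KPV93}.'' So there is nothing to compare your argument against in the paper itself; the paper simply cites the literature.

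Your paraproduct sketch is essentially the standard route taken in those references (Coifman--Meyer, Kato--Ponce, Kenig--Ponce--Vega, Grafakos--Oh), and for the ``generic'' range $1<r,p_i,q_i<\infty$ it is correct as outlined. Two comments are worth making. First, your treatment of the diagonal piece $\Pi_3$ is a bit loose: the product $\Delta_j f\,\widetilde\Delta_j g$ is supported in a \emph{ball} of radius $\sim 2^j$, not an annulus, so after applying $\Delta_l$ one must sum over all $l\le j+C$; the geometric gain $2^{a(l-j)}$ (with $a>0$) is what makes that inner sum converge, and this should be written out rather than compressed into a single square-function line. Second, and more substantively, the endpoint cases that the theorem explicitly allows---some $p_i$ or $q_i$ equal to $\infty$, and $r\in\{1,\infty\}$---are not covered by the square-function/maximal-function machinery you invoke, since the Littlewood--Paley characterization $\|D^a g\|_q\sim\|(\sum_j 2^{2aj}|\Delta_j g|^2)^{1/2}\|_q$ fails at $q=\infty$. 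You flag this, but the fix is not merely ``use the $L^\infty$ variant'': the full $L^\infty$ endpoint ($r=p_1=p_2=q_1=q_2=\infty$) required a genuinely new argument and was only settled later in \cite{BoLi}. So your proposal is a faithful outline of the Grafakos--Oh proof in the interior range, but for the complete statement as written you would still need to import the separate endpoint arguments from \cite{GO} and \cite{BoLi}.
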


For the proof of Theorem \ref{tlr} we refer to \cite{GO}. The case $r=p_1=p_2=q_1=q_2=\infty$ was established in \cite{BoLi}, see also \cite{GMN}. For earlier versions of this result we refer to \cite{KaPo} and \cite{KPV93}.

\medskip

 From \eqref{LR1} and \eqref{LR2} it seems natural to \underline{ask whether or not} the following weaker estimate than \eqref{LR2} still holds:
 
Given $p\!\in\!\! [1,\infty]$ there exists $c\!=\!c(n,p)\!\!>\!0$ such that  for all $f, g\!\in\! \mathcal S(\mathbb R^n)$
\begin{equation}
\label{LR5}
\| D^a(fg)\|_p\leq c\big(\|f D^ag\|_{p}+\|g D^af\|_{p}\big).
\end{equation}

\vspace{10mm}

\end{document}